\documentclass[11pt, notitlepage]{amsart}
\usepackage{latexsym,amsfonts,amssymb,amsmath,amsthm}
\usepackage{graphicx}
\usepackage{color}
\usepackage{mathrsfs}
\usepackage{tabularx}
\usepackage{wrapfig}
\usepackage{stackrel}
\usepackage{color}
\usepackage{float}
\usepackage{enumitem}
\usepackage[hidelinks]{hyperref}
\setlist[enumerate]{leftmargin=1.5em,itemsep=2pt}
\setlist[itemize]{leftmargin=1.2em,itemsep=2pt}
\usepackage{mathtools}
\usepackage{multicol}
\usepackage[normalem]{ulem}
\usepackage{cancel}
\usepackage{graphicx}
\usepackage{multirow}
\graphicspath{ {./images/} }
\usepackage{stmaryrd}
\usepackage{xparse}
\usepackage{csquotes}
\usepackage[UKenglish]{babel}
\usepackage[hang]{footmisc}
\usepackage{subfiles}
\usepackage{tikz, tikz-cd, float} 
\usetikzlibrary{positioning}
\numberwithin{equation}{section}

\usepackage[margin=1in]{geometry}
\usepackage[backend=biber, isbn=false, doi=false, giveninits=true, style=alphabetic, useprefix=true, maxcitenames=4, maxbibnames=4, sorting=nyt,citestyle=alphabetic]{biblatex}

\DeclareFieldFormat[misc]{title}{\mkbibemph{#1}}
\DeclareFieldFormat[article]{title}{\mkbibemph{#1}}
\DeclareFieldFormat[incollection]{title}{\mkbibemph{#1}}

\newtheorem{thm}{Theorem}[section]
\newtheorem{prop}[thm]{Proposition}
\newtheorem{lemma}[thm]{Lemma}
\newtheorem{cor}[thm]{Corollary}

\theoremstyle{definition}
\newtheorem{defn}[thm]{Definition}
\newtheorem{rem}[thm]{Remark}

\newtheorem*{prop*}{Proposition}

\newcommand{\Alg}{\mathrm{Alg}}
\newcommand{\C}{\mathbb{C}}
\newcommand{\F}{\mathbb{F}}
\newcommand{\Q}{\mathbb{Q}}
\newcommand{\R}{\mathbb{R}}
\newcommand{\Z}{\mathbb{Z}}

\newcommand{\ev}{\mathrm{ev}}

\newcommand{\End}{\mathrm{End}}
\newcommand{\Iw}{\mathrm{Iw}}

\newcommand{\GL}{\mathrm{GL}}
\newcommand{\Gal}{\mathrm{Gal}}
\newcommand{\GSp}{\mathrm{GSp}}
\newcommand{\Hom}{\mathrm{Hom}}
\newcommand{\PGSp}{\mathrm{PGSp}}
\newcommand{\GSpin}{\mathrm{GSpin}}

\newcommand{\PGL}{\mathrm{PGL}}
\newcommand{\A}{\mathbb{A}}

\newcommand{\Art}{\mathrm{Art}}

\newcommand{\Frob}{\mathrm{Frob}}
\newcommand{\WD}{\mathrm{WD}}

\newcommand{\Ext}{\mathrm{Ext}}
\newcommand{\Dcris}{D_{\mathrm{cris}}}

\usepackage{geometry}

\title{$p$-adic rigidity for $\GSp_4$}
\author{Charlotte Clare-Hunt} 
\address{Mathematical Institute, University of Oxford, Woodstock Road, Oxford, OX2 6GG, UK}
\email{charlotte.clare-hunt@maths.ox.ac.uk}

\makeatletter
\patchcmd{\@setaddresses}{\nobreak \indent}{\nobreak \noindent}{}{}
\patchcmd{\@setaddresses}{\par \addvspace \bigskipamount \indent}{\par \addvspace \bigskipamount \noindent}{}{}
\patchcmd{\@setaddresses}{\indent \emailaddrname}{\noindent \emailaddrname}{}{}
\patchcmd{\@setaddresses}{\indent \urladdrname}{\noindent \urladdrname}{}{}
\makeatother

\begin{document}
\begin{abstract}
This paper establishes the $p$-adic rigidity of certain suitably refined noncuspidal automorphic Saito--Kurokawa representations of $\GSp_4(\mathbb{A}_\mathbb{Q})$, in the sense that they cannot be interpolated in a nontrivial positive dimensional $p$-adic family. The results provide a $\GSp_4$ analogue of Bella\"iche’s rigidity theorems for $\mathrm{U}(2,1)$ and identify an obstruction to $p$-adic variation on the $\GSp_4$ eigenvariety.
\end{abstract}

\maketitle

\tableofcontents
\section{Introduction}
\subsection{Eigenvarieties and $p$-adic families}

There has been substantial progress in the construction of eigenvarieties, $p$-adic rigid analytic spaces that interpolate systems of Hecke eigenvalues of certain algebraic automorphic representations, and the study of $p$-adic families of automorphic forms--see \cite{Eme11} for a survey. One of the main motivations for such constructions involves the relationship between $p$-adic families, Selmer groups and ($p$-adic) $L$-functions. In particular, families of Eisenstein series have played an important role in arithmetic applications, from Ribet's converse to Herbrand's theorem to Iwasawa theory \cite{MW84}. The geometry of the noncuspidal, nonholomorphic locus remains lesser-understood, motivating the study of the noncuspidal Saito--Kurokawa lifts of Schmidt \cite{Sch05}, which are noncuspidal only in the nonholomorphic case. 

\subsection{$p$-adic rigidity}
Antipodal to the construction of $p$-adic deformations is complete $p$-adic rigidity---the nonexistence of nontrivial positive-dimensional $p$-adic families through a given point. To prove rigidity for the $p$-refined representations that we study, in a framework that does not rely on the existence of an eigenvariety, we abstract the properties that a $p$-adic family for the symplectic group $\GSp_4$ is expected to satisfy, adapting Bella\"iche's definition for the quasisplit unitary group $\mathrm{U}(2,1)$ in \cite{Bel10}.

Let $\Pi$ be an automorphic representation of $\GSp_4(\A_\Q)$, unramified at a rational prime $p$, and of \emph{Galois type} (Definition~\ref{def: aut rep of Galois type}), and let $T$ be the torus of the upper triangular Borel of $\GSp_4$. Attached to a $p$-adic family should be a set of classical points, each consisting of a pair $(\Pi, \psi_{\mathcal{R}})$, where $\psi_\mathcal{R}$ is an \emph{accessible refinement} of $\Pi_p$; a character $\psi_\mathcal{R}:T(\mathbb{Q}_p) \to \C^\times$ occurring in the normalised Jacquet module of $\Pi_p$ and which is trivial on $T(\Z_p)$. Such refinements may be identified with the crystalline Frobenius eigenvalues of the associated $p$-adic Galois representation $\rho_{\Pi}$, cf. Definition~\ref{def: refinement-local} and Proposition~\ref{prop:Fi}. An unramified Saito--Kurokawa local factor usually has four distinct accessible refinements $\psi_1, \ldots, \psi_4$ (cf. Remark~\ref{rem: eval conj}). We work with $p$-adic families with a Zariski dense, self-accumulating subset of classical points whose $\GSp_4(\A_\Q)$-representations have fixed central character, which we may (without loss of generality) take to be trivial.

\subsection{Saito--Kurokawa points}
We study $p$-adic families through a special class of \emph{Saito--Kurokawa (SK) points}, namely those for which the automorphic $\GSp_4(\A_\Q)$-representation $\pi$ is the lift of a $\PGSp_4(\A_\Q)$ Saito--Kurokawa representation of Schmidt \cite{Sch05}. Attached to $\pi$ is a $\GL_2(\A_\Q)$-representation $\mu$ that corresponds to a weight $2k(z_0)-2$ holomorphic newform $f_{z_0}$ with $k(z_0) \in \Z_{\ge 2}$ and $p \nmid \mathrm{cond}(\mu)$. Arthur’s classification \cite{Art04} of discrete automorphic representations of $\GSp_4$ plays, for these points, the same structural role that Rogawski’s classification \cite{Rog90} plays in Bella\"iche’s rigidity for $\mathrm{U}(2,1)$ \cite{Bel10}. An auxiliary `SK rigidity' result for families with a Zariski dense set of SK points, analogous to Bella\"iche’s rigidity in cohomological degree one, highlights the significance of the refinement for the rigidity of SK points (we expect that $\psi_1$- and $\psi_4$-refined points do deform in such families, cf.~Remark \ref{rem: semiordinary family}). This is one reason why our main theorem applies only to $\psi_2$- and $\psi_3$-refined SK points $z_0$, whose refinements are analogous to Bella\"iche's antiordinary ones. For $\psi_3$ we further assume $v(\alpha(z_0))\neq k(z_0)-2$, where $v(\alpha(z_0))$ is the $U_p$-slope of $f_{z_0}$, cf. Proposition~\ref{prop:ref 2/3}. For complete rigidity we further assume that $z_0$ is noncuspidal and satisfies 
\begin{enumerate}[leftmargin = 2em]
    \item[(St)] for every prime $\ell\neq p$ such that $\mu_{\ell}$ is an unramified twist of Steinberg, $\mu_\ell \simeq \mathrm{St} \otimes \xi$, where $\xi$ is the unramified character with $\xi(\ell) = -1$.
\end{enumerate}
We require two further inputs. The first is the \emph{Kisin property} at $z_0$, an interpolation property for crystalline Frobenius eigenvalues that is expected to hold for eigenvarieties and, more generally, for $p$-adic families with a sufficiently dense set of classical points. The second is a monodromy control condition \textup{(SK--P2)}, which encodes the family‑level Galois monodromy away from $p$. 
\begin{thm}[Theorem~\ref{thm: X is a point}]
\label{thm: complete rigidity}
Let $z_0$ be a noncuspidal $\psi_i$-refined SK point satisfying \textup{(St)} with $i\in\{2,3\}$ and $v(\alpha(z_0))\neq k(z_0)-2$ when $i=3$. Let $X(1)$ be an irreducible $p$-adic family interpolating automorphic representations of $\GSp_4(\A_\Q)$ of Galois type and level $K$ with trivial central character through $z_0$ satisfying condition \textup{(SK--P2)} and the Kisin property at $z_0$. Then $X(1)$ is a point.
\end{thm}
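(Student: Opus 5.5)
The argument follows Bellaïche's strategy for $\mathrm{U}(2,1)$: first show that $X(1)$ cannot contain a Zariski dense set of SK points away from $z_0$, then show that every non-SK classical point near $z_0$ gives a pseudo-character whose reducibility locus is too small, and conclude that $X(1)$ is zero-dimensional.

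\textbf{Step 1: pseudo-character and its reducibility locus.} On the normalisation of $X(1)$ I interpolate the traces of $\rho_\Pi$ at classical points. This gives a continuous $4$-dimensional pseudo-character $T:G_{\Q,S}\to\mathcal{O}(X(1))$, with symplectic similitude trivial up to cyclotomic twist because the central character is trivial. At $z_0$ the Arthur parameter of an SK point gives
\[
\rho_{z_0}^{ss}\simeq \rho_{f_{z_0}}\oplus \chi\epsilon^{k(z_0)-1}\oplus\chi\epsilon^{k(z_0)-2}
\]
for a quadratic character $\chi$ attached to the noncuspidal lift. Condition (St) pins down the sign $\chi$ and rules out the local configurations at Steinberg primes that would let the characters move. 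By (SK--P2), the family-level monodromy away from $p$ keeps the shape of the decomposition constant on a neighbourhood. So if the reducible locus of $T$ near $z_0$ is all of $X(1)$, then every point carries the same type of decomposition, $2\oplus1\oplus1$. The two $1$-dimensional pieces are characters of $G_\Q$ unramified outside $S$ and Hodge–Tate with fixed weights. They are therefore rigid, which forces $k$ to be constant, and with the refinement fixed the family is locally constant. This is the "SK rigidity" step, and I would invoke the auxiliary SK rigidity result stated earlier for $\psi_2,\psi_3$.

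\textbf{Step 2: the irreducible case and the Kisin property.} Suppose instead that the generic point is irreducible. Following Bellaïche–Chenevier, I construct a nontrivial extension using lattices (Ribet's lemma for pseudo-characters over the local ring at $z_0$). This produces a nonzero class in some $\Ext^1$ among the three constituents, for instance $H^1_f(\Q,\chi\epsilon^{\pm1})$, $H^1(\Q,\rho_f(\ldots))$, or an extension between $\rho_f$ and a character. The Kisin property at $z_0$ says the eigenvalue $\psi_i$ is interpolated as a crystalline Frobenius eigenvalue, so this extension can be made crystalline, or at least upper triangular at $p$, in a prescribed position. For $i=2,3$ the refinement puts the relevant character in the "antiordinary" slot, so the extension class lies in a Bloch–Kato Selmer group.
- The characters $\epsilon^{\pm1}$ have vanishing $H^1_f$.
- $H^1_f(\Q,\rho_f(k-1))$ vanishes because the $L$-value is at a point of absolute convergence or its dual.
- The hypothesis $v(\alpha(z_0))\ne k(z_0)-2$ for $\psi_3$ ensures that the $p$-adic local condition is genuinely crystalline rather than degenerating into a critical-slope case.

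All these Selmer groups vanish, which is a contradiction.

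\textbf{Step 3: conclusion and main obstacle.} From Steps 1 and 2, $z_0$ is an isolated point of $X(1)$. Since $X(1)$ is irreducible, it is a point.

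I expect the main obstacle to be the precise Selmer-group bookkeeping in Step 2. This requires matching the positions of the Frobenius eigenvalues $\psi_2,\psi_3$ in the triangulation against the constituent orderings, so that each possible extension lands in a Selmer group known to vanish. The first thing I would try is a case-by-case analysis of the six possible orderings of $\{\alpha,\beta,p^{k-1},p^{k-2}\}$ permitted by $\psi_i$.
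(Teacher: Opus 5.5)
Your outline has the right architecture: SK rigidity to exclude generic $(2,1,1)$-reducibility, then an extension class landing in a vanishing Bloch--Kato Selmer group. However, the steps that carry the weight are missing or wrongly justified.

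\textbf{The Selmer vanishing and the local conditions away from $p$.} The Selmer vanishing is not soft. $\rho_f(k-1)=\rho_\mu(2)$ is the central twist of a weight $2k-2$ form, so the relevant value is the central value $L(k-1,f)=L(1/2,\mu)$, not a value in (or dual to) the region of absolute convergence. Its vanishing is Kato's theorem under the hypothesis $L(1/2,\mu)\neq 0$, and that nonvanishing is exactly what noncuspidality of $z_0$ supplies. Your proposal never uses noncuspidality, which should be a warning sign since cuspidal points are never rigid.

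More seriously, you never show the extension is unramified at $\ell\in S\setminus\{p\}$, and this cannot be skipped. After twisting by $\epsilon^2$, $\Ext(\epsilon^{-2},\epsilon^{-1})$ becomes $H^1(G_{\Q,S},\Q_p(1))$. Its subspace of classes crystalline at $p$ is spanned by the Kummer classes of the primes $\ell\in S\setminus\{p\}$, all of which are ramified. Moreover, at a Steinberg prime $\rho_\mu|_{G_{\Q_\ell}}$ is itself a ramified extension of an unramified twist of $\epsilon^{-2}$ by the same twist of $\epsilon^{-1}$.

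This is where (SK--P2) and (St) actually enter. (SK--P2) bounds the monodromy of the generic representation $\rho_K$, the bound being propagated from $Z$ by continuity of monodromy in families. (St) makes the local extension groups between $\rho_\mu|_{G_{\Q_\ell}}$ and $\epsilon^{-1},\epsilon^{-2}$ vanish (Lemma~\ref{lem:no-epsd}). This gives idempotents with $e_{\epsilon^{-2}}+e_{\epsilon^{-1}}$ central in $R_\ell$, so the rank-one monodromy is forced into the $\rho_\mu$-block (Proposition~\ref{prop:trivial-mon}). Neither hypothesis ``pins down a sign'' or keeps the decomposition type constant.

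\textbf{Producing the extension, and Step~1.} Your dichotomy ``generically $(2,1,1)$ versus generically irreducible'' omits the generic types $(3,1)$ and $(2,2)$. Ribet's lemma alone also does not control the \emph{direction} of the extension, and direction matters here. The Kisin property at $z_0$ concerns the single eigenvalue $F_1(z_0)p^{\kappa_1(z_0)}=p^2$ (the same for $\psi_2$ and $\psi_3$), i.e.\ the constituent $\epsilon^{-2}$. What it yields is crystallinity of extensions with \emph{quotient} $\epsilon^{-2}$ (Proposition~\ref{prop: nontriv subquots}).

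The paper instead works with the GMA $R=A[G_{\Q,S}]/\ker T$ over $A=\mathcal{O}_{z_0}$. It uses the symmetry $T=T^\vee\otimes\epsilon^{-3}$, which swaps $\epsilon^{-1}$ and $\epsilon^{-2}$, together with \cite[Lem.~8.3.2]{BC09}. If $\Ext_T(\epsilon^{-2},\epsilon^{-1})$ and $\Ext_T(\epsilon^{-2},\rho_\mu)$ both vanish, then the total reducibility ideal vanishes and $T$ is $(2,1,1)$-reducible. This covers every non-$(2,1,1)$ generic type at once and lands exactly in the directions where crystallinity is available.

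Finally, invoking SK rigidity in Step~1 is right, but your justification is not. Characters of $G_{\Q,S}$ move over weight space. Even constant characters $\epsilon^{-1},\epsilon^{-2}$ do not force $k$ to be constant: the expected semi-ordinary families through $\psi_1$-refined points have exactly this shape. The actual mechanism is local constancy of $v(F_1),v(F_2)$ compared against Table~\eqref{tab: 1}. This, not the local condition at $p$, is where $v(\alpha(z_0))\neq k(z_0)-2$ is used.

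To get SK points accumulating at $z_0$ at all, you also need Arthur's classification. In a generically $(2,1,1)$-reducible family the classical points are SK or of type (e)/(f). If SK points do not accumulate at $z_0$, then type (e)/(f) points accumulate somewhere. That forces $T_\eta$, and hence $T_{z_0}$, to be $(1,1,1,1)$-reducible, which is absurd.
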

\noindent
\begin{rem}\label{rem: noncuspidal motivation short}
A motivation for the noncuspidality of $z_0$ is that cuspidal points are never rigid \cite[Theorem 1.4.3]{BP21}: indeed, holomorphic SK points do deform in positive-dimensional $p$-adic families \cite{BB22, SU06}.
\end{rem}

\subsection{Method of proof}
The strategy is as follows. Suppose that $\dim X > 0$. Via the SK rigidity (\S \ref{s: Rigidity for families of SK points}) and reducibility results on pseudocharacters (\S \ref{s:Pseudocharacters and the form of $T$}), we show that the generic pseudocharacter $T_\eta: G_{\Q, S} \to \mathrm{Frac}(\mathcal{O}(X))$ attached to $X(1)$ is not $(2,1,1)$-reducible (Proposition~\ref{prop: T-not-(2,1,1)}). The GMA machinery of \cite{BC09} gives a nonsplit extension of Galois representations in one of two groups of extensions $\mathrm{Ext}_T(\epsilon^{-2},\epsilon^{-1})$ or $\mathrm{Ext}_T(\epsilon^{-2},\rho_\mu)$ attached to the pseudocharacter $T:G_{\Q, S} \to \mathcal{O}_{z_0}$. Such an extension is crystalline at $p$ (by the Kisin property) and unramified outside $p$ (by \textup{(SK--P2)}, nongenericity of $z_0$, and \textup{(St)}), so gives rise to a nontrivial class in one of two Bloch--Kato Selmer groups $H_f^1(\Q,\rho_\mu(2))$ or $H_f^1(\Q, \overline{\Q_p}(1))$ that vanish (the former since $L(1/2, \mu) \neq 0$, the latter by Kummer theory), a contradiction. 

\subsection{Structure of the paper}
In \S \ref{s: Preliminaries} we set up some preliminaries for representations of $\GSp_4$. In \S \ref{s: Local representation theory} we compute the accessible refinements for (unramified at $p$) Saito--Kurokawa representations. In \S \ref{s: The Hecke algebra and Hecke operators} we recall the relationship between refinements and Hecke eigenvalues. In \S \ref{s: family} we define $p$-adic families. In \S \ref{s: Rigidity for families of SK points} we prove the auxiliary Saito--Kurokawa rigidity results.
In \S \ref{s:Pseudocharacters and the form of $T$} we analyse the pseudocharacter attached to a $p$-adic family and its reducibility. Finally in \S \ref{s:tot-rig} we prove Theorem~\ref{thm: complete rigidity}.

\subsection{Acknowledgments}
I would like to thank my supervisor James Newton for the inspiration for this problem and many invaluable discussions. I am also grateful to Andrew Graham and Hanneke Wiersema for their helpful comments and suggestions, and to Alex Horawa and Zach Feng for numerous useful conversations. This work was supported by  an EPSRC postgraduate studentship.
\section{Preliminaries} \label{s: Preliminaries}

\subsection{Lie theory for $\GSp_4$} \label{s: lie theory}
Following \cite{Box+21}, we define $\GSp_4$ to be the reductive group over $\mathbb{Z}$ defined as a subgroup of $\GL_4$, whose $R$-points are 
\begin{equation*}
    \GSp_4
(R) = \{g \in \GL_4(R) : gJg^t = \nu(g)J\},
\end{equation*}
where $R$ is a commutative ring, the map $\nu : \GSp_4 \to \mathbb{G}_m, \  g \mapsto \nu(g)$ is the similitude character, and $J$ is the antisymmetric matrix
\begin{equation*}
    J= \begin{pmatrix}
        0 & s \\ -s & 0
    \end{pmatrix}, \qquad \ s = \begin{pmatrix}
    0 & 1 \\ 1 & 0
\end{pmatrix}. 
\end{equation*}
Denote by $B \subset G = \GSp_4$ the Borel subgroup of upper triangular matrices, $T \subset B$ the diagonal maximal torus, and $Z_G=\{\mathrm{diag}(z, z, z, z) : z \in \mathbb{G}_m\}$ the centre of $G$. Write $X ^{*}(T)$ and $X_{*}(T)$ for the group of characters and cocharacters of $T$ respectively. We identify $X^{*}(T)$ with the set of triples $(a, b; c) \in \mathbb{Z}^3$ such that $c \equiv a + b \pmod 2$ via 
\begin{equation*}
    \lambda: t=\mathrm{diag}(t_1, t_2, \nu t_2^{-1}, \nu t_1^{-1}) \mapsto t_1^a t_2^b \nu^{(c-a-b)/2}.
\end{equation*}
In particular, the central character is given by $\lambda(\mathrm{diag}(z, z, z, z)) = z^c$ and the similitude character restricts to $(0, 0; 2)$ on $T$. The simple roots are $\alpha_1 = (1, -1; 0)$ and $\alpha_2 = (0, 2; 0)$, with $\alpha_1$ short and $\alpha_2$ long. The Weyl group $W_G = N_G(T)/T$ of $(G, T)$ acts on $X^*(T)$ by left conjugation $(\lambda\cdot w)(t) = \lambda(wtw^{-1})$. It is generated by the reflections
\begin{equation*}
    s_1=\begin{pmatrix}
        s & 0_2 \\ 0_2 & s
    \end{pmatrix}, \qquad
    s_2 = \begin{pmatrix}
        1 & & \\
        & s' & \\
        & & 1
    \end{pmatrix}, \qquad
    s' = \begin{pmatrix}
        0 & 1 \\ -1 & 0
    \end{pmatrix},
\end{equation*}
and admits the presentation $W_G = \langle s_1, s_2 \mid s_1^2=s_2^2=(s_1s_2)^4 = 1 \rangle$.
The root datum $(G,B,T)$ determines the dual root datum $(\widehat{G},\widehat{B},\widehat{T})$, where the Langlands dual group $\widehat{G}=\GSpin_5$ is identified with $\GSp_4$ via the spin isomorphism (cf.\ \cite[\S3.2]{MT02}). The cocharacter in $X_*(\widehat{T})$ corresponding to $(a,b;c)\in X^*(T)$ is
\begin{equation*}
    t \mapsto \mathrm{diag}(t^{(a+b+c)/2}, t^{(a-b+c)/2}, t^{(-a+b+c)/2}, t^{(-a-b+c)/2}).
\end{equation*}
If $t= \mathrm{diag}(t_1, t_2, \nu t_2^{-1}, \nu t_1^{-1})=:[t_1,t_2; \nu] \in T$, then $\alpha_1(t)=t_1t_2^{-1}$ and $\alpha_2(t)=t_2^2\nu^{-1}$. If $v$ is a finite place of a number field $F$ with ring of integers $\mathcal{O}_{F_v}$ and residue field $k(v)$, then we have the standard subgroups of $\GSp_4(F_v)$:
\begin{itemize}[leftmargin=1.1em]
    \item The hyperspecial subgroup $\GSp_4(\mathcal{O}_{F_v})$.
    \item The Iwahori subgroup $\mathrm{Iw}(v)$, the preimage of $B(k(v))$ in $\GSp_4(\mathcal{O}_{F_v})$.
\end{itemize}
We denote by $P$ the Siegel parabolic of $\GSp_4$, whose Levi is given by
\begin{equation*}
    L(P)
  =\left\{
      \begin{pmatrix} A & \\ & uA' \end{pmatrix}
      : A\in\GL_2,\ u\in\GL_1
    \right\}
  \simeq \GL_2\times\GL_1,
  \qquad
  A' = s\, {}^t\!A^{-1} s.
\end{equation*}
The positive roots of $T$ (with respect to $B$) are $R^+(T)=\{\alpha_1, \alpha_2, \alpha_1+\alpha_2, 2\alpha_1+\alpha_2\}$. The modulus character $\delta_B:B(F_v) \to \mathbb{R}_{\geq 0}$, \ $\delta_B(x) = \lvert \det{(\mathrm{Ad}(x)) \vert_{\mathrm{Lie}(N_B)}}\rvert_v$, for $N_B$ the unipotent radical of $B$ and $\lvert \cdot \rvert_v$ the normalised\footnote{so that $\lvert \varpi_v \rvert_v = q_v^{-1}$ for any uniformiser $\varpi_v$ of $F_v$} absolute value on $F_v$ is given by
\begin{equation*}
    \delta_B\begin{psmallmatrix}
        t_1 & * & * & * \\
        & t_2 & * & * \\
        & & \nu t_2^{-1} &* \\
        & & & \nu t_1^{-1}
    \end{psmallmatrix}= \delta_B([t_1,t_2; \nu])=|(4\alpha_1+3\alpha_2)(t)|_v = \frac{|t_1|_v^4 |t_2|_v^2}{|\nu|_v^3}.
\end{equation*}

\subsection{The non-archimedean Local Langlands correspondence} \label{s: The non-archimedean Local Langlands correspondence} 
Let $\ell$ be a rational prime and $K/\mathbb{Q}_\ell$ be finite. We denote by $\mathrm{rec}_K$ and $\mathrm{rec}_{\mathrm{GT}}$ the local Langlands correspondences for $\GL_n(K)$ and $\GSp_4(K)$ defined in \cite{HT01} and \cite{GT11a} respectively, so that 
\begin{enumerate}[leftmargin=1.5em,itemsep=2pt]
    \item[$(i)$] if $\pi$ is an irreducible, complex, admissible representation of $\GL_n(K)$, then $\mathrm{rec}_K(\pi)$ is a Frobenius– semisimple Weil--Deligne representation of $W_K$ with coefficients in $\GL_n(\C)$.
    \item[$(ii)$] $\mathrm{rec}_\mathrm{GT}$ is a surjective, finite-to-one map from the set of equivalence classes of irreducible smooth complex representations of $\GSp_4(K)$ to the set of $\GSp_4$-conjugacy classes of $\GSp_4(\mathbb{C})$-valued Weil--Deligne representations of $W_K$, normalized so that if $\psi_\pi$ is the central character of $\pi$, then $\nu \circ \mathrm{rec}_\mathrm{GT}(\pi) = \mathrm{rec}_K(\psi_\pi)$ and $\mathrm{rec}_\mathrm{GT}(\pi \otimes (\chi \circ \nu)) = \mathrm{rec}_\mathrm{GT}(\pi) \otimes \mathrm{rec}_K(\chi)$.
\end{enumerate}
By \cite{CG15}, $\mathrm{rec}_{\mathrm{GT}}$ agrees with Arthur’s local Langlands correspondence for $\GSp_4$ and is compatible with the trace formula lifting from $\GSp_4$ to $\GL_4$. Fix an algebraic closure $\overline{\Q}/\Q$. For each prime $p$, fix once and for all embeddings $\overline{\Q} \hookrightarrow \overline{\Q}_p$ and $\overline{\Q} \hookrightarrow \C$, and an isomorphism $\iota_p : \overline{\Q}_p \xrightarrow{\sim} \C$ compatible with these embeddings. Write $\mathrm{rec}_{K,p} := \iota_p^{-1}\circ\mathrm{rec}_K\circ\iota_p,$ and $\mathrm{rec}_{\mathrm{GT},p} := \iota_p^{-1}\circ\mathrm{rec}_{\mathrm{GT}}\circ\iota_p,$ for the $\overline{\Q}_p$-valued versions of the above correspondences. The choice of $\iota_p$ does not play a role in our arguments, and we identify complex automorphic representations with their $\overline{\Q}_p$ ones via $\iota_p^{-1}$. In particular, $\iota_p$ fixes a choice of square root of $p$ in $\overline{\Q}_p$ corresponding to the positive square root in $\C$.

We use the cohomological normalisation of class field theory: for a non-archimedean local field $F$ of characteristic zero, the local Artin map $  \Art_F : F^\times \rightarrow W_F^{\mathrm{ab}}$ sends uniformisers to geometric Frobenii. For a number field $F$, the global Artin map $\Art_F : \A_F^\times \to G_F^{\mathrm{ab}}$ is the product of the local maps. If $F$ is a number field and $\phi:F^\times\backslash\A_F^\times\to\C^\times$ an algebraic Hecke character, denote by $\phi_p$ the associated global $p$-adic Galois character obtained via the global Artin map. For $v$ a finite place of $F$, let $\eta_v$ denote the id\`ele with component $\varpi_v$ at $v$ and $1$ elsewhere.

Let $F$ be a non-archimedean local field of characteristic zero. If $\rho:G_F \to \GL_n(\overline{\mathbb{Q}}_p)$ is a continuous representation, then we denote by $\WD(\rho)=(r, N)$ the associated Weil--Deligne representation, where $r:W_F\to\GL_n(\overline{\Q}_p)$, $N \in M_n(\overline{\Q}_p)$ and by $\WD(\rho)^{F-ss}$ its associated Frobenius semisimplification. If $\rho:G_F \to \GSpin_5(\overline{\mathbb{Q}}_p) \simeq \GSp_4(\overline{\mathbb{Q}}_p)$ is a continuous representation, then we denote by $\WD(\rho)=(r, N)$ the associated Weil--Deligne representation, where $r:W_K \to \GSp_4(\overline{\Q}_p)$, $N \in \mathfrak{gsp}_4(\overline{\Q}_p)$.

\subsection{Arthur's Classification for automorphic representations of $\GSp_4$} \label{s: Arthur's Classification}
We recall Arthur's classification \cite{Art04}, which in particular describes the $A$-parameters $\psi$ of each of the six general families of automorphic representations for $\GSp_4$ over a number field $F$ that occur in the discrete spectrum. Let $\nu(n)$ denote the irreducible representation of $\mathrm{SL}_2(\mathbb{C})$ of dimension $n$. An automorphic $\GL_n(\mathbb{A}_F)$-representation $\mu$ is said to be $\chi$-self dual for an id\`ele class character $\chi$ of $F$ if $\mu\simeq\mu^\vee\otimes\chi$. As much progress has been made on making Arthur's classification unconditional \cite{Ato+25}, dependent upon on cases of the twisted weighted fundamental lemma, we assume this.
\begin{enumerate}
    \item[(a)] Stable, semisimple (general type)
    \begin{equation*}
        \psi = \psi_1 = \mu \boxtimes 1,
    \end{equation*}
    where $\mu$ is a $\chi$-self dual, unitary cuspidal automorphic representation of $\GL_4(\mathbb{A}_F)$ that is not of orthogonal type.
    \item[(b)] Unstable, semisimple (Yoshida type)
    \begin{equation*}
        \psi = \psi_1 \boxplus \psi_2 = (\mu_1 \boxtimes 1) \boxplus (\mu_2 \boxtimes 1),
    \end{equation*}
    where $\mu_i$ are distinct unitary, cuspidal automorphic representations of $\GL_2(\mathbb{A}_F)$ whose central characters satisfy $\chi_{\mu_1} = \chi_{\mu_2} = \chi$.
    \item[(c)] Stable, mixed (Soudry type)
    \begin{equation*}
        \psi = \psi_1 = \mu  \boxtimes \nu(2),
    \end{equation*}
    where $\mu = \mu(\theta)$ is a unitary cuspidal automorphic representation of $\GL_2(\mathbb{A}_F)$ of orthogonal type with $\chi_\mu^2 = \chi$.
    \item[(d)] Unstable, mixed (Saito--Kurokawa type)
    \begin{equation*}
        \psi = \psi_1 \boxplus \psi_2 = (\lambda \boxtimes \nu(2)) \boxplus (\mu \boxtimes 1),
    \end{equation*}
    where $\lambda$ is in id\`ele class character of $F$ and $\mu$ is a unitary, cuspidal automorphic representation of $\GL_2(\mathbb{A}_F)$ with $\lambda^2 = \chi_\mu = \chi$.
    \item[(e)] Unstable, almost unipotent (Howe, Piatetski-Shapiro type)
    \begin{equation*}
        \psi = \psi_1 \boxplus \psi_2 = (\lambda_1 \boxtimes \nu(2)) \boxplus (\lambda_2 \boxtimes \nu(2)), 
    \end{equation*}
    where $\lambda_i$ are distinct id\`ele class characters of $F$ with $\lambda_1^2=\lambda_2^2=\chi$.
    \item[(f)] Stable, almost unipotent (one dimensional type)
    \begin{equation*}
        \psi = \psi_1 = \lambda \boxtimes \nu(4),
    \end{equation*}
    where $\lambda$ is an id\`ele class character of $F$ with $\lambda^4 = \chi$.
\end{enumerate}
\subsection{Representations of $\GSp_4$} \label{s:Mok}
Let $F$ be a totally real number field, $\A_F$ its ring of ad\`eles, and $G_F$ the absolute Galois group of $F$ with respect to a fixed algebraic closure $\overline{F}/F$.  For each place $v$ of $F$, let $L_{F_v}$ denote the $\mathrm{SL}_2$ form of the local Langlands group
\[
L_{F_v}:=\begin{cases}
  W_{F_v} & v \text{ archimedean},\\[4pt]
  W_{F_v}\times\mathrm{SL}_2(\C) & v \text{ non-archimedean}.
\end{cases}
\]
In particular, $L_\C=W_\C=\C^\times$ and $L_\R=W_\R=\C^\times\cup\C^\times j$, where $j^2=-1$ and
$j z j^{-1}=\overline{z}$ for $z\in\C^\times$. Fix a rational prime $p$. We define a class of representations of $\GSp_4(\A_F)$ that are expected to admit $p$-adic Galois representations, following \cite{BG14}.
\begin{defn} \label{def: aut rep of Galois type}
Let $\Pi$ be an automorphic representation of $\GSp_4(\A_F)$ with central character $\phi$ and let $S$ be the set of the places at which $\Pi$ is ramified together with those dividing $p$ and $\infty$. Then $\Pi$ is of \emph{Galois type} if $\Pi$ is discrete, the Hecke eigenvalues of $\Pi$ generate a number field\footnote{Although in general the Hecke eigenvalues of an automorphic representation may not generate a number field, those which are conjectured to have associated Galois representations are also conjectured to have Hecke eigenvalues generating a number field (cf. \cite{BG14}).} and there is a continuous semisimple Galois representation  $\rho_\Pi: G_F \to  \GSp_4(\overline{\Q}_p)$ that satisfies

\begin{itemize}
    \item $\nu \circ \rho_\Pi = \phi_p \epsilon^{-3}$, where $\phi_p$ is the $p$-adic Galois character associated to $\phi$, and there exists an integer $w$ such that for all archimedean places $v$ of $F$, the component of $\phi$ at $v$ is given by $\phi_v:a \mapsto a^{-w}$. 
    \item If $v \not \in S$, then $\rho_\Pi|_{G_{F_v}}$ is unramified, and for all $v$,  
    \begin{equation*}
         \iota_p \WD(\rho_\Pi|_{G_{F_v}})^{ss} \simeq \mathrm{rec}_{\mathrm{GT}}(\Pi_v \otimes |\nu|_v^{-3/2})^{ss}.
    \end{equation*}
    \item Write $\rho_\Pi$ for the composition of $\rho_\Pi$ with the inclusion $\mathrm{std} : \GSp_4(\overline{\Q}_p) \hookrightarrow \GL_4(\overline{\Q}_p)$. 
    If $v \vert p$, then $\rho_\Pi|_{G_{F_v}}$ is de Rham and, identifying embeddings $F \xhookrightarrow{}\overline{\mathbb{Q}}_p$ with the archimedean places of $F$ via $\iota_p:\overline{\mathbb{Q}}_p \cong \mathbb{C}$, the Hodge–Tate cocharacter associated to $\rho_\Pi|_{G_{F_v}}$ corresponds to the character $-\lambda_{\Pi, v_{\infty}} + (0,0;3)$ via the isomorphism in \S \ref{s: lie theory}, where $\lambda_{\Pi, v_{\infty}}$ is the infinitesimal character of $\Pi_{v_{\infty}}$. In particular, if $\lambda_{\Pi, v_{\infty}} = (a_v, b_v; c_v)$ with $a_v \geq b_v$ is $B$-dominant, then
    the Hodge--Tate weights at the embedding corresponding to $v_\infty \vert \infty$ are given by
    \begin{equation*}
        \{\delta_v, \delta_v + b_v, \delta_v + a_v, \delta_v + a_v + b_v\}, \quad \delta_v: = 1/2(-a_v-b_v + 3-c_v).
    \end{equation*}
    \item If $v \vert p$ and $\Pi_v$ is unramified, then $\rho_\Pi|_{G_{F_v}}$ is crystalline and $P^{\mathrm{cris}}_{\Pi,v}(X)=Q_{\Pi,v}(X)$, where $P^{\mathrm{cris}}_{\Pi,v}$ is the inverse characteristic polynomial of crystalline Frobenius on $\Dcris(\rho_\Pi|_{G_{F_v}})$ and $Q_{\Pi,v}$ is the inverse characteristic polynomial of geometric Frobenius $\Frob_v$ on $\mathrm{std} \circ \mathrm{rec}_{\mathrm{GT},p}(\Pi_v\otimes|\nu|_v^{-3/2})$.
\end{itemize}
\end{defn}

\begin{lemma}
\label{lem:gal-reps}
If $\Pi$ is cohomological and cuspidal, then $\Pi$ is of Galois type (\cite[Thm.~3.5]{Mok14}). The same is true if $\Pi$ is Saito--Kurokawa (Lemma~\ref{lem:SK-cohomological-structure}).
\end{lemma}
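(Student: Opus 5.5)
The lemma has two parts, and I will treat them separately. The first part, for cohomological cuspidal $\Pi$, is taken directly from \cite[Thm.~3.5]{Mok14}. The only work is to match Mok's normalisations with those of Definition~\ref{def: aut rep of Galois type}. There are four items to check:
\begin{enumerate}
\item the twist $|\nu|_v^{-3/2}$ in local--global compatibility;
\item the similitude $\nu\circ\rho_\Pi=\phi_p\epsilon^{-3}$;
\item the Hodge--Tate cocharacter $-\lambda_{\Pi,v_\infty}+(0,0;3)$;
\item the crystalline Frobenius characteristic polynomial at unramified $v\mid p$.
\end{enumerate}
Mok proves local--global compatibility up to semisimplification at all $v$, including $v\mid p$, where he obtains crystallinity in the unramified case. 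For each item I will translate his statement through $\mathrm{rec}_{\mathrm{GT}}$, using its compatibility with Arthur's correspondence \cite{CG15}. The fact that the Hecke eigenvalues generate a number field follows because $\Pi$ is cohomological: $\Pi_f$ occurs in the cohomology of a Siegel threefold with coefficients in an algebraic local system, so it has a model over a number field.

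For the Saito--Kurokawa case, I plan to build $\rho_\Pi$ explicitly from the $A$-parameter $\psi=(\lambda\boxtimes\nu(2))\boxplus(\mu\boxtimes 1)$ of Arthur type (d). With trivial central character, $\lambda$ is trivial or quadratic, and $\mu$ corresponds to the weight $2k-2$ newform $f$. I would set
\[
\rho_\Pi=\lambda_p\epsilon^{-1}\oplus\rho_\mu\oplus\lambda_p\epsilon^{-2},
\]
up to the precise Tate twists. Here $\rho_\mu$ is the Deligne representation of $f$, suitably twisted so that its determinant matches $\epsilon^{-3}$. The placement in the middle of the $\GSp_4$ embedding makes the pairing symplectic: the two characters multiply to the similitude, and $\rho_\mu$ carries its own symplectic determinant pairing. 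The cited Lemma~\ref{lem:SK-cohomological-structure} presumably supplies the structural facts that $\Pi_\infty$ is cohomological, with a computable infinitesimal character, and that $\Pi_v=\Pi(\mu_v,\lambda_v)$ is the local SK packet member.

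The checks then go as follows.
\begin{enumerate}
\item \emph{Unramified places.} The Satake parameter of $\Pi_v$ is $\{\lambda_v q_v^{\pm1/2}\}\cup\mathrm{Sat}(\mu_v)$, since it is the Langlands quotient attached to $\psi$. After the $|\nu|^{-3/2}$ twist, this matches the Frobenius eigenvalues of $\rho_\Pi$ by Eichler--Shimura--Deligne.
\item \emph{Ramified places $v\nmid p$.} I would use Carayol's local--global compatibility for $\rho_\mu$. The SK local packet has the same $L$-parameter up to semisimplification, which is all that is required.
\item \emph{Places $v\mid p$.} Crystallinity and the Frobenius polynomial follow from Saito/Scholl for $\rho_\mu$ together with the trivially crystalline powers of $\epsilon$. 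The Hodge--Tate weights $\{1,2\}\cup\{0,2k-3\}$, up to a shift, have to be matched against $-\lambda+(0,0;3)$ using the infinitesimal character of $\Pi_\infty$.
\end{enumerate}
Rationality of the Hecke eigenvalues comes from that of $f$.

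I expect the main obstacle to be bookkeeping of normalisations rather than anything conceptual. Two points need care: consistency of $\mathrm{rec}_{\mathrm{GT}}$ on nongeneric SK local components, where one must confirm it gives the $\phi$-parameter of the Langlands quotient, and the archimedean weight matching. For the first, I would use the explicit tables in \cite{GT11a} and \cite{Sch05}.
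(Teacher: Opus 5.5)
Your proposal is correct and follows essentially the same route as the paper. The paper cites \cite[Thm.~3.5]{Mok14} for the cuspidal cohomological case. For the Saito--Kurokawa case it realises $\rho_\Pi$ as $\omega_p(-1)\oplus\omega_p(-2)\oplus\rho_{\mu,\omega}$ in Lemma~\ref{lem:SK-cohomological-structure}, which is justified only by citing \cite{Mok14} and \cite{Sch05}; your version just spells out the normalisation and local--global compatibility checks that those citations leave implicit.
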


Let $\mu'$ be a cuspidal automorphic representation of $\PGL_2(\A_F)$ with global sign $\epsilon(1/2,\mu')$ such that $\mu_v'$ is holomorphic discrete series at all $v \vert \infty$, and let $S_\mathrm{Sch}$ be a nonempty finite set of places, disjoint from those dividing $\{p\}$, such that $\mu'_v$ is discrete series for each $v\in S_\mathrm{Sch}$ and $(-1)^{\# S_\mathrm{Sch}} = \epsilon(1/2,\mu')$. By Schmidt's construction \cite[Thm.~3.1]{Sch05}, there exists a $\PGSp_4(\A_F)$ representation defined by 
\begin{equation*}
    \Pi(\mu' \otimes \pi_{S_\mathrm{Sch}})=\bigotimes_v\Pi((\mu')_v \otimes \pi_{S_\mathrm{Sch}, v}), \qquad \pi_{S_\mathrm{Sch}, v} = \begin{cases}
    1_v & v \not \in S_\mathrm{Sch},\\[4pt]
    \mathrm{St}_v & v \in S_\mathrm{Sch}.
  \end{cases}
\end{equation*} 
We denote by $\pi$ the $\GSp_4(\mathbb{A}_F)$ lift of $\Pi(\mu' \otimes \pi_{S_\mathrm{Sch}})$ and call such a representation \emph{Saito--Kurokawa}. Its global $A$-parameter is $\psi =(\mu  \boxtimes 1)\boxplus(1_F \boxtimes \nu(2))$ in the sense of Arthur's classification, where $1_F$ denotes the trivial id\`ele class character of $F$ and $\mu$ is the cuspidal automorphic representation of $\GL_2(\mathbb{A}_F)$ lifting $\mu'$. The central character of $\pi$ is $\psi_\pi=\psi_\mu=1$. Such a representation $\pi$ is noncuspidal precisely when $S_\mathrm{Sch} = \emptyset$ and $L(1/2, \mu) \neq 0$. The SK points for which we prove complete rigidity are those whose underlying representation $\pi$ is noncuspidal.

If $\omega$ is an algebraic Hecke character of $F$ and $\nu$ is the similitude character of $\GSp_4$, then we also call the twist $\pi_\omega := (\omega\circ\nu)\otimes\pi$ Saito--Kurokawa, its global $A$-parameter is $\psi_\omega = (\mu_\omega\boxtimes 1) \boxplus (\omega\boxtimes\nu(2))$, where $\mu_\omega := \mu\otimes\omega$, and its central character is $\omega^2$. By the properties of $\mathrm{rec}_{\mathrm{GT}}$ and the shape of its global $A$-parameter, a $p$-adic Galois representation can be attached to $\pi_\omega$.

\begin{lemma}
\label{lem:SK-cohomological-structure}
Let $\pi_\omega$ be a Saito--Kurokawa representation of $\GSp_4(\A_\Q)$ with trivial central character, and global $A$-parameter $\psi_\omega = (\mu_\omega\boxtimes 1) \boxplus (\omega\boxtimes\nu(2))$ with $\mu_\omega = \mu\otimes\omega$. Then:
\begin{enumerate}
\item[(1)] There exists a continuous, semi-simple Galois representation $\rho_{\pi, \omega} : G_\Q \to \GL_4(\overline{\mathbb{Q}}_p)$ defined by 
\begin{equation*}
    \rho_{\pi, \omega}=\omega_p(-1)\oplus\omega_p(-2)\oplus\rho_{\mu, \omega},\qquad
\rho_{\mu, \omega}:=\widetilde{\rho}_{\mu, \omega}(-1),  
\end{equation*} 
where for each finite prime $v$,\[  \widetilde{\rho}_{\mu, \omega}|_{G_{\Q_v}}  \cong \mathrm{rec}_{F_v}\!\left(\mu_{\omega,v}\otimes \lvert\cdot\rvert_v^{-1/2}\right).\]
\item[(2)] The infinitesimal character of $\pi_{\omega, \infty}$ is $(k-1, k-2;0)$ for some $k \in \Z_{\ge 2}$ and the Hodge--Tate weights of $\rho_{\pi, \omega}$ at $p$ are $\{3-k, 1, 2, k\}$. In particular, $\rho_{\mu, \omega}$ has Hodge--Tate weights $\{3-k, k\}$ and corresponds to a holomorphic newform of weight $2k-2$.
\end{enumerate}
\end{lemma}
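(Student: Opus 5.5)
The plan is to build $\rho_{\pi,\omega}$ directly from the two summands of the $A$-parameter $\psi_\omega$, and then read off the infinitesimal character and Hodge--Tate weights from the archimedean parameter.

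\textbf{Step 1: the $\GL_2$ part.} First I will show that $\mu_\omega$ is cohomological, with trivial central character since $\omega^2=\psi_\mu\omega^2$ is trivial. It therefore comes from a holomorphic newform of some weight $2k-2$. I then invoke Eichler--Shimura--Deligne together with local--global compatibility at all finite primes (Carayol, plus Saito at $p$) to obtain $\widetilde{\rho}_{\mu,\omega}$ satisfying the stated local identity with $\mathrm{rec}_{F_v}(\mu_{\omega,v}\otimes|\cdot|_v^{-1/2})$.

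\textbf{Step 2: the $\nu(2)$ part.} The summand $\omega\boxtimes\nu(2)$ has $L$-parameter $\omega|\cdot|^{1/2}\oplus\omega|\cdot|^{-1/2}$ at almost all places. After the $|\nu|^{-3/2}$ normalisation of Definition~\ref{def: aut rep of Galois type} and the cohomological Artin normalisation, these become $\omega_p(-1)$ and $\omega_p(-2)$. I will check this on unramified Frobenius eigenvalues and fix the sign conventions so that $\nu\circ\rho=\epsilon^{-3}$ when the central character is trivial.

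\textbf{Step 3: local compatibility.} The local compatibility $\WD(\rho_{\pi,\omega}|_{G_{\Q_v}})^{ss}\simeq \mathrm{rec}_{\mathrm{GT}}(\pi_v\otimes|\nu|^{-3/2})^{ss}$ at every $v$ follows from \cite{CG15}. That result says $\mathrm{rec}_{\mathrm{GT}}$ is compatible with Arthur's local parameters, and the semisimplified $L$-parameter of $\pi_v$ is $\phi_{\psi_v}$, namely $\mu_{\omega,v}\oplus\omega_v|\cdot|^{1/2}\oplus\omega_v|\cdot|^{-1/2}$.

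I expect this step to be the main obstacle, at places where $\pi_v$ is not the Langlands quotient attached to $\phi_{\psi_v}$. For $v\in S_{\mathrm{Sch}}$, Schmidt's $\Pi(\mu'_v\otimes\mathrm{St}_v)$ has a different $L$-parameter. What I would try first is to restrict to the semisimplification, where only the Weil-group part matters. There Schmidt's explicit description shows that the cuspidal/Steinberg constituent still has the same semisimplified parameter, so the identity holds after ${}^{ss}$.

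\textbf{Step 4: archimedean part and crystallinity.} At $\infty$, $\pi_\infty$ is the nontempered Langlands quotient (for $S_{\mathrm{Sch}}=\emptyset$) whose infinitesimal character combines that of $\mu_\infty$, namely discrete series of weight $2k-2$ with parameter $\pm(k-3/2)$, with $\pm1/2$. Converting via the spin identification of \S\ref{s: lie theory} gives $(k-1,k-2;0)$.

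Plugging this into the formula for Hodge--Tate weights yields $\delta=1/2(-(2k-3)+3)=3-k$, and hence the weights $\{3-k,1,2,k\}$. The weights $1$ and $2$ come from $\omega_p(-1)$ and $\omega_p(-2)$ (with $\omega$ of finite order up to the algebraic twist), which leaves $\{3-k,k\}$ for $\rho_{\mu,\omega}$. That is consistent with weight $2k-2$, since $k-(3-k)=2k-3$.

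Finally, crystallinity and the equality of Frobenius polynomials at unramified $p$ follow from Scholl/Saito for $\widetilde{\rho}_{\mu,\omega}$ and are trivial for the characters. This establishes that $\pi_\omega$ is of Galois type, as asserted in Lemma~\ref{lem:gal-reps}.
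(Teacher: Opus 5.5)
Your proposal is correct and is essentially an unpacked version of the paper's proof. The paper simply cites Mok's construction of the Galois representation attached to a Saito--Kurokawa-type $A$-parameter (built from the $\GL_2$ summand and the two characters) together with Schmidt's explicit description of the local packets, which is exactly what your Steps 1--4 do. One small addition: when $\infty\in S_{\mathrm{Sch}}$, the archimedean component is a holomorphic discrete series rather than the nontempered Langlands quotient, but it has the same infinitesimal character, namely that of $\phi_{\psi_\infty}$, so your computation of $(k-1,k-2;0)$ and of the Hodge--Tate weights goes through unchanged.
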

\begin{proof}
This follows from \cite[\S \S 3.1--3.2]{Mok14} and \cite[Thm.~3.1, \S 4]{Sch05}. 
\end{proof}
\begin{rem} \label{rem: F=Q}
Although Galois representations exist for (Saito--Kurokawa) representations over general totally real fields and the Bloch--Kato conjecture predicts that $H_f^1(F,\rho_{\mu,F}(2))$ is determined by the order of vanishing of $L(s,\rho_{\mu,F}(2))$ at its central value, our rigidity theorem is specific to the case $F=\Q$; the vanishing of $H_f^1(\Q,\Q_p(1))$, intrinsic to our argument, fails for totally real $F \neq \Q$, for which $H_f^1(F,\Q_p(1))\cong \mathcal{O}_F^\times\otimes\Q_p$ has positive dimension. 
\end{rem}

\subsection{$p$-adic Hodge and Bloch--Kato theory} \label{s: p-adic Hodge}
For a finite set $S$ of rational primes containing $p$, let $G_{\Q,S}$ denote the Galois group of the maximal extension of $\Q$ inside $\overline{\Q}$ unramified outside $S$. Let $\epsilon : G_\Q \to \Z_p^\times$ denote the $p$-adic cyclotomic character, and also its restriction to the decomposition group $G_{\Q_p}:=\Gal(\overline{\Q}_p/\Q_p)$ determined by $\overline{\Q} \xhookrightarrow{} \overline{\Q}_p$. Write $\Q_p(1)$ for the one-dimensional $G_{\Q_p}$-representation defined by~$\epsilon$ and for $W$ a representation of $G_{\Q_p}$ and $m\in\Z$, let $W(m) := W\otimes_{\Q_p}\epsilon^m.$ We use the convention that $\epsilon$ has Hodge--Tate weight $-1$. For a $p$-adic Galois representation $V$ unramified outside $S$ and $\star \in \{f, g\}$, we define
    \begin{align*}
        H_\star^1(\mathbb{Q}, V):= \ker \left( H^1(G_{\mathbb{Q},S}, V) \longrightarrow \bigoplus_{\ell \in S} \frac{H^1(\mathbb{Q}_\ell, V)}{H_\star^1(\mathbb{Q}_\ell, V)}\right),
    \end{align*}
    where 
    \begin{align*}
     H_f^{1}(\mathbb{Q}_\ell , V) :=& \left\{ \begin{array}{ll}
        H_\mathrm{ur}^1(\mathbb{Q}_\ell, V):=  \ker\left(H^1(\mathbb{Q}_\ell, V) \to H^1(I_\ell, V)\right)=H^1(G_{\mathbb{Q}_\ell}/I_\ell, V^{I_\ell}) & \ell \neq p\\
             \ker\left(H^1(\mathbb{Q}_p, V) \to H^1(\mathbb{Q}_p, V \otimes_{\mathbb{Q}_p} B_{\mathrm{cris}})\right) & \ell=p
        \end{array} \right. \\
        H_g^{1}(\mathbb{Q}_\ell , V): =& \left\{ \begin{array}{ll}
        H^1(\mathbb{Q}_\ell, V) & \ell \neq p \\
              \ker\left(H^1(\mathbb{Q}_p, V) \to H^1(\mathbb{Q}_p, V \otimes_{\mathbb{Q}_p} B_{\mathrm{dR}})\right) & \ell=p 
        \end{array} \right.
    \end{align*}
where $I_\ell$ is the inertia subgroup of $G_{\mathbb{Q}_\ell}$ and $B_\mathrm{cris}$, $B_\mathrm{dR}$ are the period rings defined in \cite{Fon94}. In particular, the group $H_f^1(\mathbb{Q}, V)$ is independent of the choice of $S \setminus \{p \}$.

\section{Local representation theory}
\label{s: Local representation theory}
Let $F$ be a non-archimedean local field and $\Omega$ an algebraically closed field. Let $G$ be a connected reductive algebraic group over $F$ and $P=MU \subset G$ a parabolic subgroup with Levi $M$ and unipotent radical $U$. If $\pi$ is an admissible $\Omega[P(F)]$-module, then denote by $\mathrm{Ind}_{P(F)}^{G(F)} \pi$ the smooth induction. For $\Omega= \mathbb{C}$, denote by $i_P^G \pi: = \mathrm{Ind}_{P(F)}^{G(F)} \pi\otimes \delta_P^{1/2}$ the normalised induction, where $\delta_P: P(F) \to \mathbb{R}_{>0}$, \ $\delta_P(x) = \lvert \det{(\mathrm{Ad}(x)) \vert_{\mathrm{Lie}(U)}}\rvert_F$ is the modulus character. Let $(\pi, V )$ be a representation of $G$. Let $(\rho, W)$ be a representation of $M$, which we extend to a representation of $P$ by letting $U$ act trivially. Let $\Alg(G)$ denote the category of smooth $G$-representations.

\begin{lemma} \label{lem: geometric lemma}
    Let $P, B \subset G= \GSp_4$ be the Siegel parabolic and upper triangular Borel. Then the semisimplification of the normalised Jacquet module $J_B(\pi)$ of a  parabolic induction $\pi= i_P^G \psi$ for a character $\psi \in \mathrm{Alg}(L(P))$ is 
    \begin{equation*}
        J_B(\pi)^{ss} \simeq \bigoplus_{w \in W(P)} (\psi' \cdot w), \qquad \psi'=\psi \delta_P^{-1/2}\vert_T, \qquad (\psi'\cdot w)(t) = \psi'(w t w^{-1}),
    \end{equation*}
where $W(P)= \{\mathrm{id}, s_2, s_2 s_1, s_2 s_1 s_2\}$ and $\delta_P = \delta_{B_2} \times \delta_{B_1}$, for $B_n$ the upper triangular Borel of $\GL_n$.
\end{lemma}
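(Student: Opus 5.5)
This is a direct application of the Bernstein--Zelevinsky geometric lemma to the Borel $B$ and the Siegel parabolic $P$. The plan has three steps.

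First, apply transitivity of the Jacquet functor. Write $B = T N_B$ and $P = L(P) U_P$, with $U_P \subset N_B$, and let $B_L := B \cap L(P)$. Since $B \subset P$, we have $J_B = J_{B_L}^{L(P)} \circ J_P$, where both functors are normalised. It therefore suffices to compute $J_P(i_P^G \psi)^{ss}$ as an $L(P)$-representation and then take its Jacquet module along $B_L \simeq B_2 \times \mathbb{G}_m$.

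Second, apply the geometric lemma. It gives
\[
J_P(i_P^G \psi)^{ss} \simeq \bigoplus_{w \in W^{P,P}} i^{L(P)}_{L(P)\cap wPw^{-1}} \circ w \circ J_{L(P) \cap w^{-1}Pw}(\psi),
\]
where $W^{P,P} = W_{L}\backslash W_G / W_{L}$ is the set of minimal-length double coset representatives and $W_L = \langle s_1\rangle$. We then apply $J_{B_L}$ once more. The cleanest route is to do the whole computation in one step using the Bernstein--Zelevinsky filtration of $J_B(i_P^G\psi)$ indexed by $W^{B,P} = W_G/W_L$ (minimal-length coset representatives). Since $\psi$ is a character of $L(P)$, its Jacquet module along $B_L \cap wBw^{-1}$ is just its restriction to $T$, suitably twisted.

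The contribution of the cell indexed by $w$ is then $\psi|_T\cdot w$ twisted by modulus characters. These twists combine to $\delta_P^{-1/2}|_T$. The reason is that $\psi$ is trivial on unipotents, so restricting $\psi\otimes\delta_P^{1/2}$ to $B_L$ and renormalising by $\delta_{B_L}^{-1/2}$ and $\delta_B^{1/2}$ produces $\psi\,\delta_P^{-1/2}\delta_B^{\pm}\dots$. Here $\delta_B=\delta_P\delta_{B_L}$ on $T$, and the factors cancel to give $\psi'=\psi\delta_P^{-1/2}|_T$. The paper's convention $\delta_P=\delta_{B_2}\times\delta_{B_1}$ should be read off precisely at this point, and I would carry out the normalisation check explicitly on $T$ using the formula for $\delta_B$ in \S\ref{s: lie theory}.

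Third, identify the index set. $W_G$ has order $8$ and $W_L=\{1,s_1\}$, so there are $4$ cosets. Their minimal-length representatives, in the paper's action convention $(\lambda\cdot w)(t)=\lambda(wtw^{-1})$, are $\mathrm{id}, s_2, s_2s_1, s_2s_1s_2$. I would verify this by checking that none of these words ends in $s_1$ on the relevant side, and that they have lengths $0,1,2,3$. This fixes $W(P)$ as stated.

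The main obstacle is bookkeeping. We must ensure that the side of the coset (left versus right) matches the convention $\psi'\cdot w$, so that we obtain $s_2s_1$ and not $s_1s_2$, and that the modulus-character twists collapse exactly to $\delta_P^{-1/2}$. I would settle both by testing on the unramified example $\psi=\chi\circ\det\otimes\sigma$ and comparing with the known Jacquet module of the full Borel induction, which has $8$ exponents $\chi_{\rm full}\cdot w$. Half of these must be recovered via $i_P^G\psi\hookrightarrow i_B^G(\psi\delta_{B_2}^{-1/2}\cdots)$.
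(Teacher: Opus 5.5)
Your route is the paper's: its proof is just the Bernstein--Zelevinsky geometric lemma with $M=L(P)$, $N=T$ and index set $W^{L(P),T}$. The problem is the one step with real content, the normalisation, which you state incorrectly.

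You invoke $\delta_B=\delta_P\,\delta_{B_L}$ on $T$. That identity holds for the genuine modulus character of the Siegel parabolic defined at the start of \S\ref{s: Local representation theory}, namely $\delta_P([t_1,t_2;\nu])=\lvert t_1t_2\rvert^3\lvert\nu\rvert^{-3}$. For that character, however, the twists do not collapse to $\delta_P^{-1/2}$. In the geometric lemma the modulus factors are already absorbed: the cell of a minimal $w$ contributes $w$ applied to $J^{L(P)}_{B_L}(\psi)=\psi|_T\,\delta_{B_L}^{-1/2}$, with no further twist. You can check this on the closed cell. The map $f\mapsto f(1)$ is a $B$-equivariant surjection onto $\psi\delta_P^{1/2}$, and normalising by $\delta_B^{-1/2}$ gives $\psi\,\delta_P^{1/2}\delta_B^{-1/2}=\psi\,\delta_{B_L}^{-1/2}$. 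This carries the opposite power of $\delta_P$ from what you wrote.

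For $\psi=\chi\circ\det\otimes\sigma$, the correct character is $\chi\lvert\cdot\rvert^{-1/2}\otimes\chi\lvert\cdot\rvert^{1/2}\otimes\sigma$. This is the $\psi_1$ of Proposition~\ref{prop: refinements}, and it is also the twist in your own closing check $i_P^G\psi\hookrightarrow i_B^G(\psi\delta_{B_2}^{-1/2})$. By contrast, $\psi\delta_P^{-1/2}$ with the genuine $\delta_P$ would give $\chi\lvert\cdot\rvert^{-3/2}\otimes\chi\lvert\cdot\rvert^{-3/2}\otimes\sigma\lvert\cdot\rvert^{3/2}$.

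The lemma is true only because its $\delta_P:=\delta_{B_2}\times\delta_{B_1}$ is by definition $\delta_{B_L}$, since $\delta_{B_1}=1$. This notation clashes with \S\ref{s: Local representation theory}. So the fix is to prove $\psi'=\psi\,\delta_{B_L}^{-1/2}|_T$ directly and then observe that this is the lemma's $\psi\delta_P^{-1/2}|_T$. You should not derive it from $\delta_B=\delta_P\delta_{B_L}$, which is incompatible with that conclusion.

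A smaller point concerns the index set. The set $\{\mathrm{id},s_2,s_2s_1,s_2s_1s_2\}$ is not a set of representatives for $W_G/W_L$: both $s_2$ and $s_2s_1$ lie in $s_2W_L$, and $s_2s_1$ does end in $s_1$. The filtration is indexed by the $w$ with $w(\alpha_1)>0$. These are the minimal representatives $\mathrm{id},s_2,s_1s_2,s_2s_1s_2$ of $W_G/W_L$, acting by $\psi'\mapsto\psi'\circ\mathrm{Ad}(w^{-1})$. Rewriting in the paper's right action $\psi'\cdot w=\psi'\circ\mathrm{Ad}(w)$ inverts them. The result is the set of minimal representatives of $W_L\backslash W_G$ (no word begins with $s_1$), which is the stated $W(P)$.

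With both corrections, for $\psi=\chi 1_{\GL_2}\otimes\chi^{-1}\omega^p$ the four characters $\psi'\cdot w$ are exactly $\psi_1,\dots,\psi_4$ of Proposition~\ref{prop: refinements}.
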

\begin{proof}
    This follows from the geometric lemma \cite[Geometrical Lemma]{BZ77} with $M=L(P) \simeq \GL_2 \times \GL_1$, $N=L(B)=T$ and $W(P) =  W^{L(P), T} = \{\mathrm{id}, s_2, s_2 s_1, s_2 s_1 s_2\}$.
\end{proof}

\subsection{Refinements of the Saito--Kurokawa representation}
Let $F$ be a non-archimedean local field of characteristic zero, and $P, B \subset \GSp_4$ be the Siegel parabolic and upper triangular Borel. For characters $\chi_1,\chi_2,\sigma$ of $F^\times$, let
\[
  \chi_1 \times \chi_2 \rtimes \sigma
  := i_B^{\GSp_4}(\chi_1\otimes\chi_2\otimes\sigma), \quad \chi_1 \otimes \chi_2 \otimes \sigma : \begin{pmatrix}
        t_1 & * & * & * \\
        & t_2 & * & * \\
        & & u t_2^{-1} & * \\
        & & & u t_1^{-1}
    \end{pmatrix}
\mapsto \chi_1(t_1) \chi_2(t_2) \sigma(u).
\]
If $\pi$ is a representation of $\GL_2(F)$ and $\sigma$ is a character of $F^\times$, let $\pi\rtimes\sigma$ be the representation of $\GSp_4(F)$ induced from the $P(F)$-representation
\begin{equation*}
    \begin{pmatrix}
        A & * \\ & uA'
    \end{pmatrix} \mapsto \sigma(u) \pi(A).
\end{equation*}
\begin{defn}
\label{def: refinement-local}
Let $F$ be a non-archimedean local field of characteristic zero, and let $\pi$ be an unramified representation of $\GSp_4(F)$.  An \emph{accessible refinement} of $\pi$ is a (necessarily smooth) character $\psi=\chi_1\otimes\chi_2\otimes\sigma$ appearing as a subquotient of $ J_B(\pi)^{T(\mathcal{O}_F)}$.
\end{defn}
\noindent
Our $p$-adic families interpolate systems of Hecke eigenvalues attached to automorphic $\GSp_4(\A_\Q)$-representations, unramified at a rational prime $p$, and of Galois type (Definition~\ref{def: aut rep of Galois type}). Such a system is determined by the global representation, together with an accessible refinement of $\Pi_p$.

\begin{prop}\label{prop: refinements}
Let $\omega : \Q^\times\backslash\A_\Q^\times \to \C^\times$ be an algebraic Hecke character with $p\nmid\mathrm{cond}(\omega)$ and set $\omega^p := \omega|_{\Q_p^\times}$.  Let $\pi_\omega = (\omega\circ\nu)\otimes\pi$ be a Saito--Kurokawa representation as in \S \ref{s:Mok}, unramified at $p$. Then \[
  \pi_{\omega,p}
  =
  L(\lvert\cdot\rvert_p^{1/2}\mu_p\rtimes\lvert\cdot\rvert_p^{-1/2}\omega^p) =\chi\,1_{\GL_2}\rtimes\chi^{-1}\omega^p, \quad \mu_p = i_{B_2}^{\GL_2}(\chi\otimes\chi^{-1}),
\]
where $\chi$, $\omega^p$ are unramified characters of $\Q_p^\times$ and $\chi \notin \{\lvert\cdot\rvert_p^{\pm 3/2},\ \lvert\cdot\rvert_p^{\pm 1}\xi\}$ for any quadratic character $\xi$ of $\Q_p^\times$, and $L(\cdot)$ is the Langlands quotient. The accessible refinements of $\pi_{\omega,p}$ are

\begin{multicols}{2}
\begin{enumerate}[leftmargin=1.7em]
\item $\psi_1= \chi \lvert\cdot\rvert_p^{-1/2} \otimes \chi \lvert\cdot\rvert_p^{1/2} \otimes \chi^{-1} \omega^p$
\item  $\psi_2= \chi \lvert\cdot\rvert_p^{-1/2} \otimes \chi^{-1} \lvert\cdot\rvert_p^{-1/2} \otimes \omega^p \lvert\cdot\rvert_p^{1/2}$
\item $\psi_3 =\chi^{-1} \lvert\cdot\rvert_p^{-1/2} \otimes \chi \lvert\cdot\rvert_p^{-1/2} \otimes \omega^p \lvert\cdot\rvert_p^{1/2}$
\item $\psi_4 = \chi^{-1} \lvert\cdot\rvert_p^{-1/2} \otimes \chi^{-1} \lvert\cdot\rvert_p^{1/2} \otimes \chi \omega^p$
\end{enumerate}
\end{multicols}
\noindent
They are pairwise distinct if and only if $\chi^2\neq1$ (if $\chi^2=1$, then $(\psi_1,\psi_2) = (\psi_4,\psi_3)$), in which case $J_B(\pi_{\omega,p})$ is semisimple. 
\end{prop}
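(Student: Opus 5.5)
We will identify $\pi_{\omega,p}$ as a Langlands quotient of a Siegel-induced representation, and then apply Lemma~\ref{lem: geometric lemma} to that induced representation.

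\textbf{Step 1: identifying the local component.} Since $\pi_{\omega}$ is unramified at $p$, so are $\mu_p$ and $\omega^p$. Write $\mu_p = i_{B_2}^{\GL_2}(\chi\otimes\chi^{-1})$ with $\chi$ unramified; this uses that $\mu$ has trivial central character. Generic irreducibility of $\mu_p$ forces $\chi^2\neq\lvert\cdot\rvert_p^{\pm1}$. The Ramanujan bound for the holomorphic newform $f$ gives $\chi$ unitary, which excludes $\lvert\cdot\rvert_p^{\pm3/2}$ and $\lvert\cdot\rvert_p^{\pm1}\xi$. By Schmidt's construction \cite[\S 3, \S 4]{Sch05}, for $p\notin S_{\mathrm{Sch}}$ the local component is $L(\lvert\cdot\rvert_p^{1/2}\mu_p\rtimes\lvert\cdot\rvert_p^{-1/2})$, and twisting by $\omega^p\circ\nu$ gives the displayed form. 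We then check the identity
\[
L(\lvert\cdot\rvert_p^{1/2}\mu_p\rtimes\lvert\cdot\rvert_p^{-1/2}\omega^p) = \chi 1_{\GL_2}\rtimes\chi^{-1}\omega^p
\]
using the Sally--Tadi\'c classification. The representation $\chi\lvert\cdot\rvert_p\times\chi^{-1}\rtimes\lvert\cdot\rvert_p^{-1/2}\omega^p$ (type IIb in Roberts--Schmidt) has constituents $\chi\mathrm{St}_{\GL_2}\rtimes\sigma$ and $\chi 1_{\GL_2}\rtimes\sigma$. The exclusions on $\chi$ are exactly the conditions under which $\chi1_{\GL_2}\rtimes\chi^{-1}\omega^p$ is irreducible, and it is the spherical constituent.

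\textbf{Step 2: the Jacquet module.} Apply Lemma~\ref{lem: geometric lemma} to the Siegel induction $\chi 1_{\GL_2}\rtimes\sigma$, where $\sigma=\chi^{-1}\omega^p$ and the inducing datum is the character $\psi=(\chi\circ\det)\otimes\sigma$ of $L(P)$. The Jacquet module of $\chi\circ\det$ to the torus of $\GL_2$ is $\chi\lvert\cdot\rvert^{-1/2}\otimes\chi\lvert\cdot\rvert^{1/2}$; this is the $\delta_{B_2}^{-1/2}$ twist. Hence the restriction to $T$ is $\psi' = \chi\lvert\cdot\rvert^{-1/2}\otimes\chi\lvert\cdot\rvert^{1/2}\otimes\sigma$, which is $\psi_1$.

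Next compute $\psi'\cdot w$ for $w\in\{\mathrm{id},s_2,s_2s_1,s_2s_1s_2\}$, using the explicit action on $[t_1,t_2;u]$:
\begin{itemize}
\item $s_2$ sends $(\chi_1,\chi_2,\sigma)\mapsto(\chi_1,\chi_2^{-1},\sigma\chi_2)$;
\item $s_1$ swaps $\chi_1$ and $\chi_2$.
\end{itemize}
Successive application gives $\psi_2,\psi_3,\psi_4$ as listed. This is a routine check; we will verify the ordering against the conventions $(\psi'\cdot w)(t)=\psi'(wtw^{-1})$. Since $\pi$ is irreducible and unramified, all four characters are unramified, so every constituent is an accessible refinement.

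\textbf{Step 3: distinctness and semisimplicity.} Compare the first two coordinates of the $\psi_i$. For example, $\psi_1=\psi_4$ iff $\chi=\chi^{-1}$, and $\psi_2=\psi_3$ iff $\chi^2=1$. The remaining pairs differ by a factor $\lvert\cdot\rvert^{\pm1}$ unless $\chi^{\pm2}=\lvert\cdot\rvert^{\pm1}$, which is excluded. So the four characters are pairwise distinct iff $\chi^2\neq1$, and when $\chi^2=1$ the coincidences are $(\psi_1,\psi_2)=(\psi_4,\psi_3)$.

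For semisimplicity: $J_B$ is a finite-length $T$-module, and its generalized eigenspaces for distinct characters split. When the four characters are distinct, each occurs with multiplicity one, so the module is a direct sum of characters.

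\textbf{Main obstacle.} The delicate step is Step 1: matching Schmidt's description with the stated irreducibility conditions and confirming that the Langlands quotient equals the full induced $\chi1_{\GL_2}\rtimes\sigma$, rather than a proper quotient, so that Lemma~\ref{lem: geometric lemma} gives exactly four constituents. I would first try to settle this with the Roberts--Schmidt reducibility table for IIb.
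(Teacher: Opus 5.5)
Your proposal is correct and follows the paper's proof: Schmidt's local description plus Sally--Tadi\'c irreducibility gives $\pi_{\omega,p}=\chi 1_{\GL_2}\rtimes\chi^{-1}\omega^p$, then Lemma~\ref{lem: geometric lemma} is applied to $(\chi\circ\det)\otimes\chi^{-1}\omega^p$ with the $s_1,s_2$ actions you state, and semisimplicity follows from multiplicity-freeness (your use of $\GL_2$-irreducibility and Deligne's bound to get $\chi^2\neq\lvert\cdot\rvert_p^{\pm1}$ and $\chi\neq\lvert\cdot\rvert_p^{\pm3/2}$, which is what Sally--Tadi\'c's criterion needs, is more explicit than the paper). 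One slip to fix: $\lvert\cdot\rvert_p^{1/2}\mu_p\rtimes\lvert\cdot\rvert_p^{-1/2}\omega^p$ equals $\chi\lvert\cdot\rvert_p^{1/2}\times\chi^{-1}\lvert\cdot\rvert_p^{1/2}\rtimes\lvert\cdot\rvert_p^{-1/2}\omega^p$, not $\chi\lvert\cdot\rvert_p\times\chi^{-1}\rtimes\lvert\cdot\rvert_p^{-1/2}\omega^p$, and it is $s_2$-conjugate to the Group~II principal series $\chi\lvert\cdot\rvert_p^{1/2}\times\chi\lvert\cdot\rvert_p^{-1/2}\rtimes\chi^{-1}\omega^p$, whose constituents are IIa and IIb.
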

\begin{proof}
The expression for $\pi_{\omega, p}$ follows by the proof of \cite[Lem.~2.2]{Sch05}, noting that, by \cite[Rem.~3.2(c)]{Sch05}, the twist by $\omega$ gives $ \pi_{\omega,p} = \chi\,1_{\GL_2}\rtimes\chi^{-1}\omega^p$, together with the irreducibility conditions $\chi \notin \{\lvert\cdot\rvert_p^{\pm 3/2},\lvert\cdot\rvert_p^{\pm 1}\xi\}$ for $\pi = \chi\,1_{\GL_2}\rtimes\sigma$ of \cite[Lem.~3.3, Lem.~3.7]{ST93}. Applying Lemma~\ref{lem: geometric lemma} to $\psi = \chi1_{\GL_2} \otimes \chi^{-1}\omega^p \in \Alg(L(P))$ gives precisely the four filtered pieces of $J_B(\pi_{\omega, p})$, noting that $s_1 \cdot [a, b; c] = [b, a; c]$ and $s_2 \cdot [a, b; c]=[a, cb^{-1};c]$ for $t=\mathrm{diag}(a,b,cb^{-1},ca^{-1})=[a,b;c]\in T$. Since the unramified characters $\chi$, $\omega^p$ and $\lvert\cdot\rvert_p$ are trivial on $T(\Z_p)$, these are precisely the accessible refinements of $\pi_{\omega, p}$. They are pairwise distinct if and only if $\chi^2\neq1$ (equivalent to the regularity of $\chi\lvert\cdot\rvert_p^{1/2}\otimes\chi\lvert\cdot\rvert_p^{-1/2}\otimes\chi^{-1}\omega^p$ by \cite[Lem.~3.3]{ST93}), in which case $J_B(\pi_{\omega,p})$ is multiplicity-free and hence semisimple. 
\end{proof}

\section{The Hecke algebra and Hecke operators}
\label{s: The Hecke algebra and Hecke operators}

\begin{defn}
\label{def: local Hecke algebra}
Let $A$ be a commutative ring, $v$ a finite place of a number field $F$, and $G/\mathcal{O}_{F_v}$ a
split reductive group. For each compact open subgroup $K_v \subset G(F_v)$, the \emph{local Hecke algebra}
$H(G(F_v),K_v,A)$ is the $A$-algebra of $K_v$ bi-invariant, locally constant, compactly supported functions $G(F_v)\to A$, with multiplication given by convolution.
\end{defn}

\subsection{Atkin--Lehner theory}
\label{subsec: AL}
We collect the results on Iwahori Hecke algebras that we require, following \cite[\S2.4]{Box+21}.  Let $v$ be a finite place of a number field $F$, with residue field $k(v)$ of cardinality $q_v$, and fix a uniformiser $\varpi_v\in\mathcal{O}_{F_v}$.  Let $G/\mathcal{O}_{F_v}$ be a split reductive group with Borel $B$ and maximal torus $T$. Let $E/\Q_p$ be a finite extension with ring of integers $\mathcal{O}_E$ and residue field $k$, and fix a square root $q_v^{1/2}\in E$. Let $\mathrm{Iw}(v)$ denote Iwahori of $G(F_v)$, the preimage of $B(k(v))$ in $G(\mathcal{O}_{F_v})$. Set $\mathcal{H}_v^{\mathrm{Iw}} := H(G(F_v),\mathrm{Iw}(v), \mathcal{O}_E)$ and $\mathcal{H}_v^{\mathrm{Iw}}[1/p] := 
\mathcal{H}_v^{\mathrm{Iw}} \otimes_{\mathcal{O}_E} \mathcal{O}_E[1/p]$. For $g\in G(F_v)$, write $[\mathrm{Iw}(v)g\mathrm{Iw}(v)]$ for the characteristic function of the double coset in $\mathcal{H}_v^{\mathrm{Iw}}$. Let $\Delta\subset X^\ast(T)$ be the set of simple roots of the Weyl group of $(G, T)$, and let
\[
T(F_v)^+
  = \{t\in T(F_v)\mid \alpha(t)\in\mathcal{O}_{F_v}\ \forall\,\alpha\in\Delta\}.
\]
\begin{defn}
\label{def:AL}
Let $U\subset T(F_v)$ be the subgroup of diagonal elements whose entries are integral powers of $\varpi_v$ and $U^- = U \cap T(F_v)^+$. The \emph{Atkin--Lehner monoid algebra} $\mathcal{A}_v^-\subset\mathcal{H}_v^{\mathrm{Iw}}$ is the $\mathcal{O}_E$-subalgebra generated by $[\mathrm{Iw}(v)u\mathrm{Iw}(v)]$ for $u\in U^-$. After inverting $p$, these elements become invertible in $\mathcal{H}_v^{\mathrm{Iw}}[1/p]$. The \emph{Atkin--Lehner algebra} $\mathcal{A}_v \subset \mathcal{H}_v^{\mathrm{Iw}}[1/p]$ is the $\mathcal{O}_E[1/p]$-subalgebra generated by $[\mathrm{Iw}(v)u\mathrm{Iw}(v)]^{\pm1}$ for $u\in U^-$. 
\end{defn}
\noindent 
The map $u\mapsto[\mathrm{Iw}(v)u\mathrm{Iw}(v)]$ extends uniquely to ring isomorphisms
\[\mathcal{O}_E[U^-]\xrightarrow{\sim}\mathcal{A}_v^-,
\qquad
\mathcal{O}_E[U]\xrightarrow{\sim}\mathcal{A}_v.
\]
Thus any $\mathcal{A}_v$-module may be viewed as a $U$-module, with $u\in U^-$ acting via $[\mathrm{Iw}(v)u\mathrm{Iw}(v)]$.

\begin{prop}[{\cite[Prop.~2.4.2]{Box+21}}]
\label{prop:H1-hom}
For $x,y\in T(F_v)^+$,
\[
  [\mathrm{Iw}(v)x\mathrm{Iw}(v)]\cdot[\mathrm{Iw}(v)y\mathrm{Iw}(v)]
  = [\mathrm{Iw}(v)xy\mathrm{Iw}(v)].
\]
\noindent
Hence there is a well-defined group homomorphism $\theta_v : T(F_v)\to \mathcal{H}_v^{\mathrm{Iw}}[1/p]^\times$ defined as follows: if $x=yz^{-1}$ with $y,z\in T(F_v)^+$, then
\[
  \theta_v(x)
  = \delta_B^{1/2}(y)[\mathrm{Iw}(v)y\mathrm{Iw}(v)]\cdot
    \delta_B^{-1/2}(z)[\mathrm{Iw}(v)z\mathrm{Iw}(v)]^{-1},
\]
where $\delta_B$ is the modulus character of $B$. The kernel of $\theta_v$ is $T(\mathcal{O}_{F_v})$. 
\end{prop}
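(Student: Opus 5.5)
The plan is to reduce Proposition~\ref{prop:H1-hom} to the Bruhat--Iwahori decomposition and a measure computation, in the standard way for Iwahori--Hecke algebras. Write $I=\mathrm{Iw}(v)$ and let $N=N_B$, with opposite unipotent radical $\overline{N}$. The Iwahori factorisation gives
\[
I=(I\cap N)\,T(\mathcal{O}_{F_v})\,(I\cap\overline{N}),
\]
where $I\cap N=N(\mathcal{O}_{F_v})$ and $I\cap \overline{N}$ consists of the lower unipotents congruent to $1$ mod $\varpi_v$. For $x\in T(F_v)^+$, conjugation by $x$ satisfies $x N(\mathcal{O}_{F_v})x^{-1}\subseteq N(\mathcal{O}_{F_v})$ and $x^{-1}(I\cap\overline{N})x\subseteq I\cap\overline{N}$, because $\alpha(x)$ is integral for every positive root $\alpha$.

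From this I will deduce the coset decomposition
\[
IxI=\bigsqcup_{n\in N(\mathcal{O}_{F_v})/xN(\mathcal{O}_{F_v})x^{-1}} n x I,
\]
so that the number of cosets is $[N(\mathcal{O}_{F_v}):xN(\mathcal{O}_{F_v})x^{-1}]=\delta_B(x)^{-1}$. Composing the two decompositions gives
\[
IxI\cdot IyI=\bigcup_{n,n'} n x n' y I .
\]
Each such element lies in $IxyI$, since $n x n' y = n\,(x n' x^{-1})\,xy$ with $x n' x^{-1}\in N(\mathcal{O}_{F_v})$. The number of pairs $(n,n')$ is $\delta_B(x)^{-1}\delta_B(y)^{-1}=\delta_B(xy)^{-1}$, which matches the number of cosets in $IxyI$. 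It remains to show that the map $(n,n')\mapsto n\,xn'x^{-1}$ from pairs to $N(\mathcal{O}_{F_v})/xyN(\mathcal{O}_{F_v})(xy)^{-1}$ is a bijection. This follows from the chain of indices
\[
N(\mathcal{O}_{F_v})\supseteq xN(\mathcal{O}_{F_v})x^{-1}\supseteq xyN(\mathcal{O}_{F_v})(xy)^{-1}.
\]
Consequently the convolution $[IxI]*[IyI]$ is supported on $IxyI$, each coset occurs with multiplicity one, and the product equals $[IxyI]$. This counting step, and in particular getting the $I\cap\overline{N}$ part to be absorbed on the right, is the main technical point. I would verify it carefully using $x^{-1}(I\cap\overline N)x\subseteq I$.

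For $\theta_v$, I first observe that the twisted map $y\mapsto \delta_B^{1/2}(y)[IyI]$ is multiplicative on the monoid $T(F_v)^+$. This holds because $\delta_B$ is a character and by the multiplicativity just proved. The elements $[IyI]$ are invertible in $\mathcal{H}_v^{\mathrm{Iw}}[1/p]$: choose a central $z\in T(F_v)^+$ (for example a power of the similitude cocharacter) such that $z y^{-1}\in T(F_v)^+$. Then $[IyI][Izy^{-1}I]=[IzI]$, and $[IzI]$ is invertible because $z$ is central, using $[IzI]\,[Iz^{-1}I]=1$. Since $T(F_v)=T(F_v)^+\,(T(F_v)^+)^{-1}$, the standard Grothendieck-group argument shows that $\theta_v$ is well defined and a homomorphism: if $yz^{-1}=y'z'^{-1}$, then $yz'=y'z$, and the value of $\theta_v$ does not depend on the representation.

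Finally, I will show that the kernel of $\theta_v$ is $T(\mathcal{O}_{F_v})$. For $t\in T(\mathcal{O}_{F_v})$ we have $ItI=I$ and $\delta_B(t)=1$, so $\theta_v(t)=1$. Conversely, write $x=u\,t_0$ with $u\in U$ and $t_0\in T(\mathcal{O}_{F_v})$. Then $\theta_v(x)$ is, up to a scalar, a product of the basis elements $[IyI]$ and their inverses. Via the isomorphism $\mathcal{O}_E[U]\cong\mathcal{A}_v$ recalled above, the elements $\theta_v(u)$ for $u\in U$ are linearly independent scalar multiples of group elements. Hence $\theta_v(u)=1$ forces $u=1$.
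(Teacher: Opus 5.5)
The paper gives no proof of this statement; it simply cites \cite[Prop.~2.4.2]{Box+21}. Three parts of your argument are correct and are the standard argument behind that result: the product formula (via $\mathrm{Iw}(v)x\mathrm{Iw}(v)=\bigsqcup_{n} n x\,\mathrm{Iw}(v)$ and the coset-representative bijection along $N(\mathcal{O}_{F_v})\supseteq xN(\mathcal{O}_{F_v})x^{-1}\supseteq xyN(\mathcal{O}_{F_v})(xy)^{-1}$), the Grothendieck-group construction of $\theta_v$, and the kernel computation. There is one genuine gap: your proof that $[\mathrm{Iw}(v)y\mathrm{Iw}(v)]$ is invertible in $\mathcal{H}_v^{\mathrm{Iw}}[1/p]$. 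You need this for $\theta_v$ to make sense as a map to $\mathcal{H}_v^{\mathrm{Iw}}[1/p]^\times$, and your argument for it fails.

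The failure is as follows. If $z$ is central, then $\alpha(z)=1$ for every root, so $\alpha(zy^{-1})=\alpha(y)^{-1}$. Hence $zy^{-1}\in T(F_v)^+$ forces $\alpha(y)\in\mathcal{O}_{F_v}^\times$ for all $\alpha\in\Delta$, that is, $\mathrm{Iw}(v)y\mathrm{Iw}(v)=y\,\mathrm{Iw}(v)$. Your $z$ therefore exists only in the trivial case. Nothing inside the monoid $\{[\mathrm{Iw}(v)y\mathrm{Iw}(v)] : y\in T(F_v)^+\}$ can rescue this. By your own product formula, the coset count $\delta_B(\cdot)^{-1}\ge 1$ is multiplicative, so no product of such elements equals $[\mathrm{Iw}(v)]$ unless every factor is trivial.

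A warning sign is that your argument never uses that $p$ has been inverted. If it worked, it would produce an inverse in $\mathcal{H}_v^{\mathrm{Iw}}$ itself. But for $v\mid p$, consider the algebra character $\mathcal{H}_v^{\mathrm{Iw}}\to\mathcal{O}_E$ given by $[\mathrm{Iw}(v)g\mathrm{Iw}(v)]\mapsto \#\bigl(\mathrm{Iw}(v)g\mathrm{Iw}(v)/\mathrm{Iw}(v)\bigr)$, which is the action on the trivial representation. It sends $[\mathrm{Iw}(v)y\mathrm{Iw}(v)]$ to $\delta_B(y)^{-1}$, a positive power of $q_v$ for nontrivial $y$. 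So these elements are genuinely not units before inverting $p$.

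There are two ways to close the gap. You can take invertibility as given: it is asserted in Definition~\ref{def:AL} and proved in the source. Your kernel argument in any case already relies on the isomorphism $\mathcal{O}_E[U]\simeq\mathcal{A}_v$, which presupposes it. Alternatively, you can prove it from the Iwahori--Matsumoto presentation:
\begin{itemize}
\item Write the image of $y$ in the extended affine Weyl group as a reduced product $s_{i_1}\cdots s_{i_r}\omega$ with $\omega$ of length zero.
\item By length additivity, $[\mathrm{Iw}(v)y\mathrm{Iw}(v)]=T_{s_{i_1}}\cdots T_{s_{i_r}}T_\omega$.
\item $T_\omega$ is a unit, with inverse $T_{\omega^{-1}}$.
\item The quadratic relation $(T_s-q_v)(T_s+1)=0$ gives $T_s^{-1}=q_v^{-1}\bigl(T_s-(q_v-1)\bigr)$.
\item This inverse exists in $\mathcal{H}_v^{\mathrm{Iw}}[1/p]$ because $q_v$ is invertible in $\mathcal{O}_E[1/p]$: it is a power of $p$ if $v\mid p$, and already a unit of $\mathcal{O}_E$ otherwise.
\end{itemize}
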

\noindent
Note that the action of $T(F_v)$ restricts to give the action of $U$ defined above. If $\pi$ is a smooth admissible $G(F_v)$-representation, then the action of $\mathcal{H}_v^{\mathrm{Iw}}$ on $\pi^{\mathrm{Iw}(v)}$ together with $\theta_v$ equips $\pi^{\mathrm{Iw}(v)}$ with a natural $\overline{E}[T(F_v)]$-module structure: for $t\in T(F_v)$ and $\varphi\in\pi^{\mathrm{Iw}(v)}$, $t\cdot\varphi := \theta_v(t)\cdot\varphi$. Via the natural map $\pi\to J_B(\pi)$, the same construction endows $(J_B(\pi))^{T(\mathcal{O}_{F_v})}$ with a compatible $\overline{E}[T(F_v)]$-module structure. 

\begin{lemma}
\label{lem:Iw-Jac}
Let $\pi$ be a smooth admissible $\overline{E}$-representation of $G(F_v)$. Then there is a canonical isomorphism of $\overline{E}[T(F_v)]$-modules
\[
  \pi^{\mathrm{Iw}(v)} \;\simeq\; (J_B(\pi))^{T(\mathcal{O}_{F_v})}.
\]
\end{lemma}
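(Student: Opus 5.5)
The plan is to prove this as the standard Borel–Casselman statement, in three steps: construct the map, show it is bijective, and then check that it intertwines the $T(F_v)$-actions, with $\theta_v$ on one side and the natural action on the other.

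\textbf{The map.} Let $N$ be the unipotent radical of $B$ and $\overline{N}$ the opposite unipotent radical. Use the Iwahori factorisation
\[
\mathrm{Iw}(v) = N_0\, T_0\, \overline{N}_1, \qquad N_0 = N(\mathcal{O}_{F_v}),\quad T_0 = T(\mathcal{O}_{F_v}),\quad \overline{N}_1 = \overline{N}(\varpi_v\mathcal{O}_{F_v}).
\]
The natural projection $\pi \to J_B(\pi)$ is a quotient by the span of the vectors $n\varphi - \varphi$ with $n \in N(F_v)$, followed by the normalising twist $\delta_B^{-1/2}$. Restricted to $\pi^{\mathrm{Iw}(v)}$, it lands in $J_B(\pi)^{T_0}$.

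\textbf{Bijectivity.} Apply Casselman's canonical lifting theorem (Casselman's notes, Prop.~4.1.4 / Borel, ``Admissible representations with vectors fixed under an Iwahori subgroup''): for any compact open $K$ with an Iwahori factorisation, the projection $\pi^{K} \to J_B(\pi)^{K\cap T}$ is surjective. For injectivity, take $\varphi \in \pi^{\mathrm{Iw}(v)}$ with image zero. Then $\varphi$ is killed by integration over some large $N$-compact, namely $t^{-1}N_0t$ for $t\in U^-$ sufficiently contracting. This means the Hecke operator $[\mathrm{Iw}(v)t\mathrm{Iw}(v)]$, computed as $\sum_{n\in N_0/tN_0t^{-1}} nt\varphi$ on $\mathrm{Iw}(v)$-invariants, kills $\varphi$. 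By Definition~\ref{def:AL} and Proposition~\ref{prop:H1-hom}, these operators are invertible after inverting $p$. Since $\overline{E}$ has characteristic zero, this forces $\varphi=0$. This is the key point where the structure of $\mathcal{A}_v$ enters.

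\textbf{Equivariance.} Compare the two actions of $t\in T(F_v)^+$ (contracting $N$). On $\pi^{\mathrm{Iw}(v)}$, the double coset decomposes as
\[
\mathrm{Iw}(v)\,t\,\mathrm{Iw}(v) = \bigsqcup_{n\in N_0/tN_0t^{-1}} n\,t\,\mathrm{Iw}(v),
\]
using the Iwahori factorisation and $t\overline{N}_1t^{-1}\subset \overline{N}_1$. Its image in the unnormalised Jacquet module is therefore $[N_0:tN_0t^{-1}]\cdot t\bar\varphi$. The index is $\delta_B(t)^{-1}$, with the sign convention fixed by checking that $t$ contracts $N$. Combining the $\delta_B^{1/2}$ in the definition of $\theta_v$ with the $\delta_B^{-1/2}$ normalisation of $J_B$, we get $\theta_v(t)\varphi \mapsto t\cdot\bar\varphi$ for $t\in T(F_v)^+$. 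Since $T(F_v)^+$ generates $T(F_v)$ as a group and both actions are group actions (Proposition~\ref{prop:H1-hom}), equivariance extends to all of $T(F_v)$.

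\textbf{Main obstacle.} I expect the hardest part to be the normalisation bookkeeping in the equivariance step: matching the power of $\delta_B$ and checking that the dominance convention $T(F_v)^+$ (simple roots integral) really corresponds to $t$ contracting $N_0$. If the conventions turned out mismatched, I would replace $t$ by $t^{-1}$, or use the opposite Jacquet module, and re-verify.
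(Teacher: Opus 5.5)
Your proposal is correct and takes the same route as the paper, whose proof just cites Casselman's Lemma~4.1.1 and Proposition~4.1.4 together with Proposition~\ref{prop:H1-hom}; you are unpacking those inputs: Jacquet's lemma for surjectivity, invertibility of $[\mathrm{Iw}(v)t\mathrm{Iw}(v)]$ after inverting $p$ for injectivity, and the coset decomposition with the $\delta_B^{\pm1/2}$ bookkeeping for equivariance, which comes out consistent with the paper's normalisation of the $u_{v,i}$. One small slip: for $t\in T(F_v)^+$, conjugation by $t$ expands $\overline{N}$, so the containment that gives $\overline{N}_1t\subset t\,\mathrm{Iw}(v)$, and hence $\mathrm{Iw}(v)t\mathrm{Iw}(v)=\bigsqcup_{n\in N_0/tN_0t^{-1}}nt\,\mathrm{Iw}(v)$, is $t^{-1}\overline{N}_1t\subset\overline{N}_1$, not $t\overline{N}_1t^{-1}\subset\overline{N}_1$.
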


\begin{proof}
This follows from \cite[Lem.~4.1.1, Prop.~4.1.4]{Cas74} and Proposition~\ref{prop:H1-hom}.
\end{proof}

\subsection{The Hecke operators for $\GSp_4$} \label{s: The Hecke operators}
We define the Hecke operators that generate the Hecke algebra for $\GSp_4$. Let $F$ be a number field and let $p$ be a rational prime. For each finite place $v$ of $F$, fix a uniformiser $\varpi_v$. Let $T$ and $B$ be the torus and Borel of \S\ref{s: lie theory}. For $t=[a,b;c]=\mathrm{diag}(a,b,cb^{-1},ca^{-1}) \in T(F_v)$, set
\[
m_{v,0}:=[\varpi_v,\varpi_v;\varpi_v^2],\qquad
m_{v,1}:=[\varpi_v,\varpi_v;\varpi_v],\qquad
m_{v,2}:=[\varpi_v^2,\varpi_v;\varpi_v^2].
\]

\subsubsection{The spherical Hecke operators}
Let $A$ be a commutative ring. For $v \nmid p$ the \emph{spherical Hecke operators} $T_{v,i}: = [\GSp_4(\mathcal{O}_{F_v})m_{v,i} \GSp_4(\mathcal{O}_{F_v})]$, the characteristic functions of the respective double cosets, generate the spherical Hecke algebra $\mathcal{H}_v:=H(G(F_v), \GSp_4(\mathcal{O}_{F_v}), A)$ by \cite[\S 3.1.5]{Pil20}. They are independent of the choice of uniformiser $\varpi_v$.
\subsubsection{Hecke parameters}
If $\pi$ is an unramified representation of $\GSp_4(F_v)$, then $\pi$ is a constituent of an unramified principal series and the characteristic polynomial of $\mathrm{rec}_{\mathrm{GT},p}(\pi \otimes |\nu|^{-3/2})(\mathrm{Frob}_v)$ is given by
\begin{equation*}
    Q_v(X) := X^4 - t_{v,1}X ^3 + (q_v t_{v,2} + (q^3_v+q_v)t_{v,0})X^2 -q^3_v t_{v,0} t_{v,1} X + q^6_v t^2_{v,0},
\end{equation*}
where $t_{v,i}$ is the $T_{v,i}$-eigenvalue on $\pi^{G(\mathcal{O}_{F_v})}$.

\begin{prop}[{\cite[Prop.~2.4.6]{Box+21}}] \label{prop: uv and Hecke}
    If $v$ is a finite place of a number field $F$ and $\pi$ is an irreducible constituent of an unramified principal series $\chi_1 \times \chi_2 \rtimes \sigma$ of $\GSp_4(F_v)$, then 
    \begin{equation*}
        \mathrm{rec}_{\mathrm{GT},p}(\pi)^{ss} = \sigma \circ \Art_{F_v}^{-1} \otimes \left((\chi_1\chi_2)\circ\Art_{F_v}^{-1} \;\oplus\; 
      \chi_1\circ\Art_{F_v}^{-1} \;\oplus\;
      \chi_2\circ\Art_{F_v}^{-1} \;\oplus\; 1\right),
    \end{equation*}
    where, for a finite extension $K/\mathbb{Q}_p$, $\Art_{K}$ denotes the Artin map $\Art_K: K^\times \xrightarrow{\sim} W^\mathrm{ab}_K$, normalised to send uniformisers to geometric Frobenii.
\end{prop}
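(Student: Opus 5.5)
This is a citation of \cite[Prop.~2.4.6]{Box+21}, but I would verify it directly from the Gan--Takeda description of the unramified $L$-parameter.

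The first step is a reduction to the generic constituent. The map $\mathrm{rec}_{\mathrm{GT}}$ assigns the same semisimplified parameter to all irreducible constituents of a given principal series, since they share the same supercuspidal support. It therefore suffices to compute the parameter of the unramified principal series $\chi_1\times\chi_2\rtimes\sigma$ itself, or of any one of its constituents.

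The second step is to identify that parameter through the dual torus. By Satake theory, the unramified character $\chi_1\otimes\chi_2\otimes\sigma$ of $T(F_v)$ corresponds under local Langlands for tori to a homomorphism $W_{F_v}\to \widehat T$. Composing with $\widehat T\subset\widehat G=\GSpin_5\simeq\GSp_4$ gives the unramified $L$-parameter, and Gan--Takeda's construction is compatible with this. Concretely, I would use the cocharacter dictionary of \S\ref{s: lie theory}. The character $\chi_1\otimes\chi_2\otimes\sigma$ is $[t_1,t_2;\nu]\mapsto\chi_1(t_1)\chi_2(t_2)\sigma(\nu)$. Under the spin identification, the basis characters $(1,0;1)$, $(0,1;1)$ and $(0,0;2)$ correspond respectively to the cocharacters $\mathrm{diag}(t,t,1,1)$, $\mathrm{diag}(t,1,t,1)$ and $\mathrm{diag}(t,t,t,t)$. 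Pairing $\chi_1,\chi_2,\sigma$ against these gives the diagonal entries $\sigma\chi_1\chi_2$, $\sigma\chi_1$, $\sigma\chi_2$, $\sigma$ after composing with $\Art_{F_v}^{-1}$. This is exactly the displayed formula.

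The third step is a consistency check on the normalisations. The similitude $\nu\circ\mathrm{rec}$ should be the central character. Here it is $\sigma^2\chi_1\chi_2$, and the central character of $\chi_1\times\chi_2\rtimes\sigma$ at $\mathrm{diag}(z,z,z,z)=[z,z;z^2]$ is indeed $\chi_1\chi_2\sigma^2$. The twisting property also holds, since replacing $\sigma$ by $\sigma\chi$ tensors the whole parameter by $\chi$. Together these match conditions (ii) of \S\ref{s: The non-archimedean Local Langlands correspondence}.

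The main obstacle is the bookkeeping: matching the paper's spin-isomorphism convention and the geometric-Frobenius Artin normalisation with those of \cite{GT11a}, so that no $\chi_i\mapsto\chi_i^{-1}$ inversion or Weyl twist creeps in. The displayed formula is invariant under the Weyl group, which only permutes and inverts entries in a way preserved by the $\sigma$-twist, so only the overall inversion needs checking. I would settle it by comparing with the characteristic polynomial $Q_v$ and the Hecke eigenvalue $t_{v,0}$ on $m_{v,0}$, or simply cite \cite{Box+21}.
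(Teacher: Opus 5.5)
The paper gives no proof of this statement. It imports it from \cite[Prop.~2.4.6]{Box+21}, whose conventions it follows, so your sketch is an independent verification, and it is correct.

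Your cocharacter computation against the dictionary of \S\ref{s: lie theory} checks out. The characters $(1,0;1)$, $(0,1;1)$, $(0,0;2)$ are the coordinate characters $t_1,t_2,\nu$ and form a $\Z$-basis of $X^*(T)$. They go to $\mathrm{diag}(t,t,1,1)$, $\mathrm{diag}(t,1,t,1)$ and $\mathrm{diag}(t,t,t,t)$, which gives $\mathrm{diag}(\sigma\chi_1\chi_2,\sigma\chi_1,\sigma\chi_2,\sigma)$ after composing with $\Art_{F_v}^{-1}$.

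Citing hands the matching of conventions to \cite{Box+21}. Your route makes it explicit, but it still rests on one black box, hidden in your sentence ``Gan--Takeda's construction is compatible with this''. That input is: on every constituent of a normalised induction $i_B^G\theta$, the $W_{F_v}$-part of $\mathrm{rec}_{\mathrm{GT}}$ is the torus parameter of $\theta$ pushed into $\widehat G$. Steps~1 and~2 both depend on it. Equality of supercuspidal supports does not give it by itself; you have to quote it from \cite{GT11a}'s description of the parameters of non-supercuspidal representations.

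Your consistency checks cannot replace that input.
\begin{itemize}
\item The similitude check in Step~3 already settles the ``overall inversion'' you worry about at the end. The contragredient, or equivalently an arithmetic-Frobenius normalisation, would have similitude equal to the inverse of the central character. The Weyl group is harmless, as you say.
\item Neither the similitude check nor the twisting check detects a shift by $\delta_B^{\pm1/2}$ coming from unnormalised induction. The torus parameter of $\delta_B^{1/2}$ is $\mathrm{diag}(|\cdot|^{3/2},|\cdot|^{1/2},|\cdot|^{-1/2},|\cdot|^{-3/2})$, which has trivial similitude. So this normalisation comes only from the cited compatibility.
\item Comparing with $Q_v$ would be circular here, because the paper deduces the roots of $Q_v$ from this very proposition.
\end{itemize}
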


\subsubsection{The Iwahori Hecke operators at $p$}
\label{s:Iwahori}
For $v\mid p$, let $K_v=\GSp_4(\mathcal{O}_{F_v})$ be a hyperspecial maximal compact subgroup containing the Iwahori subgroup $\Iw(v)$. We define the \emph{Iwahori Hecke operators}
\[
U_{v,i}=[\Iw(v)\,m_{v,i}\,\Iw(v)]\in\mathcal{H}_v^{\mathrm{Iw}},\qquad i=0,1,2.
\]
The $U_{v,i}$ generate $\mathcal{A}_v^-$, and, after inverting $p$, the $U_{v,i}$ and their inverses generate the Atkin--Lehner algebra $\mathcal{A}_v$. Let $\chi_1\otimes\chi_2\otimes\sigma$ be an accessible refinement of an unramified representation $\pi$ of $\GSp_4(F_v)$ (cf. Definition~\ref{def: refinement-local}). The operators $U_{v,i}$ act on $\pi^{\Iw(v)}$ via the isomorphism $\pi^{\Iw(v)} \simeq J_B(\pi)^{T(\mathcal{O}_{F_v})}$ of Lemma~\ref{lem:Iw-Jac}, and their eigenvalues $u_{v,i}$ are given by
\begin{equation*}
u_{v,i}=\delta_B(m_{v,i})^{-1/2}
(\chi_1\otimes\chi_2\otimes\sigma)(m_{v,i}).
\end{equation*}
By direct computation,
\begin{equation*}
u_{v,0}=(\chi_1\chi_2\sigma^2)(\varpi_v),\quad
u_{v,1}=q_v^{3/2}(\chi_1\chi_2\sigma)(\varpi_v),\quad
u_{v,2}=q_v^2(\chi_1^2\chi_2\sigma^2)(\varpi_v),
\end{equation*}
where $q_v$ is the cardinality of the residue field of $F_v$. In particular, since the roots of $Q_v(X)$ are $q_v^{3/2} \cdot \{(\chi_1\chi_2\sigma) (\varpi_v),(\chi_1\sigma)(\varpi_v),(\chi_2\sigma)(\varpi_v),\sigma(\varpi_v)\}$ by Proposition~\ref{prop: uv and Hecke}, an accessible refinement of $\pi$ determines an ordering of these roots
\begin{equation*}
    \bigl(u_{v,1},\ 
    u_{v,2} u_{v,1}^{-1} q_v,\ 
    u_{v,0} u_{v,2}^{-1}u_{v,1} q_v^2,\ 
    u_{v,0} u_{v,1}^{-1} q_v^3\bigr).
\end{equation*}

\begin{cor}
\label{cor:hecke-crys}
Let $\Pi$ be an automorphic representation of $\GSp_4(\A_\Q)$ of Galois type with central character $\phi$. If $\Pi_p$ is unramified, then an accessible refinement of $\Pi_p$ determines an ordering of the crystalline Frobenius eigenvalues of $\rho_\Pi|_{G_{\Q_p}}$, expressible in terms of the $u_{p, i}$-eigenvalues of $\Pi_p^{\Iw(p)}$.
\end{cor}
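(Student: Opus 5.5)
Since $\Pi$ is of Galois type with $\Pi_p$ unramified, the last clause of Definition~\ref{def: aut rep of Galois type} tells us that $\rho_\Pi|_{G_{\Q_p}}$ is crystalline. It also gives the equality $P^{\mathrm{cris}}_{\Pi,p}(X)=Q_{\Pi,p}(X)$. So the multiset of crystalline Frobenius eigenvalues on $\Dcris(\rho_\Pi|_{G_{\Q_p}})$ is the multiset of roots of the characteristic polynomial of $\mathrm{std}\circ\mathrm{rec}_{\mathrm{GT},p}(\Pi_p\otimes|\nu|_p^{-3/2})(\Frob_p)$. (The inverse characteristic polynomials agree exactly when the characteristic polynomials agree, since both have nonzero constant term; this is the polynomial $Q_p(X)$ of \S\ref{s: The Hecke operators}.) The plan is therefore to reduce everything to an explicit description of the roots of $Q_p(X)$ in terms of an accessible refinement, and then to a formula in the $U_{p,i}$-eigenvalues.

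First, $\Pi_p$ is unramified, so it is an irreducible constituent of an unramified principal series $\chi_1\times\chi_2\rtimes\sigma$. Lemma~\ref{lem:Iw-Jac} identifies $\Pi_p^{\Iw(p)}$ with $J_B(\Pi_p)^{T(\Z_p)}$ as $T(\Q_p)$-modules. Hence every accessible refinement $\psi$ (Definition~\ref{def: refinement-local}) is a character occurring in $\Pi_p^{\Iw(p)}$. Its $U_{p,i}$-eigenvalues are given by the formula $u_{p,i}=\delta_B(m_{p,i})^{-1/2}\psi(m_{p,i})$ of \S\ref{s:Iwahori}.

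Second, I would note that by the geometric lemma (Lemma~\ref{lem: geometric lemma}, or its Borel analogue) every such $\psi$ is a Weyl conjugate $(\chi_1\otimes\chi_2\otimes\sigma)\cdot w$ of the inducing character. Write $\psi=\chi_1'\otimes\chi_2'\otimes\sigma'$. Proposition~\ref{prop: uv and Hecke} depends only on the $W_G$-orbit of the inducing data, so it applies equally with $\psi$ in place of the original data. Twisting by $|\nu|_p^{-3/2}$, it shows that the roots of $Q_p(X)$ are
\[
p^{3/2}\cdot\{(\chi_1'\chi_2'\sigma')(p),\,(\chi_1'\sigma')(p),\,(\chi_2'\sigma')(p),\,\sigma'(p)\}.
\]
The direct computation of \S\ref{s:Iwahori} then expresses these four roots, in this order, as
\[
\bigl(u_{p,1},\ u_{p,2}u_{p,1}^{-1}p,\ u_{p,0}u_{p,2}^{-1}u_{p,1}p^2,\ u_{p,0}u_{p,1}^{-1}p^3\bigr).
\]
This step is elementary bookkeeping: substitute $u_{p,0}=(\chi_1'\chi_2'\sigma'^2)(p)$, $u_{p,1}=p^{3/2}(\chi_1'\chi_2'\sigma')(p)$ and $u_{p,2}=p^2(\chi_1'^2\chi_2'\sigma'^2)(p)$, and check each entry.

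Finally, transporting this ordered tuple through $\iota_p^{-1}$ and the equality $P^{\mathrm{cris}}=Q$ gives an ordering of the crystalline Frobenius eigenvalues of $\rho_\Pi|_{G_{\Q_p}}$ determined by $\psi$, which proves the corollary. The only point needing care is the normalisation of $\rho_\Pi$ against $\mathrm{rec}_{\mathrm{GT},p}(\Pi_p\otimes|\nu|_p^{-3/2})$ and the $q_v^{3/2}$ factor. This is consistent because $Q_v(X)$ is defined precisely with this twist, so no further obstacle is expected. When $\psi$ has repeated values, the ordering is still well defined as a tuple, though not necessarily injective.
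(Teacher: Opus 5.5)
Your proposal is correct and follows the same route as the paper. The paper's proof simply invokes the last clause of Definition~\ref{def: aut rep of Galois type} ($P^{\mathrm{cris}}_{\Pi,p}=Q_{\Pi,p}$) together with the ordering of the roots of $Q_v(X)$ by the $u_{v,i}$ computed just before the corollary in \S\ref{s:Iwahori}. You additionally make explicit why Proposition~\ref{prop: uv and Hecke} may be applied with the refinement itself as inducing data, via Weyl-invariance of the multiset of roots, which the paper leaves implicit; your check of the four entries is correct.
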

\begin{proof}
    This follows from the last point of Definition~\ref{def: aut rep of Galois type}.
\end{proof}

\section{\texorpdfstring{$p$}{p}-adic families} \label{s: family} 
\noindent
We adapt Bella\"iche's \cite{Bel10} definition of a $p$-adic family. 

\begin{defn}
\label{def: weight space}
Let $T$ and $Z_{\GSp_4}$ be the diagonal torus and centre of $\GSp_4$. The \emph{weight space} $\mathscr{W}$ is the rigid analytic space over $\Q_p$ such that $\mathscr{W}(\C_p)=\mathrm{Hom}_{\mathrm{cont}}\bigl(T(\Z_p),\C_p^\times\bigr)$. Let $\phi:\Q^\times\backslash\A_\Q^\times\to\C^\times$ be an algebraic Hecke character, and let $\tilde{\phi}:\Q^\times \backslash \A_\Q^\times\to\overline{\Q}_p^\times$ be the associated $p$-adic Hecke character\footnote{If $\phi$ is algebraic, then $\phi|_{\R_{>0}^\times}(z)=z^w$ for some $w\in\Z$, and $\tilde{\phi}(z)=\iota_p^{-1}(\phi(z)z_\infty^{-w})\,z_p^{\,w}$, where $z_\infty$ and $z_p$ denote the components of $z$ at $\infty$ and $p$ respectively. In particular, $\tilde{\phi}$ is continuous for the $p$-adic topology, since $\phi$ is continuous for the complex topology, so its local component $\tilde{\phi}_p:\Q_p^\times\to\overline{\Q}_p^\times$ is well-defined.}. 
We define the two-dimensional rigid subspace
\[
\mathscr{W}_\phi:
=
\{\chi\in\mathscr{W}:\chi|_{Z_{\GSp_4}(\Z_p)}=\tilde{\phi}_p[3]\}
\subset\mathscr{W}, \quad [3]: z \mapsto z^3
\]
\end{defn}

\begin{defn} \label{def: level and Hecke algebra}
Let $\A_f^p$ denote the finite ad\`eles away from $p$, and let $K^p=\prod_{\ell\neq p}K_\ell$ be a compact open subgroup of $\GSp_4(\A_f^p)$, the \emph{tame level}. Let $S$ be the finite set of primes $\ell$ such that either $\ell=p$ or $K_\ell$ is not maximal hyperspecial. Define the \emph{level} to be $K:=K^pK_p$, where $K_p=\GSp_4(\Z_p)$. The \emph{Hecke algebra} of level $K$ is $\mathcal{H}:=\left(\bigotimes_{\ell\notin S}\mathcal{H}_\ell\right)\otimes\mathcal{A}_p$, where $\mathcal{H}_\ell:=H(\GSp_4(\Q_\ell),K_\ell,\Z)$.
\end{defn}

\begin{defn} \label{def: refined rep}
A \emph{$p$-refined representation} $(\Pi,\psi_{\mathcal{R}})$ of level $K$ consists of a discrete automorphic representation $\Pi$ of $\GSp_4(\A_\Q)$ satisfying $\Pi^K\neq 0$, together with an accessible refinement $\psi_{\mathcal{R}}$ of $\Pi_p$. Such a pair determines a character $\psi_{\Pi,\mathcal{R}}= \iota_p^{-1}(\psi_\Pi)\otimes\psi_{\mathcal{R},p}:\mathcal{H}\to\overline{\Q}_p$\footnote{since $\Pi$ is of Galois type, $\psi_{\Pi}$ takes values in a number field so we may view $\psi_{\Pi}$ as a character $\mathcal{H}\to\overline{\Q}_p$ via $\iota_p:\overline
{\Q}_p \xrightarrow[]{\sim} \C$.}, where $\psi_\Pi:\bigotimes_{\ell\notin S}\mathcal{H}_\ell\to\C$ is the character giving the action of $\bigotimes_{\ell\notin S}\mathcal{H}_\ell$ on the one-dimensional space $\prod_{\ell\notin S}\Pi_\ell^{K_\ell}$ and $\psi_{\mathcal{R},p}:\mathcal{A}_p\to\overline
{\Q}_p$ is the character defined by $\psi_{\mathcal{R},p}\vert_U=\iota_p^{-1}(\psi_{\mathcal{R}}\cdot\delta_B^{-1/2}) \delta_{\underline{k}}$\footnote{The factor $\delta_B^{-1/2}$ arises from the definition of the map $T(\Q_p)\to\mathcal{H}_v^{\mathrm{Iw}}[1/p]^\times$ of Proposition~\ref{prop:H1-hom}.}, where, if $\lambda_{\Pi, \infty} = (a, b;c)$, then $\delta_{\underline{k}}= \lambda_{\Pi, \infty}- \delta_B^{1/2} = (a-2, b-1;c)$ (cf. Definition~\ref{def: aut rep of Galois type}).
\end{defn}

\begin{defn}
\label{def:p-adic-family-GSp4}
A \emph{$p$-adic family} $X(\phi)$ interpolating automorphic representations of $\GSp_4(\A_\Q)$ of Galois type and level $K$ with central character $\phi:\Q^\times\backslash\A_\Q^\times\to\C^\times$ consists of:
\begin{itemize}[itemsep=2pt,leftmargin=1.2em]
\item a reduced, separated, and irreducible rigid analytic space $X$ over $\Q_p$;
\item an algebraic Hecke character $\phi$, the \emph{central character} of $X(\phi)$, and \emph{similitude character} $\phi_p\epsilon^{-3}$ of $X(\phi)$, where $\phi_p$ is the $p$-adic Galois character associated to $\phi$; 
\item an analytic weight map $\kappa:X\longrightarrow\mathscr{W}_\phi$;
\item a ring homomorphism $\psi:\mathcal{H}\to\mathcal{O}(X)$;
\item a subset $Z\subset X(\overline{\Q}_p)$ that is Zariski dense and accumulates at all its points, such that:
\begin{itemize}[itemsep=1pt,leftmargin=0.6em]
\item[$\cdot$] for all $z\in Z$, the evaluation $\psi_z=\ev_z\circ\psi:\mathcal{H} \to \overline{\Q}_p$ is of the form $\psi_{\Pi,\mathcal{R}}$ for an automorphic representation $\Pi$ of $\GSp_4(\A_\Q)$ of Galois type, level $K$ and central character $\phi$, together with an accessible refinement $\psi_\mathcal{R}$ of $\Pi_p$; in this case we say that the family \emph{passes through} $(\Pi,\psi_{\mathcal{R}})$;
\item[$\cdot$] the map $Z\to \Hom(\mathcal{H},\overline{\Q}_p)$, $z\mapsto\psi_z$ is injective;
\item[$\cdot$] for all $z\in Z$, if $\psi_z=\psi_{\Pi,\mathcal{R}}$, then the associated Galois representation $\rho_z=\rho_{\Pi}$ is multiplicity-free with integer Hodge--Tate weights $\kappa_1(z) \leq \kappa_2(z)\leq\kappa_3(z)\leq\kappa_4(z)$ satisfying
\[
\kappa_1(z)+\kappa_4(z)=\kappa_2(z)+\kappa_3(z)=v_p(\phi(\eta_p))+3,
\]
and $\kappa(z)$, the \emph{algebraic weight} of $z$, coincides with the character $-\lambda_{\Pi_\infty} +(0, 0;3)$ associated to the Hodge--Tate cocharacter of $\Pi$ as in Definition~\ref{def: aut rep of Galois type};
\end{itemize}
\item a continuous four-dimensional pseudocharacter $T:G_{\Q,S}\to\mathcal{O}(X)$ such that for all $z\in Z$, $\rho_z$ is the semisimple representation of $G_{\Q,S}$ of trace $T_z = \ev_z \circ T$.
\end{itemize}
\end{defn}
 
\begin{rem} \label{rem: simplifying assumptions}
\begin{enumerate}[itemsep=1pt,leftmargin=1.3em]
    \item We assume that $X$ is reduced, so that all points are closed and local rings are reduced; replacing $X$ by its reduced subspace $X^{\mathrm{red}}$ does not change its dimension, so it suffices for rigidity to consider reduced $X$.
    \item Similarly we assume that $X$ is irreducible; if not, then our arguments can be applied to each irreducible component to deduce rigidity. 
    \item A subset $Z\subset X$ \emph{accumulates} at $x\in X$ if, for every open neighbourhood $U$ of $x$ there exists an affinoid neighbourhood $V\subset U$ of $x$ such that $Z\cap V$ is Zariski dense in $V$ (cf.\ \cite[\S3.3.1]{BC09}). In particular, if $Z$ accumulates at some $z_0\in Z$ and $X$ is irreducible, then $Z$ is Zariski dense in $X$\footnote{Indeed, if $Z$ accumulates at $z_0$, then $Z \cap U$ is Zariski dense in some open neighbourhood $U$ of $z_0$. Since $X$ is irreducible, the closure of $U$ in $X$ is all of $X$, so $X = \overline{U} \subseteq \overline{Z} \subseteq X$, and hence $Z$ is Zariski dense in $X$.}.
    \item The injectivity of $Z\to\Hom(\mathcal{H},\overline{\Q}_p)$ ensures that $X(\phi)$ parametrises distinct systems of Hecke eigenvalues; in particular, it excludes the case in which all points of $Z$ give the same eigensystem. Fixing the central character rules out $p$-adic families of twists. In this sense, $X(\phi)$ is nontrivial.
    \item  In practice Galois pseudocharacters exist on eigenvarieties (cf. \cite[Prop.~7.5.4]{BC09}, \cite[Prop.~7.1.1]{Che04}).
\end{enumerate}
\end{rem}

\begin{defn} \label{def: SK point}
A point $z=(\Pi,\psi_{\mathcal{R}})\in Z$ is a \emph{Saito--Kurokawa (SK) point} if $\Pi$ is a Saito–Kurokawa representation.
\end{defn}
\begin{lemma}
\label{lem: weight space for trivial CC fam}
The weight space of a $p$-adic family with trivial central character is $\mathscr{W}_1=\{\chi\in\mathscr{W}:\chi|_{Z_{\GSp_4}(\Z_p)}=(t\mapsto t^3)\}$ and if $z$ is an SK point with trivial central character, then $\kappa(z) = (1-k, 2-k; 3)$ for $k \in \mathbb{Z}_{\ge 2}$ as in Lemma~\ref{lem:SK-cohomological-structure}.
\end{lemma}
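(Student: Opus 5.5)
The lemma has two assertions, and I will prove them separately by unwinding Definitions~\ref{def: weight space} and \ref{def:p-adic-family-GSp4}, together with Lemma~\ref{lem:SK-cohomological-structure}.

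\textbf{The weight space.} Take $\phi=1$ in Definition~\ref{def: weight space}. The trivial Hecke character is algebraic with $w=0$. By the footnote formula, its $p$-adic avatar $\tilde{\phi}$ is therefore trivial, and so is the local component $\tilde{\phi}_p$. The defining condition $\chi|_{Z_{\GSp_4}(\Z_p)}=\tilde{\phi}_p[3]$ then reduces to $\chi(\mathrm{diag}(z,z,z,z))=z^3$. This is exactly the description of $\mathscr{W}_1$ in the statement.

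\textbf{The weight of an SK point.} Let $z=(\Pi,\psi_{\mathcal{R}})$ be an SK point with trivial central character. Then $\Pi=\pi_\omega$ for some $\omega$ with $\omega^2=1$, and Lemma~\ref{lem:SK-cohomological-structure}(2) gives $\lambda_{\Pi,\infty}=(k-1,k-2;0)$ with $k\in\Z_{\ge 2}$. Definition~\ref{def:p-adic-family-GSp4} defines the algebraic weight as $\kappa(z)=-\lambda_{\Pi_\infty}+(0,0;3)$, so
\[
\kappa(z)=(1-k,\,2-k;\,3).
\]
Two checks remain.

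First, this triple must lie in $X^*(T)$, which requires $c\equiv a+b \pmod 2$. Here $a+b=3-2k$ is odd and $c=3$ is odd, so the congruence holds.

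Second, $\kappa(z)$ must lie in $\mathscr{W}_1$. A weight $(a,b;c)$ evaluated on the centre is $z\mapsto z^c$ by \S\ref{s: lie theory}, and here $c=3$, as required.

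\textbf{Consistency with the Hodge--Tate weights.} As a sanity check, I will compare with the Hodge--Tate weights in Definition~\ref{def: aut rep of Galois type}. With $(a_v,b_v;c_v)=(k-1,k-2;0)$ we get $\delta_v=\tfrac12(-2k+3+3)=3-k$. The resulting weights $\{3-k,1,2,k\}$ agree with Lemma~\ref{lem:SK-cohomological-structure}(2). Moreover $\kappa_1+\kappa_4=3=v_p(\phi(\eta_p))+3$, as Definition~\ref{def:p-adic-family-GSp4} requires when $\phi=1$.

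\textbf{Main obstacle.} The only delicate point is bookkeeping of sign conventions. These are the footnote normalisation of $\tilde{\phi}$ and the minus sign in $-\lambda_{\Pi_\infty}+(0,0;3)$. I will check both directly against the stated conventions rather than re-derive them.
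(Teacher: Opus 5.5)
Your proof is correct and is exactly the direct unwinding of Definitions~\ref{def: weight space} and \ref{def:p-adic-family-GSp4} together with Lemma~\ref{lem:SK-cohomological-structure}(2) that the paper leaves implicit, since it states this lemma without a separate proof. The one interpretive choice, reading $\tilde{\phi}_p[3]$ as the product $\tilde{\phi}_p\cdot[3]$, is the one the statement itself requires, and your sign $(1-k,2-k;3)$ is the one dictated by $\kappa(z)=-\lambda_{\Pi_\infty}+(0,0;3)$; note that \S\ref{s: Rigidity for families of SK points} later writes $(k-1,k-2;3)$, which is a sign slip in the paper, not in your argument.
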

\subsection{The analytic functions}
The images of the Iwahori Hecke operators $U_{p,i}$ under $\psi:\mathcal{H}\to\mathcal{O}(X)$ define analytic functions on $X$ that interpolate rescaled crystalline Frobenius eigenvalues of $\rho_z$ for all $z\in Z$.
\begin{prop}
\label{prop:Fi}
Let $X(\phi)$ be a $p$-adic family as in Definition~\ref{def:p-adic-family-GSp4}. Then there exist invertible analytic functions $F_1,F_2,F_3,F_4\in\mathcal{O}(X)^\times$ such that for all $z\in Z$, the crystalline Frobenius eigenvalues of $\rho_z$ at $p$ are
\[
\bigl(F_1(z)p^{\kappa_1(z)},F_2(z)p^{\kappa_2(z)},
F_3(z)p^{\kappa_3(z)},F_4(z)p^{\kappa_4(z)}\bigr)
\]
in the order $\mathcal{R}$ determined by the accessible refinement $\psi_{\mathcal{R}}$ of $\Pi_p$ at $p$. 
\end{prop}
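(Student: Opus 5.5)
The plan is to build the $F_i$ directly from the images under $\psi$ of the Iwahori Hecke operators $U_{p,0},U_{p,1},U_{p,2}$. Since $p$ is inverted in $\mathcal{A}_p$, these operators are units there, so their images lie in $\mathcal{O}(X)^\times$. The ordering of the roots of $Q_p(X)$ given in \S\ref{s:Iwahori},
\[
\bigl(u_{p,1},\ u_{p,2}u_{p,1}^{-1}p,\ u_{p,0}u_{p,2}^{-1}u_{p,1}p^2,\ u_{p,0}u_{p,1}^{-1}p^3\bigr),
\]
suggests defining four invertible functions $G_1,\dots,G_4\in\mathcal{O}(X)^\times$ as the same monomials in $\psi(U_{p,i})^{\pm1}$. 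By Corollary~\ref{cor:hecke-crys} and the last bullet of Definition~\ref{def: aut rep of Galois type}, at each $z\in Z$ these monomials evaluate to the crystalline Frobenius eigenvalues of $\rho_z$, ordered by the refinement, once we correct for the normalisation in $\psi_{\mathcal{R},p}$.

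That normalisation is the first thing to check. By Definition~\ref{def: refined rep}, $\psi_z(U_{p,i})$ is not the classical eigenvalue $u_{p,i}$. It is $\delta_B^{-1/2}(m_{p,i})$ times the value of the refinement, multiplied by the algebraic character $\delta_{\underline{k}}(m_{p,i})$. So the classical eigenvalue equals $\psi_z(U_{p,i})$ times a factor $\delta_{\underline{k}}(m_{p,i})^{-1}$. This factor is a power of $p$ (possibly including $p^{1/2}$, which is fixed via $\iota_p$), and its exponent is an explicit linear function of $\kappa(z)=-\lambda_{\Pi_\infty}+(0,0;3)$.

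Substituting into the monomials, the $j$-th crystalline eigenvalue becomes $G_j(z)\,p^{\ell_j(\kappa(z))}$ for affine linear forms $\ell_j$ on $X^*(T)$. I would then verify that $\ell_j(\kappa(z))=\kappa_j(z)$, the $j$-th Hodge--Tate weight in the list $\{\delta,\delta+b,\delta+a,\delta+a+b\}$ of Definition~\ref{def: aut rep of Galois type}, up to a constant independent of $z$. This is a direct check with the formulas for $\delta_B(m_{p,i})$ and the cocharacter dictionary of \S\ref{s: lie theory}; any leftover constant power of $p$ can be absorbed into $G_j$. The upshot is that $F_j=G_j$, possibly times a fixed power of $p$.

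If the exponents instead come out as the refinement-dependent quantities $\kappa_{w(j)}$ rather than $\kappa_j$, this would not depend analytically on $z$. In that case I would use the normalisation $\delta_{\underline{k}}$ exactly as the paper fixes it, which is designed to give $\kappa_j$ (dominant ordering) for every refinement. This is the main bookkeeping obstacle: the signs and half-integral shifts in the $p$-power normalisations must match so that the exponent is $\kappa_j(z)$ uniformly on $Z$, independent of which refinement $\psi_{\mathcal{R}}$ occurs at $z$.

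The last issue is whether $\kappa_j$ is analytic, since $p^{\kappa_j(z)}$ is only meaningful on $Z$. This does not matter: the statement only requires the $F_j$ to be analytic on $X$ and the identity to hold at the points of $Z$. So the $F_j$ are globally defined units in $\mathcal{O}(X)^\times$, and the proposition follows.
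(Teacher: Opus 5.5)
Your proposal is correct and is essentially the paper's proof. The paper takes $F_1=\psi(U_{p,1})$, $F_2=\psi(U_{p,2})\psi(U_{p,1})^{-1}$, $F_3=\psi(U_{p,0})F_2^{-1}$ and $F_4=\psi(U_{p,0})F_1^{-1}$, which are your monomials without the constants $p^{j-1}$, and it reads off the eigenvalue identity from the normalisation in Definition~\ref{def: refined rep} and Corollary~\ref{cor:hecke-crys}. The refinement-dependence you worry about cannot occur, because $\delta_{\underline{k}}$ depends only on $\lambda_{\Pi,\infty}$ and not on $\psi_{\mathcal{R}}$; the direct check then gives exponents exactly $\kappa_j(z)$, with no leftover power of $p$.
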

\begin{proof}
Since the $U_{p,i}$ are invertible in $\mathcal{A}_p$ and $\psi:\mathcal{H} \to \mathcal{O}(X)$ is a ring homomorphism, the functions
\[
F_1:=\psi(U_{p,1}),\quad
F_2:=\psi(U_{p,2})\psi(U_{p,1})^{-1},\quad
F_3:=\psi(U_{p,0})\,F_2^{-1},\quad
F_4:=\psi(U_{p,0})\,F_1^{-1}
\]
are analytic and invertible on $X$. The assertion on the crystalline Frobenius eigenvalues of $\rho_z$ follows from Definitions~\ref{def: refined rep} and \ref{def:p-adic-family-GSp4} and Corollary~\ref{cor:hecke-crys}.
\end{proof}

\subsection{Symmetry of $T$}
Let $A$ be a commutative ring. For a pseudocharacter $T : G_{\Q,S} \to A$, we define its dual by $T^\vee(g) := T(g^{-1})$. If $T=\mathrm{tr}(\rho)$ for a representation $\rho$, then $T^\vee=\mathrm{tr}(\rho^\vee)$ is the trace of the dual representation. For a character $\psi : G_{\Q,S} \to A$, define $T^\tau := T^\vee \otimes \psi$, where $(T\otimes\psi)(g)=T(g)\psi(g)$. This is an involution that preserves dimension on the set of $\psi$‑self‑dual pseudocharacters of $G_{\Q, S}$ over~$A$. If $\rho$ is $\psi$‑self‑dual and $T=\mathrm{tr}(\rho)$, then $T^\tau=T^\vee\otimes\psi=\mathrm{tr}(\rho^\vee\otimes\psi)=\mathrm{tr}(\rho)=T$. 

\begin{lemma} \label{lem: pseudo involution}
Let $T : G_{\Q,S} \to \mathcal{O}(X)$ be the pseudocharacter of a $p$-adic family $X(\phi)$. Then $T = T^\tau$, where $T^\tau=T^\vee\otimes\psi$ for $\psi=\phi_p(-3)$ the similitude character of $X(\phi)$.
\end{lemma}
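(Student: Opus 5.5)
The plan is to prove the identity pointwise on the Zariski dense set $Z$ and then extend it to all of $X$ by continuity and density.

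\emph{Step 1: the identity at classical points.} Fix $z\in Z$ with $\psi_z=\psi_{\Pi,\mathcal{R}}$. By Definition~\ref{def:p-adic-family-GSp4}, $T_z=\ev_z\circ T$ is the trace of the semisimple representation $\rho_z=\rho_\Pi$. By Definition~\ref{def: aut rep of Galois type}, $\rho_\Pi$ factors through $\GSp_4(\overline{\Q}_p)$ with similitude $\nu\circ\rho_\Pi=\phi_p\epsilon^{-3}=\psi$. Any $g\in\GSp_4$ satisfies $gJg^t=\nu(g)J$, equivalently ${}^tg^{-1}=\nu(g)^{-1}JgJ^{-1}$. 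Hence $\mathrm{std}\circ\rho_\Pi$ is conjugate, via $J$, to $\rho_\Pi^\vee\otimes\psi$. Taking traces gives
\[
T_z(g)=T_z(g^{-1})\psi(g)=(T_z^\vee\otimes\psi)(g)
\]
for all $g\in G_{\Q,S}$. So $\ev_z\circ T=\ev_z\circ T^\tau$ for every $z\in Z$.

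\emph{Step 2: from $Z$ to $X$.} Fix $g\in G_{\Q,S}$ and consider $f_g:=T(g)-T(g^{-1})\psi(g)\in\mathcal{O}(X)$. Here $\psi(g)$ is a constant in $\overline{\Q}_p^\times$, or at worst in $\mathcal{O}(X)^\times$ after a finite extension of scalars, which is harmless since $\phi$ is fixed. By Step 1, $f_g$ vanishes on $Z$. Since $Z$ is Zariski dense and $X$ is reduced (Remark~\ref{rem: simplifying assumptions}), a global analytic function vanishing on $Z$ is zero. Indeed, its zero locus is a Zariski closed subset containing $Z$, so it is all of $X$, and reducedness then forces $f_g=0$. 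Hence $T(g)=T^\tau(g)$ for all $g$, that is, $T=T^\tau$.

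\emph{Main obstacle.} The only point needing care is Step 1. One must check that the symplectic self-duality up to the similitude $\psi$ really gives $\mathrm{tr}\,\rho^\vee\otimes\psi=\mathrm{tr}\,\rho$ with exactly the character $\phi_p\epsilon^{-3}$ recorded in the definition. This is the identity ${}^tg^{-1}=\nu(g)^{-1}JgJ^{-1}$ above, together with the first bullet of Definition~\ref{def: aut rep of Galois type}. A secondary point is the coefficient field of $\psi$, which is handled by working over a finite extension as noted in Step 2. Continuity of $T$ plays no role, since the argument is pointwise in $g$.
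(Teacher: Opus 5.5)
Your proof is correct and follows the same route as the paper: you establish $\psi$-self-duality of $\rho_z$ at each $z\in Z$, then pass to all of $X$ using the Zariski density of $Z$. You also fill in two steps the paper leaves implicit, namely the identity ${}^tg^{-1}=\nu(g)^{-1}JgJ^{-1}$ giving $\rho_z\simeq\rho_z^\vee\otimes\psi$, and the use of reducedness of $X$ to conclude that a function in $\mathcal{O}(X)$ vanishing on $Z$ is zero.
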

\begin{proof}
For all $z=(\Pi,\psi_{\mathcal{R}})\in Z$, the Galois representation $\rho_z$ is $\phi_p(-3)$‑self‑dual. Hence $T_z = T_z^\vee \otimes \psi = T_z^\tau$ for all $z \in Z$. Since $T$ is determined by its specialisations at the Zariski dense subset $Z$ of $X$, the identity $T=T^\tau$ holds on all of $X$. 
\end{proof}

\begin{rem}\label{rem: tau-basechange}
The identity $T=T^\tau$ of Lemma~\ref{lem: pseudo involution} extends to $T\otimes_{\mathcal{O}(X)}A$ for any local $\mathcal{O}(X)$-algebra $A$. Twisting by any character $\chi:G_{\Q,S} \to A$ also preserves the $\tau$-structure on constituents. 
\end{rem}

\section{Rigidity for families of SK points}
\label{s: Rigidity for families of SK points}
If $X(\phi)$ is a $p$-adic family through an SK point $z_0=(\pi_\omega,\psi_{\mathcal R})\in Z$ in the sense of Definition~\ref{def: SK point}, then by twisting by $\omega^{-1}\circ\nu$ we may assume that $\omega=\phi=1$. Henceforth we write $v=v_p$ and fix such an SK point $z_0 = (\pi,\psi_i)$, so that $z_0$ is crystalline and has algebraic weight $(k(z_0)-1, k(z_0)-2;3)$, for $k(z_0) \in \Z_{\geq 2}$ and define $\alpha(z_0):= \,p^{k(z_0)-3/2} \chi(p)$, where $\pi_{p} = \chi 1_{\GL_2} \rtimes \chi^{-1}$. We analyse $p$-adic families in which a subset of SK points accumulates at $z_0$ and refer to such a $p$-adic family as an \emph{SK family}. 

If $z \in X(1)$ is an SK point with $\pi_{\omega,p} = \chi_z 1_{\GL_2}\rtimes \chi^{-1}_z \omega^p$ and $\chi_z(p)=\alpha(z)\,p^{3/2-k(z)}$, by the symmetry $\chi_z \leftrightarrow \chi_z^{-1}$ we may assume that $v(\alpha_z) \leq k(z)-3/2$ (cf. Remark~\ref{rem: eval conj}): this fixes a choice of $\chi_z$ over $\chi_z^{-1}$, and hence generically determines the refinement labellings $(\psi_1,\psi_4)$ and $(\psi_2,\psi_3)$ in Table~\ref{tab: 1}\footnote{for the case $v(\alpha_z) = k(z)-3/2$, the labelling is immaterial since the valuations $v(F_i(z))$ for $\psi_1$ and $\psi_4$ (resp.~$\psi_2$ and $\psi_3$) coincide and the condition on $C(z_0)$ if $i=3$ is unsatisfiable.}. The refinement $\psi_i$ plays a decisive role. The next proposition summarises the corresponding rigidity results.

\begin{prop}
\label{prop: SK summary}
Suppose that $X(1)$ is a $p$-adic family through a $\psi_i$-refined SK point $z_0$ of algebraic weight $(k(z_0)-1, k(z_0)-2;3)$ and $i\in\{1,2,3,4\}$. Let $Z_{\mathrm{SK}}\subset Z$ denote the set of all SK points.
\begin{enumerate}
\item[(i)] If $i\in\{2,3\}$ set $Z_{\psi_i}= Z_{\mathrm{SK}}$, and assume $C(z_0) := v(\alpha(z_0))\neq k(z_0)-2$ when $i=3$. 

\item[(ii)] If $i\in\{1,4\}$, let

\[
C(z_0):=v(F_1(z_0))=
\begin{cases}
v(\alpha(z_0)) & i=1\\[4pt]
2k(z_0)-3-v(\alpha(z_0)) & i=4
\end{cases}
\]
\noindent
Let $Z_{\psi_i}\subset Z_{\mathrm{SK}}$ be the subset of SK points such that $v(F_1(z))\neq C(z_0)$ if $z$ is $\psi_1$- or $\psi_4$-refined.
\end{enumerate}
If $Z_{\psi_i}$ accumulates at $z_0$, then $X(1)$ is a point.
\end{prop}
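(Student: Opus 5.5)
We argue by contradiction, following Bellaïche's analogous argument for $\mathrm{U}(2,1)$: the analytic functions $F_j$ of Proposition~\ref{prop:Fi} are continuous on $X$, while along SK points their valuations are pinned down by $k(z)$ and $v(\alpha(z))$ in a way that is incompatible with accumulation at $z_0$ unless $X$ is a point.

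\textbf{Step 1: tabulate the $F_j$ at SK points.} For an SK point $z$ with $\pi_p=\chi_z 1_{\GL_2}\rtimes\chi_z^{-1}$, the crystalline Frobenius eigenvalues of $\rho_z=\epsilon^{-1}\oplus\epsilon^{-2}\oplus\rho_\mu$ are, up to normalisation, $\{p,\,p^2,\,\alpha(z),\,\beta(z)\}$ with $\alpha\beta=p^{2k-3}$ (Lemma~\ref{lem:SK-cohomological-structure}). Proposition~\ref{prop: refinements} and the ordering in \S\ref{s:Iwahori} show how each refinement $\psi_j$ orders these eigenvalues. The Hodge--Tate weights are $(\kappa_1,\dots,\kappa_4)=(3-k,1,2,k)$ (Lemma~\ref{lem: weight space for trivial CC fam}), so $F_m(z)=(\text{$m$-th eigenvalue})\,p^{-\kappa_m(z)}$. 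This yields the valuations $v(F_m(z))$ as explicit affine functions of $k(z)$ and $v(\alpha(z))$ for each refinement type; these are the entries of Table~\ref{tab: 1}. For $\psi_1,\psi_4$ one gets $v(F_1)=v(\alpha)$ or $2k-3-v(\alpha)$. For $\psi_2,\psi_3$ the first eigenvalue is a ``nonordinary'' one placed against a small Hodge--Tate weight, producing a valuation that grows linearly in $k(z)$.

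\textbf{Step 2: local constancy.} Since the $F_m$ are invertible analytic functions, $v(F_m)$ is locally constant on a small affinoid neighbourhood $V$ of $z_0$. The weight map $\kappa$ is continuous, so $k(z)$ lies in a fixed $p$-adic neighbourhood of $k(z_0)$ on $V$. Hence on $V\cap Z_{\psi_i}$ we have $v(F_m(z))=v(F_m(z_0))$ for all $m$, while each such $z$ is $\psi_j$-refined for some $j$.

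\textbf{Step 3: case analysis, forcing $k(z)=k(z_0)$.} We check, type by type, that matching all four valuations with those of $z_0$ is impossible unless $k(z)=k(z_0)$ and the eigenvalue data agree.
\begin{itemize}
\item If $z_0$ is $\psi_2$-refined, then $v(F_1)$ and $v(F_4)$ at $z_0$ have a signature no SK point of another type can share. Within the $\psi_2$ type, the valuations determine $k(z)$ and $v(\alpha(z))$.
\item If $z_0$ is $\psi_3$-refined, there is a coincidence with a $\psi_2$- or $\psi_4$-refined point exactly when $v(\alpha(z_0))=k(z_0)-2$. The hypothesis $C(z_0)\neq k(z_0)-2$ excludes it.
\item If $z_0$ is $\psi_1$- or $\psi_4$-refined, the set $Z_{\psi_i}$ was defined precisely to exclude points with $v(F_1(z))=C(z_0)$ of those types. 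The remaining types cannot match.
\end{itemize}
Consequently every $z\in V\cap Z_{\psi_i}$ has $k(z)=k(z_0)$, so $\kappa$ is constant on a Zariski dense subset of $V$, hence constant on $V$.

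\textbf{Step 4: conclude that $X$ is a point.} With the weight fixed, the Hecke eigensystems of SK points of fixed weight $2k-2$, fixed tame level and fixed slope form a finite set, by finiteness of classical newforms of given weight and level. Then injectivity of $z\mapsto\psi_z$ leaves $V\cap Z_{\psi_i}$ finite. Zariski density of $V\cap Z_{\psi_i}$ in $V$ then forces $\dim V=0$. By irreducibility (Remark~\ref{rem: simplifying assumptions}), $X$ is a point.

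The main obstacle is Step 3. It requires the careful bookkeeping of Table~\ref{tab: 1} and checking that the only cross-type coincidences are exactly those excluded by the hypotheses. The labelling convention $v(\alpha)\le k-3/2$ must be handled consistently at boundary slopes.
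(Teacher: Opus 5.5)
Your overall architecture matches the paper's. It uses local constancy of $v(F_1)$ and $v(F_2)$ near $z_0$, the values in Table~\eqref{tab: 1}, and finiteness of classical forms of fixed weight and level. The paper phrases this contrapositively: it picks $z\in U\cap Z_{\psi_i}$ with $k(z)\neq k(z_0)$ (resp.\ $k(z)\neq C(z_0)+1$) and derives a contradiction. However, the case analysis you sketch in Step~3 is wrong in two places. Its conclusion, that every $z\in V\cap Z_{\psi_i}$ has $k(z)=k(z_0)$, is false in general.

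\textbf{The case $i=2$.} The pair $(v(F_1),v(F_4))$ carries no more information than $v(F_1)$. The first and fourth crystalline Frobenius eigenvalues multiply to an element of valuation $3$, and $\kappa_1+\kappa_4=3$, so $v(F_4)=-v(F_1)$ on all of $Z$. And $v(F_1)$ alone neither separates types nor fixes the weight. For example, a $\psi_1$-refined SK point with $v(\alpha(z))=k(z_0)-1$, which is allowed once $k(z)\geq k(z_0)+1$, has $v(F_1(z))=v(F_1(z_0))$. The separating invariant is $v(F_2)$. It equals $1$ at every $\psi_1$- or $\psi_4$-refined SK point, whereas $v(F_2(z_0))=v(\alpha(z_0))+2-k(z_0)\le 1/2$. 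This is exactly where the normalisation $v(\alpha)\le k-3/2$ is used. Likewise, for $i=3$ the potential coincidence is with $\psi_1$/$\psi_4$-refined points, not $\psi_2$-refined ones.

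\textbf{The case $i\in\{1,4\}$.} Your claim that ``the remaining types cannot match'' is false. Take a $\psi_3$-refined SK point with $k(z)=C+1$ and $v(\alpha(z))=C-1$, which is possible whenever $C\in\Z_{\ge1}$. It has $v(F_1(z))=C$ and $v(F_2(z))=1$, so it matches $z_0$ in every $v(F_m)$. It is not removed from $Z_{\psi_i}$, whose defining condition only concerns $\psi_1$/$\psi_4$-refined points. What $v(F_1)$ actually gives is the following:
\begin{itemize}
\item $\psi_2$/$\psi_3$-refined points of $V\cap Z_{\psi_i}$ have $k(z)=C+1$;
\item $\psi_1$/$\psi_4$-refined points other than $z_0$ are excluded by definition.
\end{itemize}
Hence $k(z)\in\{k(z_0),C+1\}$ on $V\cap Z_{\psi_i}$. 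A finite set of weights is all Step~4 needs; the ``$\kappa$ constant on $V$'' step and the ``fixed slope'' condition are superfluous. This is precisely why the paper picks $z$ with $k(z)\neq C+1$.

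With these corrections your argument becomes the paper's.
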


\subsection{SK points}
Let $z=(\pi_\omega,\psi_i)\in Z$ be an SK point with trivial central character and algebraic weight $(k-1,k-2;3)$, where $k=k(z) \in \Z_{\ge 2}$. Then $\pi_{\omega,p}$ is of the form $\pi_{\omega,p} = \chi \rtimes \chi^{-1} \omega^p$, $\rho_z=\omega_p(-1) \oplus \omega_p (-2) \oplus \rho_{\mu, \omega}$, and the values of the $F_i$ may be computed from Propositions~\ref{cor:hecke-crys} and \ref{prop:Fi}. Since $\omega^2=1$, we have $\omega(\eta_p)^2=1$ and hence $v(\omega(\eta_p))=0$, so the $v(F_i(z))$ are independent of $\omega$. Writing $\chi(p)=\alpha\,p^{3/2-k}$, we record these values in the following table.
\begin{equation} \label{tab: 1}
    \begin{array}{r|c|c|c|c}
    & \psi_1 & \psi_2 & \psi_3 & \psi_4  \\
    \hline
    F_1(z) & \omega(\eta_p) \alpha & \omega(\eta_p) p^{k-1} & \omega(\eta_p) p^{k-1} & \omega(\eta_p) \alpha^{-1} p^{2k-3} \\
    F_2(z) & \omega(\eta_p)p & \omega(\eta_p) \alpha p^{2-k} & \omega(\eta_p)\alpha^{-1} p^{k-1} & \omega(\eta_p) p
    \end{array}
\end{equation}
\begin{rem} \label{rem: eval conj}
If $\pi_{\omega}$ is unramified at $p$, then the crystalline Frobenius eigenvalues of $\rho_{\pi, \omega}$ are $\{\alpha\omega(\eta_p)p^{3-k},\omega(\eta_p)p^2, \omega(\eta_p)p, \alpha^{-1}\omega(\eta_p)p^{k}\}$, and those of $\rho_{\mu, \omega}$ are $\alpha_1:=\alpha \omega(\eta_p) p^{3-k}$ and $\alpha_2 :=\alpha^{-1}\omega(\eta_p)p^k$. We label them so that $v(\alpha_1)\le v(\alpha_2)$; this fixes a choice of $\chi$ over $\chi^{-1}$, and hence determines the refinement pairings $(\psi_1,\psi_4)$ and $(\psi_2,\psi_3)$ in Table~\ref{tab: 1}.
In particular, crystalline Frobenius eigenvalues of $\rho_{\pi, \omega}$ are repeated precisely when $\rho_{\mu, \omega}$ has repeated crystalline Frobenius eigenvalues\footnote{for non‑CM classical modular forms of weight $m \in \Z_{\ge 2}$, the crystalline Frobenius eigenvalues are conjectured (and known for $m=2$) to be distinct (\cite{CE98}).}. The crystalline Frobenius eigenvalues of $\rho_{\mu, \omega}$ are always distinct from $\omega(\eta_p)p$ and $\omega(\eta_p)p^2$, which suffices for our purposes. In particular, the rigidity arguments of \S \ref{s: Rigidity for families of SK points} depend only on the valuations of $F_1$ and $F_2$ in Table~\eqref{tab: 1} and therefore do not require the $\psi_i$ to be pairwise distinct. The same is true for Theorem~\ref{thm: X is a point} under the additional hypotheses of \S\ref{s:tot-rig}.  
\end{rem}

\subsection{Results}
We now prove the rigidity statements for SK families. 
\begin{prop} \label{prop:ref 2/3}
Suppose that $X(1)$ is a $p$-adic family through a $\psi_i$-refined SK point $z_0$ for $i\in\{2,3\}$, and assume that $v(\alpha(z_0))\neq k(z_0)-2$ when $i=3$. Let $Z_{\mathrm{SK}}\subset Z$ denote the subset of SK points. If $Z_{\mathrm{SK}}$ accumulates at $z_0$, then $X(1)$ is a point.
\end{prop}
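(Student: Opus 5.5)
We follow Bella\"iche's slope argument. Only the invertible analytic functions $F_1,F_2$ of Proposition~\ref{prop:Fi} and the valuations in Table~\eqref{tab: 1} are needed. The plan is to show that every SK point sufficiently close to $z_0$ has the same weight $k(z_0)$. Finiteness of automorphic representations of fixed weight and level then contradicts Zariski density unless $X$ is zero-dimensional.

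\emph{Step 1: local constancy of slopes.} Since $F_1,F_2\in\mathcal{O}(X)^\times$, there is an open affinoid neighbourhood $U$ of $z_0$ on which $v(F_j(z))=v(F_j(z_0))$ for $j=1,2$. Concretely, shrink $U$ until $|F_j-F_j(z_0)|<|F_j(z_0)|$ on $U$. Because $Z_{\mathrm{SK}}$ accumulates at $z_0$, there is an affinoid $V\subset U$ containing $z_0$ with $Z_{\mathrm{SK}}\cap V$ Zariski dense in $V$.

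By Table~\eqref{tab: 1}, $v(F_1(z_0))=k(z_0)-1$ in both cases $i=2,3$. The value of $v(F_2(z_0))$ depends on $i$:
\begin{itemize}
\item for $i=2$ it equals $v(\alpha(z_0))+2-k(z_0)\le 1/2$, using the normalisation $v(\alpha)\le k-3/2$;
\item for $i=3$ it equals $k(z_0)-1-v(\alpha(z_0))$, which differs from $1$ precisely by the hypothesis $v(\alpha(z_0))\ne k(z_0)-2$.
\end{itemize}

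\emph{Step 2: nearby SK points have the same weight.} Let $z\in Z_{\mathrm{SK}}\cap V$, refined by $\psi_j$. If $j\in\{1,4\}$, the table gives $v(F_2(z))=1\ne v(F_2(z_0))$, which contradicts Step 1. Hence $j\in\{2,3\}$, and then $v(F_1(z))=k(z)-1$. Comparing with $v(F_1(z_0))=k(z_0)-1$ gives $k(z)=k(z_0)$. As noted in Remark~\ref{rem: eval conj}, this uses only valuations, so no distinctness of the $\psi_i$ is required.

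\emph{Step 3: finiteness.} Every $z\in Z_{\mathrm{SK}}\cap V$ is a pair $(\pi_\omega,\psi_j)$ of level $K$ with $k(z)=k(z_0)$. Such a point is determined by three pieces of data:
\begin{itemize}
\item the newform $\mu$, of weight $2k(z_0)-2$ and level bounded in terms of $K$, of which there are finitely many;
\item the quadratic character $\omega$ unramified outside $S$, of which there are finitely many;
\item one of four refinements.
\end{itemize}
By the injectivity of $z\mapsto\psi_z$ in Definition~\ref{def:p-adic-family-GSp4}, the set $Z_{\mathrm{SK}}\cap V$ is finite. A finite set that is Zariski dense in the reduced affinoid $V$ forces $\dim V=0$. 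Since $X$ is irreducible and $V$ is open in $X$, it follows that $\dim X=0$, and irreducibility together with reducedness makes $X$ a point.

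The main point requiring care is Step 3. One must justify that the level $K$ bounds the conductor of $\mu$, via Schmidt's local description of $\pi_\ell$ and the finite-dimensionality of $\pi_\ell^{K_\ell}\neq0$. One must also check that the labelling convention $v(\alpha_z)\le k(z)-3/2$ is applied consistently at nearby points, so that the table rows are correct. If the conductor bound is awkward, a fallback is available: Step 2 already shows that $\kappa$ is constant on the Zariski-dense set $Z_{\mathrm{SK}}\cap V$, hence on $X$. One can then argue finiteness of the fibre using the bounded slopes from Step 1.
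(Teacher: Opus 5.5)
Your proposal is correct and is essentially the paper's argument: local constancy of $v(F_1)$ and $v(F_2)$ near $z_0$, together with Table~\eqref{tab: 1}, rules out nearby $\psi_1$/$\psi_4$-refined SK points and forces $k(z)=k(z_0)$ for nearby $\psi_2$/$\psi_3$-refined ones, so finiteness of automorphic representations of fixed weight and level contradicts positive dimension; the paper only phrases this contrapositively, by picking a nearby SK point with $k(z)\neq k(z_0)$. Your explicit use of the injectivity of $z\mapsto\psi_z$ in the finiteness step makes precise something the paper leaves implicit, and your worry about the labelling convention at nearby points is unnecessary: only the pairings $\{\psi_1,\psi_4\}$ and $\{\psi_2,\psi_3\}$ are used there, and these are intrinsic because $\chi\leftrightarrow\chi^{-1}$ swaps within each pair.
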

\begin{proof}
If $\dim X>0$, then by analyticity of the $F_i$, on a sufficiently small connected affinoid neighbourhood $U$ of $z_0$ the valuations $v(F_1)$ and $v(F_2)$ are constant. Since $Z_{\mathrm{SK}}$ accumulates at $z_0$ and $U$ is an affinoid open of the irreducible space $X$, $U\cap Z_{\mathrm{SK}}$ is infinite. Since there are only finitely many automorphic forms of a fixed weight and level, and the algebraic weight of an SK point $z$ with trivial central character is $(k(z)-1,k(z)-2;3)$, it follows that $U\cap Z_{\mathrm{SK}}$ contains a point $z\neq z_0$ with $k(z)\neq k(z_0)$. If $z$ is $\psi_2$- or $\psi_3$-refined, then by Table~\eqref{tab: 1},
\[
k(z)-1=v(F_1(z))=v(F_1(z_0))=k(z_0)-1,
\]
contradicting $k(z)\neq k(z_0)$. Thus $z$ must be $\psi_1$- or $\psi_4$-refined. In this case,
\begin{equation} \tag{$\star$}
        1= v(F_2(z))=v(F_2(z_0))= \begin{cases}
            v(\alpha(z_0))+2-k(z_0) & i=2 \\
            -v(\alpha(z_0))+k(z_0)-1 & i=3
        \end{cases}
\end{equation}
Since $v(\alpha(z_0))\le k(z_0)-3/2$, if $i=2$, then $(\star)$ gives $1=v(F_2(z_0))=v(\alpha)+2-k(z_0) \leq 1/2$, a contradiction. If $i=3$, then $(\star)$ gives $v(\alpha(z_0))=k(z_0)-2$, contradicting the hypothesis on $z_0$. Hence $\dim X=0$, and since $X$ is irreducible, $X(1)$ is a point.
\end{proof}

\begin{prop} \label{prop: ref 1/4}
Suppose that $X(1)$ is a $p$-adic family through a $\psi_i$-refined SK point $z_0$ for $i \in \{1, 4\}$ and set $C=C(z_0):= v(F_1(z_0))$. Let $Z_{{\psi_i}} \subset Z$ denote the subset of SK points such that for all $z \in Z_{\psi_i} \setminus \{z_0\}$:
    \begin{enumerate}
        \item If $z$ is $\psi_1$-refined, then $v(\alpha(z)) \neq C$.
        \item If $z$ is $\psi_4$-refined, then $2k(z)-3 -v(\alpha(z))\neq C$. 
    \end{enumerate}    
If $Z_{\psi_i}$ accumulates at $z_0$, then $X(1)$ is a point. Moreover, the same conclusion follows if $Z_{\psi_i}$ is replaced by the subset $Z_{\psi_i}'=\{\,z\in Z_{\psi_i} : \text{neither slope of } f_z \text{ equals }C\,\}.$
\end{prop}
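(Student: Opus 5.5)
We argue by contradiction, following the template of Proposition~\ref{prop:ref 2/3}. Suppose $\dim X>0$. Since $F_1,F_2\in\mathcal{O}(X)^\times$ are invertible analytic functions, we may choose a sufficiently small connected affinoid neighbourhood $U$ of $z_0$ on which $v(F_1)$ and $v(F_2)$ are constant. Because $Z_{\psi_i}$ accumulates at $z_0$ and $X$ is irreducible, $U\cap Z_{\psi_i}$ is Zariski dense in an affinoid neighbourhood, hence infinite. Only finitely many automorphic representations occur at a fixed weight and level $K$, and an SK point with trivial central character has algebraic weight $(k(z)-1,k(z)-2;3)$. Hence we may pick $z\in U\cap Z_{\psi_i}$ with $z\neq z_0$ and $k(z)\neq k(z_0)$. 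In particular $v(F_1(z))=C$ and $v(F_2(z))=v(F_2(z_0))=1$, the latter read off from Table~\eqref{tab: 1}, since $z_0$ is $\psi_1$- or $\psi_4$-refined.

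We then split according to the refinement of $z$.

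\textbf{Case $z$ is $\psi_2$- or $\psi_3$-refined.} Table~\eqref{tab: 1} gives $v(F_2(z))=v(\alpha(z))+2-k(z)\le 1/2$ (using $v(\alpha(z))\le k(z)-3/2$) or $v(F_2(z))=k(z)-1-v(\alpha(z))$. The first contradicts $v(F_2(z))=1$. The second would force $v(\alpha(z))=k(z)-2$, which is not yet excluded. So we also use $F_1$: $\psi_2$/$\psi_3$-refined points satisfy $v(F_1(z))=k(z)-1$, whence $C=k(z)-1$.

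\textbf{Case $z$ is $\psi_1$-refined.} Then $v(F_1(z))=v(\alpha(z))$ must equal $C$, contradicting condition (1).

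\textbf{Case $z$ is $\psi_4$-refined.} Then $v(F_1(z))=2k(z)-3-v(\alpha(z))$ must equal $C$, contradicting condition (2).

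It remains to close the $\psi_3$ subcase, and this is the main obstacle. I would exploit the fact that there are infinitely many such $z$ with distinct weights $k(z)$. Each has $k(z)-1=C$, which is fixed, so at most one weight can occur. Applying the finiteness of forms of fixed weight and level once more, we can choose $z$ outside this finitely many set, which gives the contradiction. In fact the same argument disposes of the whole $\psi_2$/$\psi_3$ case at once: $v(F_1)=C$ pins down $k(z)=C+1$, so we simply choose $z$ with $k(z)\neq C+1$ as well as $k(z)\neq k(z_0)$. Hence $\dim X=0$, and since $X$ is irreducible, $X(1)$ is a point.

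For the variant with $Z'_{\psi_i}$, note that the slopes of $f_z$ are $v(\alpha_1)$ and $v(\alpha_2)$, which are $v(\alpha(z))$ and $2k(z)-3-v(\alpha(z))$ after normalising by the fixed weight shift. The condition ``neither slope equals $C$'' therefore implies both (1) and (2), independently of the refinement of $z$. Thus $Z'_{\psi_i}\subset Z_{\psi_i}$ and the same argument applies verbatim; in fact only accumulation of $Z'_{\psi_i}$ at $z_0$ is used. I would double-check the slope normalisation against Remark~\ref{rem: eval conj}, where the eigenvalues $\alpha p^{3-k}$ and $\alpha^{-1}p^{k}$ differ from those in Table~\eqref{tab: 1} by a Tate twist. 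If the normalisation does not match, the condition should be read in the normalisation of Table~\eqref{tab: 1}.
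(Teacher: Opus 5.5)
Your proposal is correct and is essentially the paper's argument once the $F_2$ detour is dropped: choose $z\in U\cap Z_{\psi_i}\setminus\{z_0\}$ with $k(z)\neq C+1$. Then $v(F_1(z))=C$ is contradicted by Table~\eqref{tab: 1} if $z$ is $\psi_2$- or $\psi_3$-refined, and by conditions (1) and (2) if $z$ is $\psi_1$- or $\psi_4$-refined. Your slope normalisation also checks out: the paper takes $\rho_{f_z}=\rho_{\mu,\omega}(3-k)$, whose crystalline Frobenius eigenvalues have valuations exactly $v(\alpha(z))$ and $2k(z)-3-v(\alpha(z))$, so the condition defining $Z'_{\psi_i}$ gives (1) and (2) regardless of the refinement of $z$.
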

\begin{proof}
This follows by a similar argument to Proposition~\ref{prop:ref 2/3}, by choosing a point $z\in U\cap Z_{\psi_4}\setminus\{z_0\}$ with $k(z)\neq C+1$ and considering $v(F_1)$. The final assertion follows as the slopes of $f_z$, the $p$-adic valuations of the crystalline Frobenius eigenvalues of $\rho_{f_z} = \rho_{\mu, \omega}(3-k)$, are precisely $v(\alpha(z))$ and $-v(\alpha(z))+2k(z)-3$.
\end{proof}

For appropriate constants $M$, no nontrivial fixed-slope SK families can pass through a $\psi_1$- or $\psi_4$-refined SK point, where the \emph{slope} $v(\alpha(z))$ of an SK point $z$ is by definition the slope $\mathrm{min}(v(\alpha(z)), -v(\alpha(z))+2k(z)-3)$ of $f_z$.

\begin{prop}\label{prop: fixed slope}
Suppose that $X(1)$ is a $p$-adic family through a $\psi_i$-refined SK point for $i \in \{1, 4\}$ $z_0$ and set $C=C(z_0):=v(F_1(z_0))$. Let $M \neq C$ be a non-negative constant and let $Z_{\psi_i}''\subset Z_{\mathrm{SK}}$ denote the set of SK points with slope $v(\alpha(z))=M$. If $Z_{\psi_i}''$ accumulates at $z_0$, then $X(1)$ is a point.  
\end{prop}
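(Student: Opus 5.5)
We argue by contradiction, following the proof of Proposition~\ref{prop:ref 2/3}. Suppose $\dim X>0$. Since $F_1,F_2\in\mathcal{O}(X)^\times$ are analytic and invertible (Proposition~\ref{prop:Fi}), we may choose a sufficiently small connected affinoid neighbourhood $U$ of $z_0$ on which $v(F_1)$ and $v(F_2)$ are constant. In particular $v(F_1)\equiv C$ on $U$. Since $Z''_{\psi_i}$ accumulates at $z_0$ and $\dim X>0$, the set $U\cap Z''_{\psi_i}$ is Zariski dense in a positive-dimensional affinoid, hence infinite. There are only finitely many automorphic representations of a fixed weight and level $K$, and the algebraic weight of an SK point is $(k(z)-1,k(z)-2;3)$. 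Hence $U\cap Z''_{\psi_i}$ contains points $z$ with $k(z)$ arbitrarily large. We fix such a $z\neq z_0$ with $k(z)-1>C$ and $2k(z)-3-M> C$.

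We then split according to the refinement of $z$, using Table~\eqref{tab: 1} and recalling that $v(\omega(\eta_p))=0$.
\begin{itemize}
\item If $z$ is $\psi_2$- or $\psi_3$-refined, then $v(F_1(z))=k(z)-1\neq C$. This is a contradiction.
\item If $z$ is $\psi_1$-refined, then $v(F_1(z))=v(\alpha(z))$.
\item If $z$ is $\psi_4$-refined, then $v(F_1(z))=2k(z)-3-v(\alpha(z))$.
\end{itemize}
With the normalisation $v(\alpha(z))\le k(z)-3/2$, the slope of $z$ is $v(\alpha(z))=M$. In the $\psi_1$ case this gives $v(F_1(z))=M\neq C$, a contradiction. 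In the $\psi_4$ case it gives $v(F_1(z))=2k(z)-3-M>C$ by the choice of $z$, again a contradiction. Hence $\dim X=0$, and since $X$ is irreducible, $X(1)$ is a point.

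The only delicate step is the choice of $z$. We need infinitely many distinct weights among SK points in $U$, together with the bound $2k(z)-3-M>C$. Both follow from the finiteness of automorphic forms of bounded weight and level, exactly as in Proposition~\ref{prop:ref 2/3}. The bound $k(z)\ge M+3/2$ holds automatically, since the slope is at most $k(z)-3/2$. We also need to check that the footnoted boundary case $v(\alpha_z)=k(z)-3/2$ causes no ambiguity. There both labellings give $v(F_1(z))=k(z)-3/2=M$, so the argument is unchanged.
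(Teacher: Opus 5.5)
Your proof is correct and is essentially the paper's argument. The paper also works on a neighbourhood of $z_0$ where $v(F_1)\equiv C$, picks an SK point $z\neq z_0$ of slope $M$ whose weight satisfies $k(z)\neq C+1$ and $k(z)\neq (C+M+3)/2$, and obtains a contradiction from the table of $v(F_1)$-values in each refinement case. Your choice of $k(z)$ large enough that both inequalities hold strictly is simply a convenient way of avoiding those two weights.
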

\begin{proof}
This follows by the same argument as Proposition~\ref{prop: ref 1/4} with the extra assumption that $k(z) \neq (C+M+3)/2$ to derive a contradiction if $z$ is $\psi_4$-refined, noting that condition~(1) in loc. cit. is automatically satisfied.
\end{proof}

\begin{rem}\label{rem: semiordinary family}
We expect that in general $\psi_1$- (resp. $\psi_4$-) refined points deform in one-dimensional 
SK families, containing points excluded from $Z_{\psi_i}$ in assumption~(1) (resp.(2)) of Proposition~\ref{prop: ref 1/4}, for which $F_1$ interpolates the $U_p$-eigenvalue of the $(\omega(\eta_p)\alpha)(z)$ (resp. $(\omega(\eta_p) \alpha^{-1})(z) p^{2k(z)-3}$) refinement of $f_{z}$ for $\psi_1$- (resp. ($\psi_4$-) refined SK points. Indeed, when $v(\alpha(z_0))=0$, the modular form $f_{z_0}$ is $p$-ordinary, and the associated $\psi_1$-refined SK point can be interpolated in a one-dimensional ``semi-ordinary" family \cite[Prop.~4.2.5]{SU06}--assumption~(1) in Proposition~\ref{prop: ref 1/4} excludes the possibility that $X(1)$ is such a family. 
\end{rem}

\section{Pseudocharacters} \label{s:Pseudocharacters and the form of $T$}
We recall the general theory of pseudocharacters, following Taylor \cite{Tay91} and Bella\"iche--Chenevier \cite[\S1]{BC09}. A pseudocharacter is a central function $T : R \to A$ satisfying the $n$-dimensional pseudocharacter identity $S_{n+1}(T)=0$ for some integer $n$, where $R$ is a finitely generated $A$-algebra in which $n!$ is invertible and $A$ is a commutative ring (Definition~\ref{def: pseudochar}). The minimal such $n$ is the \emph{dimension} of $T$. 
\subsection{A key lemma}
Following Chenevier~\cite{Che11}, we recall the geometric structure of the rigid analytic spaces $X_n(G)$ that parametrises $n$-dimensional continuous semisimple representations of a profinite group $G$. This structure describes the reducible loci in $X_n$ and, via the universal property of $X_n$, allows us to control reducibility in the $p$-adic families that we study.

Let $G$ be a profinite group and $n \in \Z_{\ge 1}$. Assume that for every open subgroup $H \subset G$, the set of continuous homomorphisms $H \to \F_p$ is finite. Under this hypothesis, $X_n = X_n(G)$ exists as a finite-dimensional rigid analytic space over $\Q_p$ in the sense of Tate and represents the functor sending an affinoid $\Q_p$-algebra $S$ to the set of continuous $n$-dimensional pseudocharacters $G \to S$ in the sense of Taylor--Rouquier (cf.~\cite{Che11}). Concretely, the $S$-valued points of $X_n$ are precisely the continuous pseudocharacters $G \to S$ of dimension $n$.

The $\overline{\Q}_p$-points of $X_n$ correspond to conjugacy classes of continuous semisimple representations $G \to \GL_n(\overline{\Q}_p)$. For $x \in X_n$, let $k(x)$ denote its residue field and $\rho_x : G \to \GL_n(\overline{k(x)})$ the associated semisimple representation. If $\mathrm{Tr} : G \to \mathcal{O}(X_n)$ denotes the universal continuous pseudocharacter of dimension $n$, then $\mathrm{Tr}_x = \mathrm{tr}(\rho_x)$.

Let $r$, $a_1,\ldots,a_r \in \Z_{\ge 1}$ with $\sum a_i = n$. If $S$ is an affinoid $\Q_p$-algebra and $T_i : G \to S$ are continuous pseudocharacters of dimensions $a_i$, then $T_1 + \cdots + T_r$ is a continuous pseudocharacter of dimension $n$. Since this construction is functorial in $S$, it defines a morphism of rigid spaces 
\[
  \iota_{\underline{a}} :
  X_{a_1} \times \cdots \times X_{a_r} \longrightarrow X_n,
  \qquad \underline{a} = (a_1,\ldots,a_r).
\]
Set-theoretically,
\[
  X_n \setminus X_n^{\mathrm{irr}} = \bigcup_{\underline{a} \in A}
  \iota_{\underline{a}}(X_{a_1} \times \cdots \times X_{a_r}),
\]
where $A=\{\underline{a}=(a_1,\ldots,a_r)\mid r \in \Z_{> 1},a_i\in\Z_{\ge1},\ \sum a_i=n\}$ and $X_n^{\mathrm{irr}}$ is the set of irreducible points in the sense of Definition~\ref{def:a-red}. To describe the local structure of $X_n$ near reducible points and to control the corresponding reducible loci, we first generalise a technical lemma of Chenevier to $r \in \Z_{\geq 1}$.

\begin{defn} \label{def: pseudochar}
Let $A$ be a commutative ring and $R$ an $A$-algebra. Let $T:R \to A$ be an $A$-linear map which is central such that $T(xy)=T(yx)$ for all $x, y \in R$. For $n \in \Z_{\geq0}$, define a map $S_n(T):R^n \to A$ by $S_0(T):=1$ and
\begin{equation*}
    S_n(T)(x) := \sum_{\sigma \in S_n} \varepsilon(\sigma) T^\sigma(x), \qquad n \ge 1,
\end{equation*}
where $T^\sigma:R^n \to A$ is defined as follows. For $x = (x_1, \ldots, x_n) \in R^n$ and $\sigma=(j_1,\ldots ,j_m)$ a cycle, set $T^\sigma(x)=T(x_{j_1}, \ldots, x_{j_m})$\footnote{this is well-defined by \cite[\S 1.2.1]{BC09}.}. For $\sigma \in S_n$ with cycle decomposition $\sigma = \prod_{i=1}^r \sigma_i$ (including $1$-cycles), set $T^\sigma(x) = \prod_{i=1}^r T^{\sigma_i}(x)$. \par
$T$ is a \emph{pseudocharacter} on $R$ if there exists an integer $n$ such that $S_{n+1}(T)=0$ and $n!$ is invertible in $A$. The minimum such $n$ is the \emph{dimension} of $T$, and $T$ satisfies $T(1)=n$.
\end{defn}

\begin{defn} \label{def:a-red}
If $k$ is a field and $T$ a $k$-valued pseudocharacter of dimension $n$, then $T$ is \emph{$\underline{a} = (a_1, \ldots, a_r)$-reducible} if $T \otimes_k \overline{k}$ is the trace of a $\overline{k}$-representation $\rho = \rho_1 \oplus \cdots \oplus \rho_r$, where $\dim \rho_i = a_i$. If moreover the $\rho_i$ are irreducible (resp. pairwise non-isomorphic), then $T$ is \emph{precisely $\underline{a}$-reducible} (resp. \emph{multiplicity-free}). A point $\underline{u} \in X_n$ is (precisely) $\underline{a}$-reducible (resp. multiplicity-free) if the same is true for its conjugacy class of representations.
\end{defn}

\begin{lemma}[{\cite[Lem.~1.1]{Che11}}] \label{lem: 1.1}
Let $r, a_1,\ldots,a_r \in \Z_{\ge 1}$ with $\sum a_i = n$, and let $\underline{u} = (u_1,\ldots,u_r) \in X_{a_1} \times \cdots \times X_{a_r}$ correspond to a precisely $\underline{a}$-reducible and multiplicity-free representation $\rho_{\underline{u}} = \bigoplus_{i=1}^r \rho_{u_i}$. Let $L/k(x)$ be a finite extension over which $\rho_{\underline{u}}$ is defined, where $x = \iota_{\underline{a}}(\underline{u})$ is the image of $\underline{u}$ in $X_n$, and let $x' \in X_{n,L}(L)$ denote the corresponding point lying over $x$, where $X_{n, L}= X_n \times_{\Q_p} L$.  Then there exists an affinoid open subset $U\subset X_{n,L}$ containing $x'$ and a closed analytic subset $U_{\underline{a}}$ of $U$ whose points are exactly the $\underline{a}$-reducible points in $U$. Moreover all these points are precisely $\underline{a}$-reducible and $\mathrm{Tr}|_{U_{\underline{a}}}$ is a sum of $r$ $\mathcal{O}(U_{\underline{a}})$-valued pseudocharacters of dimensions $a_1, \ldots, a_r$.
\end{lemma}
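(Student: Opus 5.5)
Chenevier's lemma is a local statement about the deformation space $X_n$ near a multiplicity-free reducible point. The plan is to deduce it from the GMA/Cayley--Hamilton theory of \cite[\S1]{BC09}, applied to the universal pseudocharacter over the complete local ring at $x'$, and then spread the conclusion out to an affinoid neighbourhood.

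\textbf{Step 1: the GMA at $x'$.} Set $A = \widehat{\mathcal{O}}_{X_{n,L},x'}$, a complete noetherian local $L$-algebra, and let $R = A[G]/\ker(\mathrm{Tr})$. Since $\rho_{\underline u} = \bigoplus_i \rho_{u_i}$ has absolutely irreducible, pairwise non-isomorphic constituents defined over $L$, the residual pseudocharacter is multiplicity-free. By \cite[Thm.~1.4.4]{BC09}, $R$ is then a generalised matrix algebra. Concretely, there are orthogonal idempotents $e_1,\dots,e_r$ lifting the projectors onto the $\rho_{u_i}$. With them we get $R \simeq (A_{ij})_{i,j}$, where $A_{ii} \simeq M_{a_i}(A)$ and $A_{ij}$ are $A$-modules equipped with products $A_{ij}\otimes A_{jk}\to A_{ik}$. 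In these coordinates $\mathrm{Tr}$ is the trace of the GMA.

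\textbf{Step 2: the ideal of reducibility.} Let $I=\sum_{i\ne j}A_{ij}A_{ji}\subset A$. By \cite[Prop.~1.5.1]{BC09}, for any quotient $A\to B$ the pseudocharacter $\mathrm{Tr}\otimes B$ is a sum of $r$ pseudocharacters of dimensions $a_1,\dots,a_r$ if and only if $I\subset\ker(A\to B)$. Those summands are $x\mapsto \mathrm{tr}(e_ixe_i)$, and each one lifts $\mathrm{tr}\,\rho_{u_i}$. In that case the summands are residually absolutely irreducible of dimension $a_i$, so any $\underline a$-decomposition is forced to be precise.

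\textbf{Step 3: descend to an affinoid.} This is the step I expect to be the main obstacle. Since $A$ is noetherian, $I$ is finitely generated. Each generator is a finite sum of products $T(\cdot)$ of values of $\mathrm{Tr}$ on elements $e_i g e_j g' e_i$. The idempotents themselves are obtained by Hensel lifting roots of Cayley--Hamilton polynomials, which lies beyond algebraic functions on $X_n$. To get around this, I would first choose $g_1,\dots,g_m\in G$ whose images span $\bigoplus_i M_{a_i}(L)$; by Burnside this is possible, so the idempotents can be written in terms of these elements. Over a small affinoid $U\ni x'$ I would then use the pseudocharacter-valued characteristic polynomial of a suitable element $g$. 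Its factorisation into coprime factors of degrees $a_i$ at $x'$ extends, by Hensel's lemma for affinoid algebras after shrinking $U$, to a factorisation over $\mathcal{O}(U)$. This yields idempotents in $\mathcal{O}(U)[G]/\ker$, and the GMA structure (\cite[Lem.~1.4.3]{BC09}) holds over $\mathcal{O}(U)$ once the residual multiplicity-freeness is preserved, which it is after shrinking $U$. Define $I_U\subset\mathcal{O}(U)$ as in Step 2 and set $U_{\underline a}=V(I_U)$, a closed analytic subset of $U$. Applying the criterion of Step 2 at each point $y\in U$ (with $B=k(y)$, and via completion) shows that the points of $U_{\underline a}$ are exactly the $\underline a$-reducible points of $U$. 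Moreover, on $\mathcal{O}(U_{\underline a})=\mathcal{O}(U)/I_U$ we have $\mathrm{Tr}=\sum_i \mathrm{tr}(e_i\cdot e_i)$, a sum of $r$ pseudocharacters of dimensions $a_1,\dots,a_r$. Precise reducibility then follows from Step 2, after shrinking $U$ so that each of its points has residually irreducible summands.
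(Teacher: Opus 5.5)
There is a genuine gap, and it is in Step~3, where you expected it. It comes from your choice of the \emph{completed} local ring in Step~1. Your Steps~1--2 follow the paper's GMA and reducibility-ideal strategy; the trouble is what happens when you spread out.

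\textbf{What fails.} The Hensel-lifted factorisation of a characteristic polynomial gives you only the block idempotents $e_i$ over $\mathcal{O}(U)$. You then assert that the GMA structure holds over $\mathcal{O}(U)$ by \cite[Lem.~1.4.3]{BC09} ``once residual multiplicity-freeness is preserved''.
\begin{itemize}
\item That result needs a henselian \emph{local} base, and ``residually multiplicity-free'' has no meaning over the non-local ring $\mathcal{O}(U)$.
\item Idempotents alone do not give $e_iS_Ue_i\simeq M_{a_i}(\mathcal{O}(U))$. You need exactly this to define $I_U$ ``as in Step 2'' and for every pointwise argument afterwards.
\item Your Step~2 criterion, \cite[Prop.~1.5.1]{BC09}, concerns quotients of the local ring at $x'$. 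For $y\neq x'$ the field $k(y)$ is not such a quotient.
\item Completing at $y$ does not help. An arbitrary $\underline a$-reducible $y$ is not known to be residually multiplicity-free, nor to carry a GMA of type $\underline a$.
\end{itemize}
So you have not shown that every $\underline a$-reducible point of $U$ lies in $U_{\underline a}$. Your final ``shrink $U$ so that every point has irreducible summands'' also assumes an openness of irreducibility that you have not proved.

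\textbf{The missing idea.} Use the rigid local ring $A=\mathcal{O}_{X_{n,L},x'}$ itself, as the paper does. It is henselian and equals $\varinjlim_U\mathcal{O}(U)$ over affinoid neighbourhoods of $x'$.
\begin{itemize}
\item \cite[Thm.~1.4.4]{BC09} then applies to $S_A$ directly. The trace-compatible map $\prod_iM_{a_i}(A)\to S_A$ is specified by finitely many matrix units, which already live over some $\mathcal{O}(U)$. So the descent you flagged is automatic; your ``Hensel's lemma for affinoids'' is this henselianity in disguise.
\item With $e_iS_Ue_i=M_{a_i}(\mathcal{O}(U))$, define $I_{i,j}$ by $e_iS_Ue_jS_Ue_i=M_{a_i}(I_{i,j})$. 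Then argue at each $y\in U$ directly rather than via Prop.~1.5.1.
\item Suppose $T_y=\sum_jT^0_j$ with $\dim T^0_j=a_j$. On the block $M_{a_i}(k(y))$, each $T^0_j$ is an integer multiple of the trace \cite[Prop.~1.2.2]{BC09}. Write $T^0_j(e_i)=m_{ij}a_i$.
\item The row sums $\sum_jm_{ij}=1$ and column identities $\sum_im_{ij}a_i=a_j\geq1$ force $(m_{ij})$ to be a permutation matrix.
\item After reordering, $T^0_i(e_j)=0$ for $j\neq i$. In a representation over $\overline{k(y)}$ with trace $T^0_i$, $e_j$ is then an idempotent of trace $0$ in characteristic $0$, hence acts by $0$.
\item So $T^0_i$, which is the matrix trace on the $i$-th block, kills $e_iS_ye_jS_ye_i=M_{a_i}(I^0_{i,j})$, where $I^0_{i,j}$ is the image of $I_{i,j}$ in $k(y)$. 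This forces $I^0_{i,j}=0$.
\end{itemize}
Precise reducibility on $U_{\underline a}$ then follows because $k(y)[G]$ surjects onto each block (Jacobson density), with no further shrinking.
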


\begin{proof}
Let $\underline{a}=(a_1,\ldots,a_r)$ and $x=\iota_{\underline{a}}(\underline{u})$. Choose a finite extension $L/k(x)$ over which each $\rho_{u_i}$ is defined, which determines a point $x' \in X_{{n,L}}(L)$ lying over $x$ (since this corresponds to choosing an embedding $\tau:k(x) \xhookrightarrow{} L$), and let $A= \mathcal{O}_{X_{n,L},x'}$ be the local ring of $X_{n, L}$ at $x'$, with residue field $L$. Let $S=\mathcal{O}(X_n)[G]/\langle \mathrm{CH}_n(T)\rangle$ be the quotient of $\mathcal{O}(X_n)[G]$ by the two‑sided ideal generated by the degree‑$n$ Cayley–Hamilton relation for $\mathrm{Tr}$ (cf.~\cite[Ex.~1.2.4(i)]{BC09}). Then $\mathrm{Tr}$ factors through a pseudocharacter $S\to\mathcal{O}(X_n)$ of dimension $n$, and $S$ satisfies the Cayley--Hamilton identity of degree $n$. By \cite[Thm.~1.4.4]{BC09}, the algebra $S_A := S\otimes_{\mathcal{O}(X_n)} A$ is a GMA of type $(a_1,\ldots,a_r)$, since $\rho_{\underline{u}}$ is multiplicity‑free, $L$-valued, and precisely $\underline{a}$‑reducible. Thus there exists an $A$‑algebra morphism 
\[
  \psi_A : M_{a_1}(A)\times\cdots\times M_{a_r}(A)\longrightarrow S_A
\]
such that $\mathrm{Tr}\circ\psi_A$ is the usual matrix trace. Since $A$ is the inductive limit over affinoid neighbourhoods of $x'$ in $X_{n,L}$, after shrinking to such a neighbourhood $U$ the morphism $\psi_A$ descends to a morphism of $\mathcal{O}(U)$‑algebras
\[
  \psi_U : M_{a_1}(\mathcal{O}(U))\times\cdots\times
  M_{a_r}(\mathcal{O}(U))\longrightarrow
  S_U := S\otimes_{\mathcal{O}(X_n)}\mathcal{O}(U),
\]
where $\mathrm{Tr}\circ\psi_U$ is the usual matrix trace. We show that $U$ satisfies the required properties.

A two‑sided ideal of $M_k(B)$ over a commutative ring $B$ is of the form $M_k(I)$ for a unique ideal $I\subset B$. If $\psi_U$ had nonzero kernel, then its intersection with some $M_{a_i}(\mathcal{O}(U))$ would be a nonzero two‑sided ideal $M_{a_i}(I)$ on which the trace vanishes. By trace‑compatibility of $\psi_U$, this forces $I=0$, a contradiction. Thus $\psi_U$ is injective, and we henceforth view it as an inclusion. For each $i$, let $e_i\in S_U$ denote the idempotent corresponding to the identity matrix in the $i$-th factor and zero elsewhere.

For $i\neq j$, set $I'_{i,j}=e_i S_U e_j S_U e_i\subset e_i S_U e_i$. Since $e_i S_U e_i = M_{a_i}(\mathcal{O}(U))$, each $I'_{i,j}$ is of the form $M_{a_i}(I_{i,j})$ for a unique ideal $I_{i,j}\subset\mathcal{O}(U)$, and $I_{i,j}=\mathrm{Tr}(I'_{i,j})$; the ideal $I_{i,j}$ is intrinsic, in the sense that it does not depend on the chosen GMA presentation. Define ideals
\[
  I:=\sum_{i\neq j} I_{i,j}, \qquad I_i:=\sum_{j\neq i} I_{i,j}.
\]
Let $U_{\underline{a}}:=\mathrm{Sp}(\mathcal{O}(U)/I)$ be the closed analytic subset of $U$ cut out by $I$. If $\alpha\in e_i S_U e_j$ with $i\neq j$, then $\alpha=e_i \beta e_j$ for some $\beta \in S_U$ and $\mathrm{Tr}(\alpha)=\mathrm{Tr}(e_i \beta e_j)=\mathrm{Tr}(e_j e_i \beta)=0$, since $e_j e_i=0$. Thus $\mathrm{Tr}(e_i S_U e_j)=0$ for $i\neq j$, and for
all $s\in S_U$,
\begin{equation}\label{eq:tr}
  \mathrm{Tr}(s)=\sum_{i=1}^r \mathrm{Tr}(e_i s e_i).
\end{equation}
For each $i$, the map
\[
  \rho_i : S_U \longrightarrow
  e_i S_U e_i / \sum_{j\neq i} e_i S_U e_j S_U e_i
  = M_{a_i}(\mathcal{O}(U)/I_{i})
  \longrightarrow M_{a_i}(\mathcal{O}(U)/I),
  \quad s\mapsto e_i s e_i,
\]
is an $\mathcal{O}(U)$\nobreakdash-algebra morphism, since for $s,s'\in S_U$,
\[
  e_i s e_i\cdot e_i s' e_i - e_i s s' e_i
    = e_i s (e_i-1) s' e_i
    = -\sum_{j\neq i} e_i s e_j s' e_i \in M_{a_i}(I_i).
\]

Let $T_i$ denote the pseudocharacter $T_i(g)=\mathrm{Tr}_i(\rho_i(g))\in\mathcal{O}(U)/I$, where $\mathrm{Tr}_i$ is the usual matrix trace on $M_{a_i}$. Then \eqref{eq:tr} implies $\mathrm{Tr}(g)\equiv \sum_{i=1}^r T_i(g) \mod{I}.$ Each $T_i$ is continuous since $T_i(g)=\mathrm{Tr}(e_i g)\bmod I$. Hence $T \vert_{U_{\underline{a}}}$ is a sum of $r$ $\mathcal{O}(U_{\underline{a}})$-valued pseudocharacters of dimensions $a_1, \ldots, a_r$.

Conversely, let $y \in U$ be an $\underline{a}$-reducible point with specialisation $T^0 : G \to k(y)$ equal to $T^0=T_1^0 + \cdots + T_r^0$, where (after a finite extension of scalars as for $x$), each $T_i^0:G \to k(y)$ is a continuous pseudocharacter of dimensions $a_i$. Set $S_y=S_U\otimes_{\mathcal{O}(U)}k(y)$. To show that $y \in U_{\underline{a}}$, it suffices to prove that the image of $I$ in $k(y)$ is zero.

By \cite[\S1.2.4]{BC09}, the characteristic polynomial identity $P_{x,T^0}=\prod_i P_{x,T_i^0}$ implies that each $T_i^0$ factors through a pseudocharacter of $S_y$. Via the inclusion $\psi_y:=\psi_U\otimes k(y)$, this further factors through the $i$‑th block $M_{a_i}(k(y))$. By \cite[Prop.~1.2.2]{BC09}, pseudocharacters of $M_{a_i}(B)$ over a commutative $\Q$‑algebra $B$ are integer multiples of the matrix trace, so there exist $m_{i, j} \in \Z_{\ge 0}$ such that $T_j^0(e_i)=m_{i,j} a_i$. Since
\begin{equation*}
    \sum_{j=1}^r m_{i, j}a_i =  \sum_{j=1}^r T_j^0(e_i)=T^0(e_i)=a_i,
\end{equation*}
each row of $(m_{i,j})$ contains precisely one $1$. Similarly, since $T_j^0$ has dimension $a_j$,
\[
  a_j = T_j^0(1)=  T_j^0 \left( \sum_{i=1}^r e_i \right) = \sum_{i=1}^r T_j^0(e_i)=  \sum_{i=1}^r m_{i,j} a_i
\]
forces each column to contain exactly one $1$, and the nonzero entry in column $j$ lies in a row $i$ with $a_i=a_j$. Thus $(m_{i,j})$ is a permutation matrix, and after reordering we may assume that $T_i^0(e_i)=a_i$ and $T_i^0(e_j)=0$ for $j\neq i$.

For $s\in S_y$, we can write $s=\sum_{k,\ell} e_k s e_\ell$. By construction, $T_i^0$ vanishes on all components except the $i$‑th diagonal block, so $T_i^0(e_k s e_\ell)=0$ unless $k=\ell=i$. On the $i$‑th block, $T_i^0$ is the matrix trace, so $T_i^0(e_i s e_i)=\mathrm{Tr}(e_i s e_i)$. Since $e_i$ is an idempotent, $e_i s e_i=e_i s$, and therefore
\[
  T_i^0(s)=T_i^0(e_i s e_i)=\mathrm{Tr}(e_i s e_i)=\mathrm{Tr}(e_i s).
\]
In particular, for $j\neq i$ and $s,s'\in S_y$, since $e_j e_i=0$,
\[
  T_i^0(e_i s e_j s' e_i)=\mathrm{Tr}(e_i s e_j s' e_i)
  =\mathrm{Tr}(e_j s' e_i s e_i)=0.
\]
Thus $T_i^0$ vanishes on $e_i S_y e_j S_y e_i$. Since $e_i S_y e_j S_y e_i = M_{a_i}(I_{i,j}^0)$, where $I_{i,j}^0$ is the image of $I_{i,j}$ in $k(y)$, we obtain $I_{i,j}^0=0$ for all $i\neq j$. Hence the image of $I=\sum_{i\neq j} I_{i,j}$ in $k(y)$ is zero. Since $y\in U$ is a $\underline{a}$‑reducible, $\rho_y$ admits an injective morphism
\[
  \psi_y : M_{a_1}(\overline{k(y)})\times\cdots\times
  M_{a_r}(\overline{k(y)}) \longrightarrow
  S_U \otimes_{\mathcal{O}(U)} \overline{k(y)}. 
\]
such that $\mathrm{tr}(\rho_y)\circ\psi_y$ is the usual matrix trace. As in the argument for $\underline{u}$ (now over $\overline{k(y)}$), the only possible decomposition of $\mathrm{tr}(\rho_y)$ as a sum of $r$  pseudocharacters is into $r$ pseudocharacters of dimensions $a_i$. Each summand is irreducible, by the Jacobson density theorem: a representation $G\to\GL_{a_i}(\overline{k(y)})$ is irreducible if and only if the induced map $\overline{k(y)}[G]\longrightarrow M_{a_i}(\overline{k(y)})$ is surjective. In our setting, the block $e_i\bigl(S_U \otimes_{\mathcal{O}(U)} \overline{k(y)}\bigr)e_i$ identifies with $M_{a_i}(\overline{k(y)})$, and the image of $\overline{k(y)}[G]$ contains this block because $\psi_y$ is injective and trace-compatible. Hence each summand is irreducible. Thus every $\underline{a}$‑reducible point in $U$ lies in $U_{\underline{a}}$ and is precisely $\underline{a}$‑reducible in the sense of Definition~\ref{def:a-red}, as required.
\end{proof}

\subsection{Pseudocharacters for $p$-adic families}

By the universal property of $X_4$, the pseudocharacter $T : G_{\Q,S} \to \mathcal{O}(X)$ attached to a $p$-adic family $X(\phi)$ is a pullback of the universal pseudocharacter on $X_4$ along $X \to X_4$. This allows us to study reducibility of the generic pseudocharacter of $X(\phi)$ using the geometric structure of $X_4$.

Let $K=\mathrm{Frac}(\mathcal{O}(X))$ and let $T_\eta := T \otimes_{\mathcal{O}(X)} K : G_{\Q,S} \to K$ denote the \emph{generic pseudocharacter} of a $p$-adic family $X(\phi)$. By Taylor's theorem \cite[Thm.~1.2]{Tay91}, there exists a unique (up to isomorphism) semisimple representation $\rho^{\mathrm{gen}} : G_{\Q,S} \to \GL_4(\overline{K})$ whose trace is $T_\eta \otimes_K \overline{K}$. A priori, the precise reducibility types of $T_\eta$ in the sense of Definition~\ref{def:a-red} are $\mathcal{S} :=\{(4), (3,1), (2,2), (2,1,1), (1,1,1,1)\}$. 

To determine which of these types can occur, we use the geometry of the universal pseudocharacter space $X_4$. The morphism $f : X \to X_4$ induced by the universal property of $X_4$ pulls back the analytic loci in $X_4$ cut out by precise $\underline{a}$-reducibility. Passing between reducibility statements over different base fields requires showing that the irreducible constituents of a semisimple representation remain irreducible after extending scalars, which is precisely the content of the following lemma.

\begin{lemma}\label{lem:JD}
Let $F\subset L$ be fields, and choose algebraic closures so that $\overline F\subset \overline L$. Let $V$ be a finite-dimensional semisimple representation of $G$ over $\overline F$. Then $V\otimes_{\overline F}\overline L$ is irreducible as a $\overline L[G]$-module if and only if $V$ is irreducible as a $\overline F[G]$-module.
\end{lemma}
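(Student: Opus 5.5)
The natural route is the Jacobson density characterisation already used at the end of the proof of Lemma~\ref{lem: 1.1}: a finite-dimensional representation $W$ of $G$ over an algebraically closed field $k$ is irreducible if and only if the image of $k[G]$ in $\End_k(W)$ is all of $\End_k(W)$ (Burnside's theorem). Write $n=\dim_{\overline F}V$ and let $A\subset \End_{\overline F}(V)\simeq M_n(\overline F)$ be the $\overline F$-span of the image of $G$. Then the $\overline L$-span of the image of $G$ in $\End_{\overline L}(V\otimes_{\overline F}\overline L)\simeq M_n(\overline L)$ is exactly $A\otimes_{\overline F}\overline L$. This holds because $\overline L$-linear combinations of the matrices $\rho(g)$, which have entries in $\overline F$, span the same subspace as the extension of scalars of their $\overline F$-span. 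Moreover, $\dim_{\overline L}(A\otimes\overline L)=\dim_{\overline F}A$.

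The proof then proceeds in three steps. First, if $V$ is irreducible, Burnside gives $A=M_n(\overline F)$, so $A\otimes\overline L=M_n(\overline L)$. The image of $\overline L[G]$ is therefore the full matrix algebra, and no proper nonzero subspace of $\overline L^n$ is stable under it, so $V\otimes\overline L$ is irreducible. This direction needs no closedness hypothesis on $\overline L$, since a full matrix algebra acts irreducibly on its standard module. Second, for the converse, suppose $V$ is reducible. Then there is a proper nonzero $G$-stable subspace $W\subset V$, and $W\otimes_{\overline F}\overline L$ is a proper nonzero $G$-stable subspace of $V\otimes\overline L$. This direction is trivial and does not use semisimplicity. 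Third, I would remark that semisimplicity, although not logically needed, ensures that the decomposition of $V$ into irreducibles base-changes to a decomposition of $V\otimes\overline L$ into irreducibles of the same dimensions. This is the form in which the lemma is applied to compare reducibility types of $T_\eta$ over $K$, $\overline K$, and residue fields.

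The only point requiring care is the first step. Irreducibility over the algebraically closed field $\overline F$ is what lets us invoke Burnside and conclude $A=M_n(\overline F)$; over a non-closed field this fails, since $A$ could be a division algebra or a proper subfield of the matrix algebra. The identification of the $\overline L$-span with $A\otimes_{\overline F}\overline L$ is a routine linear-algebra check: choose an $\overline F$-basis of $A$ consisting of elements $\rho(g_i)$ and note that it stays linearly independent over $\overline L$, because $\overline L$ is flat over $\overline F$. I expect no genuine obstacle beyond making this dimension count explicit.
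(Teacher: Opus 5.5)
Your proposal is correct and is essentially the paper's argument: the paper's proof is the one-line citation ``this follows from the Jacobson density theorem,'' and your argument unpacks it. You use Burnside's theorem over the algebraically closed field $\overline F$ to get that the image of $\overline F[G]$ is all of $M_n(\overline F)$, so its $\overline L$-span is $M_n(\overline L)$. The converse is the trivial observation that a proper nonzero $G$-stable subspace $W$ yields the proper nonzero $G$-stable subspace $W\otimes_{\overline F}\overline L$. You are also right that the semisimplicity hypothesis is not needed.
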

\begin{proof}
    This follows from the Jacobson density theorem.
\end{proof}

\begin{prop} \label{prop: Zar dense from 1.1}
Let $X(\phi)$ be a positive-dimensional $p$-adic family with pseudocharacter $T: G_{\Q, S} \to \mathcal{O}(X)$ and let $\underline{a}=(a_1, \ldots, a_r) \in \mathcal{S}$. Suppose that $X$ contains a subset $Z' \subseteq Z$ of precisely $\underline{a}$-reducible points that accumulates at precisely $\underline{a}$-reducible point $y \in Z$. Then the generic pseudocharacter $T_\eta: G_{\Q, S} \to \mathrm{Frac}(\mathcal{O}(X))$ is precisely $\underline{a}$-reducible.
\end{prop}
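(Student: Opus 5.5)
Pull back the local structure of $X_4$ from Lemma~\ref{lem: 1.1} along the classifying morphism $f:X\to X_4$ induced by $T$. This produces an analytic neighbourhood of $y$ in which $T$ decomposes as a sum of pseudocharacters of dimensions $a_1,\ldots,a_r$. We then transport that decomposition to the generic point using Zariski density and irreducibility of $X$.

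\textbf{Step 1: local reducible locus.} Since $y\in Z$ is precisely $\underline{a}$-reducible and $\rho_y$ is multiplicity-free (by Definition~\ref{def:p-adic-family-GSp4}), Lemma~\ref{lem: 1.1} applies at $x=f(y)$. After base change to a finite extension $L$ over which $\rho_y$ is defined, it gives an affinoid neighbourhood $U\subset X_{4,L}$ of $x'$ and a closed analytic subset $U_{\underline{a}}\subset U$. The points of $U_{\underline{a}}$ are exactly the $\underline{a}$-reducible points of $U$, and on $U_{\underline{a}}$ the universal pseudocharacter splits as a sum of pseudocharacters of dimensions $a_i$.

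\textbf{Step 2: pullback to $X$.} Set $V=f_L^{-1}(U)$, an admissible open neighbourhood of (a point over) $y$ in $X_L$. Then $f_L^{-1}(U_{\underline{a}})$ is a Zariski closed analytic subset of $V$ containing every point of $Z'\cap V$. By the accumulation hypothesis, $V$ contains an affinoid neighbourhood $V'$ of $y$ in which $Z'\cap V'$ is Zariski dense, so $f_L^{-1}(U_{\underline{a}})\cap V'=V'$. Because $X$ is reduced, this means $T|_{V'}$ factors through $\mathcal{O}(U_{\underline{a}})$, hence is a sum $T_1+\cdots+T_r$ of $\mathcal{O}(V')$-valued pseudocharacters of dimensions $a_i$.

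\textbf{Step 3: passage to the generic point.} Here one must be careful, since $\mathrm{Frac}(\mathcal{O}(V'))$ is not literally $K=\mathrm{Frac}(\mathcal{O}(X))$.

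First, reduce to a single component. Because $X$ is irreducible, $V'$ is Zariski dense in $X$ (compare Remark~\ref{rem: simplifying assumptions}(3)). Hence restriction $\mathcal{O}(X)\to\mathcal{O}(V')$ is injective. Replace $V'$ by an irreducible component through $y$ of positive dimension; this is available because $\dim X>0$ and, for a family, $X$ is equidimensional. Its function field $K'$ contains $K$ (after base change to $L$), and $T_\eta\otimes_K\overline{K'}=\sum_i T_i\otimes\overline{K'}$.

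Next, apply Taylor's theorem to each $T_i\otimes\overline{K'}$. It gives representations $\rho_i$ of dimension $a_i$ with $\rho^{\mathrm{gen}}\otimes\overline{K'}\simeq\bigoplus\rho_i$, so $T_\eta\otimes\overline{K'}$ is $\underline{a}$-reducible. By uniqueness of semisimple representations with given trace, together with Lemma~\ref{lem:JD}, the decomposition of $\rho^{\mathrm{gen}}$ over $\overline K$ into irreducibles has the same dimension type after extending to $\overline{K'}$. So it suffices to show that each $T_i\otimes\overline{K'}$ is irreducible.

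\textbf{Step 4: precise reducibility (the main obstacle).} Suppose some $T_i\otimes\overline{K'}$ were reducible. Then, by the same Lemma~\ref{lem: 1.1}-type reasoning, or directly from the GMA structure where the vanishing of the ideals $I_{i,j}$ is a closed condition, the irreducible generic point would have to degenerate. More precisely, a nontrivial splitting of a Cayley–Hamilton quotient over $K'$ forces splitting at all points of a Zariski dense open subset of $V'$. That open set meets $Z'$, whose points are precisely $\underline{a}$-reducible, a contradiction.

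The delicate point is making "generically reducible implies reducible on a dense open" rigorous. The plan is to use that $\underline{a}'$-reducibility for a finer type $\underline{a}'$ is a closed condition. The cleanest route is Lemma~\ref{lem: 1.1} itself: its final assertion says every $\underline{a}$-reducible point of $U$ is precisely $\underline{a}$-reducible, so each summand at a point is irreducible. Irreducibility of a pseudocharacter is an open condition, its complement being cut out by the vanishing of finitely many determinants of traces via Jacobson density. Combined with specialisation at the generic point, this shows each $T_i$ is generically irreducible. Hence $T_\eta$ is precisely $\underline{a}$-reducible.
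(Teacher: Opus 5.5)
Your proposal is correct and follows essentially the same route as the paper. You pull back the locus $U_{\underline{a}}$ of Lemma~\ref{lem: 1.1} along $f_L$, use accumulation of $Z'$ at $y$ to get a neighbourhood lying entirely inside it, and then descend from the function field of that neighbourhood to $\mathrm{Frac}(\mathcal{O}(X))$ via Lemma~\ref{lem:JD}. Two parts of your argument go beyond the paper and supply justification it omits: Step~2 uses reducedness to obtain an honest decomposition $T=\sum_i T_i$ over the neighbourhood, and Step~4 uses the trace-determinant (Jacobson density) criterion to show each $T_i$ stays absolutely irreducible at the generic point because it is irreducible at $y$. Together these justify the step the paper only asserts, namely that pointwise precise $\underline{a}$-reducibility on $W$ makes $T_{K_W}$ precisely $\underline{a}$-reducible. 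Your passage to an irreducible component through $y$ also avoids the paper's tacit assumption that an irreducible affinoid neighbourhood of $y$ exists.
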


\begin{proof}
Let $f: X \to X_4$ be the morphism in the category of rigid spaces induced by the universal property of $X_4$, so that $T: G_{\Q, S} \to \mathcal{O}(X)$ is the pullback of the universal pseudocharacter $\mathrm{Tr}: G_{\Q, S} \to \mathcal{O}(X_4)$. Let $L/k(y)$ be a finite extension over which $\rho_{\underline{y}}$ is defined and let $y_L$ be the corresponding point lying over $y$.
By Lemma~\ref{lem: 1.1}, there exists an affinoid neighbourhood $U \subset X_{4,L}$ of $f_L(y_L)$ and a closed subset $U_{\underline{a}} \subset U$ consisting of the precisely $\underline{a}$-reducible points of $U$, where $f_L : X_L \to X_{4,L}$ denotes the base change of $f$. Since $Z'$ accumulates at $y$, the set $Z_L' := Z'\times_{\Q_p} L$ accumulates at $y_L$, and there exists an irreducible, reduced affinoid neighbourhood $W \subset X_L$ of $y_L$ such that $Z_L' \cap W$ is Zariski dense in $W$. Moreover $f_L^{-1}(U_{\underline{a}}) \cap W$ is closed in $W$, so taking closures inside $W$ gives
\[
W = \overline{Z_L' \cap W} \subset \overline{f_L^{-1}(U_{\underline{a}}) \cap W} = f_L^{-1}(U_{\underline{a}}) \cap W.
\]
Hence every point of $W$ is precisely $\underline{a}$-reducible. Thus, writing $T_{W}: G_{\Q, S} \to \mathcal{O}(W)$ and $K_W = \mathrm{Frac}(\mathcal{O}(W))$, the pseudocharacter $T_{K_W} := T_W \otimes_{\mathcal O(W)} K_W$ is precisely $\underline{a}$-reducible, so the unique (up to isomorphism) semisimple representation $\rho^{\mathrm{gen}}_W : G_{\Q,S} \to \GL_4(\overline{K_W})$ with trace $T_{K_W} \otimes_{K_W} \overline{K_W}$ (given by Taylor's theorem~\cite[Thm.~1.2]{Tay91}) is precisely $\underline{a} = (a_1, \ldots, a_r)$-reducible. Since $K := \mathrm{Frac}(\mathcal{O}(X)) \hookrightarrow K_W$, choosing algebraic closures $\overline{K}\subset \overline{K_W}$, we obtain $T_{K_W} \otimes_{K_W} \overline{K_W} = T_\eta \otimes_{K} \overline{K_W}$, and so $\rho^{\mathrm{gen}}_W \simeq \rho^{\mathrm{gen}}\otimes_{\overline K}\overline{K_W}$. Applying Lemma~\ref{lem:JD} to each irreducible constituent of $\rho^\mathrm{gen}$ with $F = K$ and $L =K_W$ shows that $T_\eta$ is precisely $\underline{a}$‑reducible.
\end{proof}
\noindent
It is also important to understand how reducibility behaves under specialisation. The next lemma shows that $\underline{a}$-reducibility of $T_\eta$ is inherited by $T_z$ for each $z\in Z$.

\begin{defn}
\label{def:res-mult-free}
Let $A$ be a henselian local ring with maximal ideal $\mathfrak{m}$ and residue field $k= A/\mathfrak{m}$. A pseudocharacter $T: R \to A$ is \emph{residually multiplicity-free} if $T \mod \mathfrak{m}$ is multiplicity-free. 
\end{defn}

\begin{rem}
\label{rem:res-mult-free-z}
Let $X(\phi)$ be a $p$-adic family as in Definition~\ref{def:p-adic-family-GSp4}, and let $z \in Z$. The residual Galois representation $\rho_z$ is defined over a finite extension $k_z/k(z)$, where $k(z)$ is the residue field of $\mathcal{O}(X)$ at $z$ (\cite[244]{Ski09}). By Definition~\ref{def:p-adic-family-GSp4}, $\rho_z$ is multiplicity-free, so $T_z = \ev_z\circ T$ is multiplicity-free. Let $X_{k_z} := X \times_{\Q_p} k_z$ and denote by $z' \in X_{k_z}(k_z)$ the corresponding point lying over $z$ and by $A_z := \mathcal{O}_{X_{k_z},z'}$ the local ring of $X_{k_z}$ at $z'$, with residue field $k_z$. Then $T_{A_z} := T \otimes_{\mathcal{O}(X)} A_z$ is residually multiplicity-free in the sense of Definition~\ref{def:res-mult-free}. 
\end{rem}

\begin{prop} \label{prop: T_eta to T_z}
Let $\underline{a} = (a_1, \ldots, a_r) \in \mathcal{S}$ and suppose that the generic pseudocharacter $T_\eta$ is $\underline{a}$-reducible. Then for all $z \in Z$, $T_z : G_{\Q,S} \to k_z$ is $\underline{a}$-reducible.
\end{prop}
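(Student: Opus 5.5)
We will show that $\underline{a}$-reducibility of $T_\eta$ propagates to every classical point by encoding it as the vanishing of finitely many analytic functions on $X$. These functions vanish generically, hence identically, hence at each $z$. The natural tool is the characteristic polynomial. For $g\in G_{\Q,S}$ let $P_g(t)\in\mathcal{O}(X)[t]$ be the degree-$4$ characteristic polynomial attached to $T$. Its coefficients are polynomials in $T(g),T(g^2),T(g^3),T(g^4)$ with denominators dividing $4!$, via Newton's identities.

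\emph{Step 1: factorisation over the normalisation.} Since $T_\eta$ is $\underline{a}$-reducible, $T_\eta\otimes\overline K=\sum_i\mathrm{tr}(\rho_i)$ with $\dim\rho_i=a_i$. All $\rho_i$ are defined over a finite extension $K'/K$. Let $X'\to X$ be the normalisation of $X$ in $K'$; this is a finite surjective map of rigid spaces, $X$ being reduced and irreducible. The traces $T_i:=\mathrm{tr}\rho_i$ have values in $K'$. Their values on $G_{\Q,S}$ are integral over $\mathcal{O}(X)$, since their characteristic polynomials divide $P_g$ and hence have integral roots. Therefore $T_i$ takes values in $\mathcal{O}(X')$, by normality. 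Each $T_i$ is a pseudocharacter of dimension $a_i$, since $S_{a_i+1}(T_i)=0$ holds in $K'\supset\mathcal{O}(X')$. Continuity of the $T_i$ follows from their integrality and boundedness, or alternatively from Newton's identities applied to the finitely many $\rho_i$-coefficients. Moreover $T\otimes\mathcal{O}(X')=\sum_iT_i$, because the equality holds in $K'$ and $\mathcal{O}(X')\subset K'$.

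\emph{Step 2: specialisation.} Given $z\in Z$, choose a point $z'\in X'$ above $z$, which exists by surjectivity of the finite map. Its residue field $k(z')$ is finite over $k(z)$. Evaluating at $z'$ gives $T_z\otimes k(z')=\sum_i\ev_{z'}\circ T_i$, and each $\ev_{z'}\circ T_i$ is a $k(z')$-valued pseudocharacter of dimension at most $a_i$. Taylor's theorem \cite[Thm.~1.2]{Tay91} realises each one, over $\overline{k(z')}$, as the trace of a semisimple representation of dimension $a_i$. Since $T_i(1)=a_i$ in characteristic $0$, the dimension is exactly $a_i$. The direct sum of these representations is semisimple with trace $T_z$. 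By uniqueness in Taylor's theorem it is isomorphic to $\rho_z\otimes\overline{k(z')}$, so $T_z$ is $\underline{a}$-reducible in the sense of Definition~\ref{def:a-red}. If desired, Lemma~\ref{lem:JD} handles the passage between $\overline{k(z)}$ and $\overline{k(z')}$.

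The main obstacle is Step 1. We must make sure that the normalisation of a rigid space in a finite extension of its function field behaves as expected: it should be finite over $X$, surjective, and have the generic trace decomposition land in its ring of functions. If that globally is awkward, we would work locally instead. We would take an affinoid neighbourhood of $z$ and apply the same integrality argument there. The alternative is Remark~\ref{rem:res-mult-free-z}: use that $T_{A_z}$ is residually multiplicity-free, so that by \cite[Thm.~1.4.4]{BC09} $A_z$ carries a GMA structure. In that GMA, reducibility at the generic point forces the reducibility ideals $I_{i,j}$ to vanish, after grouping blocks to match the type $\underline{a}$. Their vanishing then specialises to $z$ directly.
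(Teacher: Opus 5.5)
Your argument is correct in outline, but it takes a genuinely different route from the paper's.

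\textbf{The paper's route.} The paper works in the rigid local ring $A_z=\mathcal{O}_{Y,z'}$. It uses that $T_{A_z}$ is residually multiplicity-free, which holds only because $z\in Z$. This lets it invoke the reducibility ideal $\mathcal{I}_{\mathcal{P},z}$ of \cite[Prop.~1.5.1]{BC09}. The ideal dies in $K_z=\mathrm{Frac}(A_z)$ because $T_{\eta,z}$ is $\underline{a}$-reducible, so $\mathcal{I}_{\mathcal{P},z}=0$ by $A_z\hookrightarrow K_z$, and specialising at $J=\mathfrak{m}_z$ finishes. This is essentially the GMA alternative in your last paragraph.

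\textbf{Your route and what it buys.} Your main argument splits $T_\eta$ over a finite extension $K'$ and uses integrality of the roots of $P_g$ to put the summands $T_i$ on a finite cover. You then specialise at a point over $z$ via lying-over and Taylor's theorem. This avoids GMAs, henselian local rings and multiplicity-freeness, so it proves the specialisation statement at every point of $X$, not only on $Z$. It also avoids translating the dimension vector $\underline{a}$ into a partition of the residual constituents, which is how \cite[Prop.~1.5.1]{BC09} is actually phrased. Nor does it rely on injectivity of $A_z\to K_z$. The price is the rigid-geometric input you already flag.

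\textbf{Points to make precise.}
\begin{enumerate}
\item The existence of a finite $K'$ is not automatic for an infinite group. It holds because $K[G_{\Q,S}]/\ker T_\eta$ is a finite-dimensional semisimple $K$-algebra, which is split by a finite extension.
\item In the local version of Step~1, an affinoid neighbourhood of $z$ may be reducible, so pass to an irreducible component $U_0\ni z$.
\begin{itemize}
\item Nonzero elements of $\mathcal{O}(X)$ stay nonzero in the domain $\mathcal{O}(U_0)$. The normalisation of the irreducible $X$ is connected, so a nonzero function cannot vanish on a local branch. Hence $K\hookrightarrow\mathrm{Frac}(\mathcal{O}(U_0))$ and the generic reducibility transfers.
\item The integral closure of $\mathcal{O}(U_0)$ in the splitting field is finite, since affinoid algebras are excellent. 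It surjects onto $U_0$ by lying-over.
\end{itemize}
\item Replace ``$\mathcal{O}(X')\subset K'$'' by the ring map from the integral closure of $\mathcal{O}(X)$ in $K'$ to the functions on the cover. That map is what carries the identities $T=\sum_i T_i$ and $S_{a_i+1}(T_i)=0$.
\item The continuity remark can be dropped, since Definition~\ref{def:a-red} does not require continuity.
\end{enumerate}
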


\begin{proof}
Let $k_z$, $\rho_z$ and $z' \in X_{k_z}(k_z)$ be as in Remark~\ref{rem:res-mult-free-z} and let $Y$ be the irreducible component of $X_{k_z}$ containing $z'$. Let $A_z := \mathcal{O}_{Y,z'}$ be the local ring of $Y$ at $z'$ and denote by $K_z = \mathrm{Frac}(A_z)$ its fraction field, $\mathfrak m_z$ its maximal ideal, and $k_z$ its residue field. Set $T_{A_z} := T \otimes_{\mathcal{O}(X)} A_z$ and  $T_{\eta,z} := T_{A_z} \otimes_{A_z} K_z$. Since $K\hookrightarrow K_z$ as $\mathcal{O}(X)\hookrightarrow A_z$, choosing algebraic closures $\overline{K} \xhookrightarrow{} \overline{K_z}$, we have  $T_{\eta,z}\otimes_{K_z}\overline{K_z}\simeq T_\eta\otimes_K\overline{K_z}$. Hence $T_{\eta,z}$ is $\underline a$-reducible, since $T_\eta$ is. By Remark~\ref{rem:res-mult-free-z}, $T_{A_z}$ is residually multiplicity-free, so \cite[Prop.~1.5.1]{BC09} applies to $T_{A_z}$ with the partition $\mathcal{P}=(a_1,\ldots,a_r)$, and gives an ideal $\mathcal{I}_{\mathcal{P},z}\subset A_z$ such that for every ideal $J\subset A_z$,  $T_{A_z}\otimes_{A_z} A_z/J$ is $\underline a$-reducible if and only if $\mathcal{I}_{\mathcal{P},z}\subset J$. Since $T_{\eta,z}$ is $\underline{a}$-reducible, the image of $\mathcal{I}_{\mathcal{P},z}$ in $K_z$ is zero, hence $\mathcal{I}_{\mathcal{P},z}=0$ in $A_z$ by injectivity of $A_z \to K_z$. Taking $J = \mathfrak m_z$ shows that $T_z = T_{A_z} \otimes_{A_z} k_z$ is $\underline{a}$-reducible, as required. 
\end{proof}

\subsection{Non-$(2,1,1)$-reducibility of $T_\eta$}
We now show that the generic pseudocharacter of a $p$-adic family through an SK point cannot be $(2,1,1)$-reducible. The argument combines the extension and specialisation of reducibility (Propositions~\ref{prop: Zar dense from 1.1} and \ref{prop: T_eta to T_z}), Saito--Kurokawa rigidity (\S \ref{s: Rigidity for families of SK points}), and the structure of SK points and Arthur’s classification (\S\ref{s: Arthur's Classification}).
\begin{defn} \label{def: points}
Let $X(\phi)$ be a $p$-adic family. A point $z=(\Pi,\psi_\mathcal{R})\in Z$ is a \emph{type $(x)$ point} if $\Pi$ is of type~$(x)$ in the sense of Arthur's classification.
\end{defn}

\begin{prop} \label{prop: T-not-(2,1,1)}
Let $X(1)$ be a positive-dimensional $p$-adic family through an SK point $z_0$ as in Proposition \ref{prop:ref 2/3}. Then $T_\eta$ is not $(2,1,1)$-reducible.
\end{prop}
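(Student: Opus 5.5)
Argue by contradiction: assume $T_\eta$ is $(2,1,1)$-reducible and show that the SK points of $Z$ then accumulate at $z_0$. Proposition~\ref{prop:ref 2/3} would then force $X(1)$ to be a point, contradicting $\dim X>0$.

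\textbf{Step 1 (specialise the decomposition).} By Proposition~\ref{prop: T_eta to T_z}, every specialisation $T_z$, $z\in Z$, is $(2,1,1)$-reducible. So for each classical point $z=(\Pi,\psi_{\mathcal R})$, the semisimple representation $\rho_\Pi$ has at least two one-dimensional summands, and these are Hodge--Tate characters of $G_{\Q,S}$. Better, I would like the decomposition in families. After shrinking to an affinoid neighbourhood of $z_0$ (possibly after a finite extension of scalars), I would use the GMA structure from Lemma~\ref{lem: 1.1}, pulled back to $X$ as in the proof of Proposition~\ref{prop: Zar dense from 1.1}, together with the residual multiplicity-freeness at $z_0$ (Remark~\ref{rem:res-mult-free-z}). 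The goal is to write $T = T_2 + \chi_1 + \chi_2$ near $z_0$, with $\chi_1,\chi_2$ continuous characters $G_{\Q,S}\to \mathcal{O}(U)^\times$ specialising at $z_0$ to $\epsilon^{-1},\epsilon^{-2}$. Since $\epsilon^{-1},\epsilon^{-2},\rho_\mu$ are pairwise distinct, the residual decomposition is multiplicity-free, so \cite[Prop.~1.5.1]{BC09} over the local ring should give this.

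\textbf{Step 2 (characters in the family are rigid).} Continuous characters of $G_{\Q,S}$ factor through $\Gal(\Q(\zeta_{N p^\infty})/\Q)$. At every classical point $z$, $\chi_i(z)$ is a crystalline-at-$p$ (Definition~\ref{def: aut rep of Galois type}) Hodge--Tate character with fixed conductor, hence of the form $\omega_z\epsilon^{n_i(z)}$ with $\omega_z$ of finite order and bounded conductor. Interpolation of these characters then forces $\chi_i = \omega_i\epsilon^{n_i(z)}$ on the classical points of a connected neighbourhood, with $\omega_i$ constant. The first constraint comes from Lemma~\ref{lem: pseudo involution}: $T=T^\tau$ with $\psi=\epsilon^{-3}$, and at $z_0$ the $\tau$-involution swaps $\epsilon^{-1}$ and $\epsilon^{-2}$. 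So $\chi_1\chi_2=\epsilon^{-3}$ on $U$, and $T_2$ is self-dual with similitude $\epsilon^{-3}$. The second constraint is that the Hodge--Tate weights of $\chi_i$ vary analytically along $X$ while being integral at classical points. Since they are the Sen weights of a continuous family of characters, they are locally constant near $z_0$. Hence $\chi_1=\omega\epsilon^{-1}$ and $\chi_2=\omega^{-1}\epsilon^{-2}$, with $\omega^2=1$ by the trivial central character, on all classical points near $z_0$.

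\textbf{Step 3 (Arthur's classification).} Take $z\in Z\cap U$. Then $\rho_\Pi \simeq \rho_2\oplus\omega\epsilon^{-1}\oplus\omega\epsilon^{-2}$. I run through the types (a)--(f) of \S\ref{s: Arthur's Classification} and compare with $\mathrm{rec}_{\mathrm{GT}}$ at unramified places:
\begin{itemize}
\item Types (a), (b) and (c) have no one-dimensional Galois summands (cuspidal $\GL_4$, two $\GL_2$ cusp forms, or an irreducible $\mu\otimes$ two-dimensional piece), so they are excluded.
\item Type (f) gives four characters with consecutive weights, which is incompatible with a two-dimensional $T_2$ that is residually $\rho_\mu$ and irreducible. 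This is excluded by precise reducibility near $z_0$, via the closedness in Lemma~\ref{lem: 1.1}.
\item Type (e) is also excluded: an irreducible $T_2$ cannot specialise to characters on a dense set, by Proposition~\ref{prop: Zar dense from 1.1} applied to $(1,1,1,1)$, which would contradict precise $(2,1,1)$-reducibility at $z_0$.
\item Hence $z$ is of type (d), i.e.\ Saito--Kurokawa (up to the twist by $\omega$).
\end{itemize}
So every classical point of $U$ is an SK point, $Z_{\mathrm{SK}}$ accumulates at $z_0$, and Proposition~\ref{prop:ref 2/3} gives the contradiction.

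\textbf{Main obstacle.} I expect Step 2 and the use of precise reducibility in Step 3 to be the hardest. Step 1 only gives $(2,1,1)$-reducibility at each $z$, possibly with $\rho_2$ splitting further. To handle this I would first apply Proposition~\ref{prop: Zar dense from 1.1} to conclude that $T_\eta$ is \emph{precisely} $(2,1,1)$-reducible, or else not $(2,1,1)$ but $(1,1,1,1)$. The $(1,1,1,1)$ case is dispatched separately by the same character-rigidity argument: it would force $\rho_\mu$ to be reducible, which is impossible for the cuspidal $\mu$.
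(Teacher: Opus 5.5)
Your argument is correct in outline but follows a different route from the paper. You reach the same contradiction, that $Z_{\mathrm{SK}}$ accumulating at $z_0$ is incompatible with Proposition~\ref{prop:ref 2/3}, but more directly.

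\textbf{The paper's route.} The paper argues indirectly. Assuming $(2,1,1)$-reducibility, Arthur's classification gives $Z=Z_{ef}\sqcup Z_{\mathrm{SK}}$. Since $\dim X>0$, Proposition~\ref{prop:ref 2/3} says $Z_{\mathrm{SK}}$ cannot accumulate at $z_0$. A topological argument then produces an auxiliary point $z_1$ at which $Z_{ef}$ accumulates. Proposition~\ref{prop: Zar dense from 1.1} with $(1,1,1,1)$ makes $T_\eta$ totally reducible, and Proposition~\ref{prop: T_eta to T_z} transfers this to $T_{z_0}$, contradicting irreducibility of $\rho_\mu$.

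\textbf{Your route.} You use the pointwise clause of Lemma~\ref{lem: 1.1} at the multiplicity-free, precisely $(2,1,1)$-reducible point $z_0$. After the finite base change, every $(2,1,1)$-reducible point in a neighbourhood of $z_0$ is \emph{precisely} $(2,1,1)$-reducible. Combined with Proposition~\ref{prop: T_eta to T_z}, no point of $Z$ near $z_0$ can be of type (e) or (f). Four characters cannot also be the trace of an irreducible $2$-dimensional summand plus two characters, by Brauer--Nesbitt. Types (a)--(c) are excluded as in the paper, so $Z\cap V=Z_{\mathrm{SK}}\cap V$ for small $V$ and $Z_{\mathrm{SK}}$ accumulates at $z_0$.

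\textbf{Comparison.} Your route avoids the point $z_1$, the delicate choice of a neighbourhood of $z_1$ missing $Z_{\mathrm{SK}}$, and the second pass through Propositions~\ref{prop: Zar dense from 1.1} and~\ref{prop: T_eta to T_z}. The price is reliance on the stronger local conclusion of Lemma~\ref{lem: 1.1}, namely openness of irreducibility of the GMA blocks $e_iS_Ue_i\simeq M_{a_i}(\mathcal{O}(U))$. The paper only uses the closedness part of that lemma, through Proposition~\ref{prop: Zar dense from 1.1}.

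\textbf{Step 2 should be dropped.} Steps 1--2 are unnecessary, and Step 2 is wrongly justified. An analytic Sen weight that is integral on a Zariski-dense, accumulating set of points need not be locally constant. The family of characters $\kappa\circ\epsilon$, with $\kappa$ ranging over continuous characters of $\Z_p^\times$, is a counterexample. So you cannot derive $\chi_1=\omega\epsilon^{-1}$, $\chi_2=\omega\epsilon^{-2}$ this way; that shape is a consequence of Step 3, not an input. Step 3 only needs $(2,1,1)$-reducibility of each $T_z$ and the precise clause of Lemma~\ref{lem: 1.1}.

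\textbf{Type (e) needs the same argument as type (f).} Excluding type (e) via Proposition~\ref{prop: Zar dense from 1.1} only shows that type (e)/(f) points cannot accumulate. That does not give your stated conclusion that every classical point of $U$ is SK; you would then need the paper's topological argument. Apply instead the same Lemma~\ref{lem: 1.1} argument you use for type (f).

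\textbf{Your ``main obstacle'' paragraph.} The worry that $\rho_2$ splits at individual points is resolved locally by that same clause. Knowing only that $T_\eta$ is precisely $(2,1,1)$-reducible would not rule out isolated totally reducible specialisations. The $(1,1,1,1)$ case also goes via Proposition~\ref{prop: T_eta to T_z} at $z_0$, not via character rigidity.
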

\begin{proof}
Suppose that $T_\eta$ is $(2,1,1)$-reducible. Then by Proposition~\ref{prop: T_eta to T_z}, $T_z$ is $(2,1,1)$-reducible for all $z\in Z$. Since type $(a)$ points are irreducible \cite[Thm.~1.1]{Wei22} and type $(b)$/$(c)$ points are precisely $(2,2)$-reducible by the irreducibility of Galois representations associated to cuspidal modular forms, Arthur's classification implies that $Z=Z_{ef} \sqcup Z_\mathrm{SK}$, where $Z_{ef}$ (resp.\ $Z_{\mathrm{SK}}$) denotes the subset of $Z$ consisting of type $(e)/(f)$ (resp.\ SK) points, cf. \S\ref{s: Arthur's Classification}.

Since $Z$ accumulates at $z_0$ by definition and $Z_{\mathrm{SK}}$ does not by Proposition~\ref{prop:ref 2/3}, there exists an affinoid neighbourhood $V$ of $z_0$ such that $Z\cap V$ is dense in $V$ but $Z_{\mathrm{SK}}\cap V$ is not dense in $V$. It follows that $Z_{ef}\cap V = (Z\cap V)\setminus (Z_{\mathrm{SK}}\cap V)$ is dense in $V$, and hence nonempty.

Choose $z_1\in Z_{ef}\cap V$. Since $Z_{\mathrm{SK}}\cap V$ is not dense, there exists a sufficiently small affinoid neighbourhood $W\subseteq V$ of $z_1$ such that $W\cap Z_{\mathrm{SK}}=\emptyset$, and hence $Z\cap W$ is dense in $W$ and consists entirely of type $(e)/(f)$ points. Thus $Z_{ef}$ accumulates at $z_1$.

By Arthur's classification, type $(e)/(f)$ points correspond to sums of four characters, so are $(1,1,1,1)$-reducible. Thus $Z' := Z_{ef}$ is a subset of precisely $(1,1,1,1)$-reducible points that accumulates at $z_1$. Applying Proposition~\ref{prop: Zar dense from 1.1} with $y = z_1$ and $\underline{a} = (1,1,1,1)$ implies that $T_\eta$ is $(1,1,1,1)$-reducible. Proposition~\ref{prop: T_eta to T_z} implies that $T_{z_0}$ is also $(1,1,1,1)$-reducible, contradicting the precise $(2,1,1)$-reducibility
of $z_0$. Thus $T_\eta$ is not $(2,1,1)$-reducible. 
\end{proof}

\subsection{The Kisin property}
To prove $p$-adic rigidity for $z_0$ we require that the extensions arising from $T$ are crystalline at $p$. This is ensured by the \emph{Kisin property}, an interpolation property for crystalline Frobenius eigenvalues.
\begin{defn}
\label{def:realisation}
A \emph{realisation} of a pseudocharacter $T\colon G_{\Q,S}\to \mathcal{O}(X)$ over $\mathcal{O}_z$ is a torsion-free finite $\mathcal{O}_z$-module $M$ with a continuous action of $G_{\Q,S}$ (with respect to the inverse limit topology on $G_{\Q,S}$, cf. Remark~\ref{rem: topology}) such that for all $g\in G_{\Q,S}$, the trace $\mathrm{tr}\bigl(g\mid M\otimes_{\mathcal{O}_z}\mathrm{Frac}(\mathcal{O}_z)\bigr)$ lies in $\mathcal{O}_z$ and equals the germ of $T(g)$ at $z$.
\end{defn}

\begin{rem}\label{rem: topology}
If $A$ is the local ring of a rigid analytic space at a closed point, then there is a natural way to equip any finite type $A$-module $M$ with a Hausdorff $A$-module topology, cf.\ \cite[\S1.5.5]{BC09}; this is the topology with which we equip $M$. The action of $G_{\Q,S}$ on $M$ is continuous when the map $G_{\Q,S}\times M \to M$ is continuous for the product topology. 
\end{rem}

\begin{defn}
\label{def:KP}
A $p$-adic family $X(\phi)$ satisfies the \emph{Kisin property} at a point $z\in X$ with residue field $k(z)$ if for any realisation $M$ of $T$ over $\mathcal{O}_z$, writing $M_z = M \otimes k(z)$, the following holds: 

\smallskip
\noindent
If $\dim \Dcris\left(M_z^{\mathrm{ss}}(\kappa_1(z))\right)^{\varphi=F_1(z)} \le 1$, then $\dim \Dcris\left(M_z(\kappa_1(z))\right)^{\varphi=F_1(z)} = 1.$
\end{defn}
\begin{rem} \label{rem:KP-def}
Our use of torsion-free realisations and the formulation of the Kisin property follow the philosophy of Bella\"iche \cite[Rem.~6]{Bel10}.
\end{rem}

\section{Complete rigidity}
\label{s:tot-rig}
We now describe the hypotheses on the SK point $z_0=(\pi,\psi_i)$ under which we establish complete rigidity. Assume that $z_0$ is as in Proposition~\ref{prop:ref 2/3}, and that $\pi$ is noncuspidal (equivalently, $S_{\mathrm{Sch}}=\emptyset$ and $L(1/2,\mu')\neq 0$). In particular, the choice of $S_{\mathrm{Sch}}$ implies that $z_0$ is nongeneric, in the sense that $\pi_{\ell}$ is nongeneric for every $\ell\neq p$ (cf. Remark~\ref{rem:genericity}). Assume moreover that $z_0$ satisfies
\begin{enumerate}[leftmargin = 2em]
    \item[(St)] for every prime $\ell\neq p$ such that $\mu_{\ell}$ is an unramified twist of Steinberg, $\mu_{\ell}$ is the non-trivial quadratic twist. In this case, the Atkin--Lehner eigenvalue $w_\ell$\footnote{As a newform of trivial nebentypus, $f \in S_{2k-2}(\Gamma_0(N))$ is an eigenvector for the Atkin--Lehner involution $W_\ell$ for each finite prime $\ell \mid N$: $W_\ell f = w_\ell(f) f$ with $w_{\ell} = w_\ell(f) \in \{\pm 1\}$. If $k=2$ and the $q$-expansion of $f$ has rational coefficients, then the corresponding elliptic curve has multiplicative reduction at $\ell$, nonsplit precisely when $w_\ell = 1$.} is $1$ (cf. Lemma~\ref{lem:no-epsd}).
\end{enumerate}

\begin{lemma} \label{lem:Selmer-vanishing}
$H_f^1(\Q,\rho_\mu(2))=H_f^1(\Q, \overline{\Q}_p(1))=0$.
\end{lemma}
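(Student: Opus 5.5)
The two vanishing statements are independent, and I will treat them separately.

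\emph{The group $H_f^1(\Q,\overline{\Q}_p(1))$.} I will use Kummer theory. Since the Bloch--Kato groups commute with finite extension of scalars, it suffices to work with $\Q_p$-coefficients and extend scalars at the end. The standard computation (Bloch--Kato, Example~3.9) identifies $H_f^1(\Q,\Q_p(1))$ with $\mathcal{O}_{\Q}^\times\otimes_{\Z}\Q_p = \Z^\times\otimes\Q_p$. Here is how that identification goes.

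\begin{itemize}
\item Kummer theory identifies $H^1(G_{\Q,S},\Q_p(1))$ with $\Z_S^\times\otimes\Q_p$.
\item For $\ell\neq p$, the unramified condition at $\ell$ forces the $\ell$-adic valuation to vanish. Indeed, $H^1_{\mathrm{ur}}(\Q_\ell,\Q_p(1))=0$, because $\Q_p(1)^{I_\ell}=\Q_p(1)$ has no Frobenius coinvariants.
\item At $p$, the condition $H^1_f(\Q_p,\Q_p(1))=\Z_p^\times\otimes\Q_p$ kills the $p$-adic valuation.
\end{itemize}

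This leaves $\Z^\times\otimes\Q_p=\{\pm1\}\otimes\Q_p=0$, which is exactly the point flagged in Remark~\ref{rem: F=Q}.

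\emph{The group $H_f^1(\Q,\rho_\mu(2))$.} By Lemma~\ref{lem:SK-cohomological-structure}, $\rho_\mu(2)=\widetilde\rho_\mu(1)$ has Hodge--Tate weights $\{1-k,k-2\}$. The first step is to recognise $\rho_\mu(2)$ as the central critical twist of the Galois representation $\rho_{f}$ attached to the weight $2k-2$ newform $f=f_{z_0}$. Concretely, I expect $\rho_\mu(2)\simeq\rho_f(k-1)$ up to the paper's sign conventions. This twist is self-dual, $\rho_\mu(2)^\vee(1)\simeq\rho_\mu(2)$, and its $L$-function satisfies $L(\rho_\mu(2),0)\doteq L(1/2,\mu)$ in the automorphic normalisation.

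Noncuspidality of $\pi$ gives $L(1/2,\mu)\neq0$, which is Schmidt's criterion recalled in \S\ref{s:Mok}. I will then invoke the known case of the Bloch--Kato conjecture in analytic rank zero for modular forms. Kato's Euler system, together with Kato's explicit reciprocity law, gives finiteness of the Selmer group of the central twist when $L(f,k-1)\neq0$. Rationally this says $H_f^1(\Q,\rho_f(k-1))=0$. The same result is in Kato, \emph{Ast\'erisque} 295, Thm.~14.2, for $V_f(j)$ with $j$ in the critical range.

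I will cite the vanishing in the form "$L(f,j)\neq0\Rightarrow H^1_f(\Q,V_f(j))=0$" for critical $j$. Since $p\nmid\mathrm{cond}(\mu)$, the local conditions at $p$ match the crystalline ones. The $S$-independence noted in \S\ref{s: p-adic Hodge} lets me ignore the choice of auxiliary primes. Extending scalars to $\overline{\Q}_p$ then finishes the argument.

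\emph{The main obstacle.} I expect the only delicate point to be bookkeeping. One must check that the twist $\rho_\mu(2)$ is precisely the central critical twist, with $L$-value $L(1/2,\mu)$ rather than some other critical value. This has to be verified against the paper's conventions: $\epsilon$ has Hodge--Tate weight $-1$, and $\widetilde\rho_\mu$ is normalised via $\mathrm{rec}(\mu\otimes|\cdot|^{-1/2})$.

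To check it, I will compute determinants. Since $\det\widetilde\rho_\mu$ is a power of $\epsilon$ with weights summing to $2k-5$, the representation $\rho_\mu(2)$ has weights summing to $-1$. That is the defining property of the self-dual centre. The functional-equation centre then corresponds to $s=1/2$ for $\mu$.

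A secondary concern is whether Kato's theorem needs a large-image or non-CM hypothesis on $f$. For non-CM forms the rational statement holds in general. For CM forms, the result of Rubin (and its extensions to higher weight) plays the same role, so I will cite both.
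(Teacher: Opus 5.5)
Your proposal is correct and follows the paper's proof. The paper uses Kummer theory for $H_f^1(\Q,\overline{\Q}_p(1))$. For the other group it identifies $\rho_\mu(2)=\rho_f(k-1)$ from $\rho_\mu=\rho_f(k-3)$, notes that $L(k-1,f)=L(1/2,\mu)\neq 0$ by noncuspidality, and applies Kato's Theorem~14.2(2) with $r=k-1$, exactly as you do.

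There is one small slip in your bookkeeping check. With the paper's conventions, $\widetilde\rho_\mu=\rho_\mu(1)$ has Hodge--Tate weights $\{2-k,k-1\}$, which sum to $1$, not $2k-5$; this is what gives the sum $-1$ for $\rho_\mu(2)$. Also, it is trivial nebentypus, not the weight sum alone, that pins down $\det\rho_\mu(2)=\epsilon$.
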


\begin{proof}
By Lemma~\ref{lem:SK-cohomological-structure}, $\mu$ corresponds to a newform $f\in S_{2k-2}(\Gamma_0(N))$ of trivial nebentypus, where $p\nmid N=\mathrm{cond}(\mu)$, and $\rho_\mu = \rho_f(k-3)$. Since $L(1/2,\mu)=L(1/2,\mu') \neq 0$ and $L(s-k+3/2, \mu)=L(s,f)$ by \cite[\S 1.3]{SU06}, we have $L(k-1,f)=L(1/2,\mu)\ne0$. Applying Kato's theorem \cite[Theorem~14.2(2)]{Kat04} with $r=k-1$ and $\chi = 1$ shows that $H^1_f(\Q,\rho_\mu(2))=H^1_f(\Q,\rho_f(k-1))=0$. The vanishing of $H_f^1(\Q, \overline{\Q}_p(1))$ follows by Kummer theory.
\end{proof}
\noindent
To ensure that the extensions arising from $T$ are unramified outside $p$, we impose a monodromy control condition \textup{(SK--P2)}.
\begin{defn}
    \label{def:SK-P2}
Let $S_0\subset S$ be the set of primes $\ell\neq p$ at which $\mu_\ell$ is special. For each $\ell\in S_0$, let $N_{\pi,\ell}$ denote the monodromy operator of $\mathrm{std} \circ\mathrm{rec}_{\mathrm{GT}}(\pi_{\ell})$, viewed in $M_4(\overline{\Q}_p)$ via $\iota_p^{-1}:\C \xrightarrow{\sim} \overline{\Q}_p$ (cf.\ \S{\ref{s: The non-archimedean Local Langlands correspondence}}).
A $p$-adic family $X(1)$ through $z_0$ satisfies condition \textup{(SK--P2)} if for all $z \in Z$:
\begin{enumerate}
    \item[(i)] for all $\ell\in S_0$, the monodromy operator $N_\ell(\rho_z)$ of $\WD(\rho_z|_{W_{\Q_\ell}})$ lies in the Zariski closure of the conjugacy class of $N_{\pi,\ell}$, i.e. $N_\ell(\rho_z) \in \overline{\mathrm{Ad}(\GL_4(\overline{\Q}_p))\cdot N_{\pi,\ell}}$;
    \item[(ii)] for all $\ell\in S\setminus(\{p\}\cup S_0)$, $N_\ell(\rho_z)$ is trivial.
\end{enumerate}
\end{defn}

\begin{rem}
\label{rem:SK-P2-viable}
Condition~\textup{(SK--P2)} is modelled on~\textup{(P2)}, together with its special case \textup{(P3)}, of \cite[Conjecture~6.8.1 (Rep$(m)$)]{BC09}. The continuity of monodromy in $p$-adic families, formalised for unitary groups in \cite[Prop.~7.8.19]{BC09}, shows that the generic Jordan type of the monodromy operator is constant along each irreducible component and that monodromy can only increase on Zariski closed subsets. In particular, a monodromy bound verified on a Zariski dense and accumulating set of classical points propagates to every point of the family, a key input in Proposition~\ref{prop:trivial-mon}.  

The $\GL_2{_{/\Q}}$ case illustrates the feasibility of $p$-adic families with trivial monodromy outside $p$: Knightly’s theorem for modular forms whose local factors are supercuspidal at every prime dividing the level \cite{Kni25} provide an infinite source of classical points with everywhere trivial monodromy. Since full local--global compatibility is known for all points $z \in X(1)$ that are cohomological and of general type by \cite[Thm.~3.1]{Mok14}, it should be possible to verify \textup{(SK--P2)} in practice.
\end{rem}

\begin{thm}
\label{thm: X is a point}
Let $z_0$ be a noncuspidal $\psi_i$-refined SK point satisfying \textup{(St)} with $i\in\{2,3\}$ and $v(\alpha(z_0))\neq k(z_0)-2$ when $i=3$. Let $X(1)$ be a $p$-adic family through $z_0$ satisfying condition \textup{(SK--P2)} and the Kisin property at $z_0$. Then $X(1)$ is a point.
\end{thm}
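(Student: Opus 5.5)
We argue by contradiction, following the strategy outlined in the introduction. Suppose $\dim X>0$. The first step is to pin down the reducibility type of the generic pseudocharacter $T_\eta$. By Proposition~\ref{prop: T_eta to T_z}, any reducibility type of $T_\eta$ is inherited by $T_{z_0}=\mathrm{tr}(\epsilon^{-1}\oplus\epsilon^{-2}\oplus\rho_\mu)$, which is precisely $(2,1,1)$-reducible. Hence $T_\eta$ is one of the types $(4)$, $(3,1)$, $(2,2)$ or $(2,1,1)$; types $(3,1)$ and $(2,2)$ are only allowed if compatible with specialisation to $z_0$. Proposition~\ref{prop: T-not-(2,1,1)} excludes $(2,1,1)$. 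In every remaining case, some constituent of $\rho^{\mathrm{gen}}$ specialises to a reducible sum that mixes two of the blocks $\epsilon^{-1},\epsilon^{-2},\rho_\mu$.

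Next we work over the local ring $A=\mathcal{O}_{z_0}$, possibly after a finite extension of scalars. Since $T_A$ is residually multiplicity-free (Remark~\ref{rem:res-mult-free-z}), \cite[Thm.~1.4.4]{BC09} gives a GMA structure of type $(2,1,1)$ on $A[G_{\Q,S}]/\ker T$. By \cite[Thm.~1.5.5]{BC09}, the failure of generic $(2,1,1)$-reducibility yields a nonzero off-diagonal ideal. Thus we obtain a nontrivial extension between two of the residual constituents, realised on a torsion-free $A$-lattice $M$, which is a realisation in the sense of Definition~\ref{def:realisation}. The duality $T=T^\tau$ with $\psi=\epsilon^{-3}$ (Lemma~\ref{lem: pseudo involution}, Remark~\ref{rem: tau-basechange}) exchanges $\epsilon^{-1}\leftrightarrow\epsilon^{-2}$ and fixes $\rho_\mu$, since $\rho_\mu$ is $\epsilon^{-3}$-self-dual. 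After dualising and twisting, we may therefore assume the extension is a nonsplit class in $\mathrm{Ext}_T(\epsilon^{-2},\epsilon^{-1})$ or $\mathrm{Ext}_T(\epsilon^{-2},\rho_\mu)$. These correspond to classes in $H^1(G_{\Q,S},\overline{\Q}_p(1))$ and $H^1(G_{\Q,S},\rho_\mu(2))$ respectively.

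The heart of the proof is showing that these classes lie in $H^1_f$, and I expect this to be the main obstacle.

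\emph{At $p$.} We use the Kisin property at $z_0$. For $\psi_2$ and $\psi_3$, $F_1(z_0)p^{\kappa_1(z_0)}$ is the eigenvalue $p^{2}$ of $\epsilon^{-2}$, up to normalisation of weights. The hypothesis on $v(\alpha(z_0))$, together with Remark~\ref{rem: eval conj} (the eigenvalues of $\rho_\mu$ differ from $p,p^2$), gives $\dim\Dcris(M_{z_0}^{\mathrm{ss}}(\kappa_1))^{\varphi=F_1}\le1$. The Kisin property then forces the $\varphi=p^2$ eigenline in $\Dcris$ of the extension. The delicate point is arranging that the extension is constructed with $\epsilon^{-2}$ as the subobject or quotient, on the correct side, so that this eigenline being crystalline forces the whole two-step extension to be crystalline. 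This is a standard dimension count: the extension is de Rham with a crystalline line of the right slope, so the connecting class lies in $H^1_f$. The sidedness is what I would check most carefully.

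\emph{Away from $p$.} For $\ell\in S\setminus(\{p\}\cup S_0)$, condition (SK--P2)(ii) at all $z\in Z$ propagates by Zariski density (Remark~\ref{rem:SK-P2-viable}) to trivial monodromy on $M$. Hence the extension has finite image on inertia, and the class is unramified up to finite order, which suffices over $\overline{\Q}_p$. For $\ell\in S_0$, (SK--P2)(i) bounds the monodromy of $M$ by $N_{\pi,\ell}$. Nongenericity of $\pi_\ell$ means $N_{\pi,\ell}$ has rank one and is supported inside the $\rho_\mu$-block. This rules out any extra monodromy linking $\epsilon^{-2}$ to the other constituents, so the class lies in $H^1_{\mathrm{ur}}$, or in $H^1_f(\Q_\ell,\cdot)=H^1(\Q_\ell,\cdot)$ where this group vanishes. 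Condition (St) enters here: it ensures $\rho_\mu(2)|_{G_{\Q_\ell}}$ has no $\epsilon$-twist of the trivial character as a quotient (cf.\ Lemma~\ref{lem:no-epsd}), so $H^1(\Q_\ell,\rho_\mu(2))=H^1_f(\Q_\ell,\rho_\mu(2))$ by local Euler characteristic and Tate duality.

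Finally, Lemma~\ref{lem:Selmer-vanishing} gives $H^1_f(\Q,\rho_\mu(2))=H^1_f(\Q,\overline{\Q}_p(1))=0$. This contradicts the nontriviality of the extension, so $\dim X=0$. Since $X$ is irreducible, $X(1)$ is a point.
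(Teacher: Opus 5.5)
Your outline follows the paper's overall route. There is a genuine gap, however, where you pass from ``some nonzero off-diagonal piece of the GMA'' to ``a nonsplit class in $\Ext_T(\epsilon^{-2},\epsilon^{-1})$ or $\Ext_T(\epsilon^{-2},\rho_\mu)$''. The involution $\tau$ sends $\Ext_T(i,j)$ to $\Ext_T(j^\tau,i^\tau)$, with $(\epsilon^{-1})^\tau=\epsilon^{-2}$ and $\rho_\mu^\tau=\rho_\mu$. So $\tau$:
\begin{itemize}
\item fixes $\Ext_T(\epsilon^{-2},\epsilon^{-1})$ and $\Ext_T(\epsilon^{-1},\epsilon^{-2})$ individually;
\item pairs $\Ext_T(\epsilon^{-2},\rho_\mu)$ with $\Ext_T(\rho_\mu,\epsilon^{-1})$;
\item pairs $\Ext_T(\rho_\mu,\epsilon^{-2})$ with $\Ext_T(\epsilon^{-1},\rho_\mu)$.
\end{itemize}
Twisting never changes which constituent is the quotient. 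Hence a nonzero class in $\Ext_T(\epsilon^{-1},\epsilon^{-2})$ or $\Ext_T(\rho_\mu,\epsilon^{-2})$ cannot be moved to the good side. Such classes live in $H^1(G_{\Q,S},\overline{\Q}_p(-1))$ and $H^1(G_{\Q,S},\rho_\mu(1))$, which your argument does not control; the first is even nonzero by the global Euler characteristic formula. The Kisin property also gives no crystallinity for them, because the $\varphi=p^2$ line it produces is the one already coming from the subobject $\epsilon^{-2}$. So the sidedness issue you flagged is real, and duality does not resolve it.

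The missing input is that the reducibility ideal of the finest partition is $\mathcal{I}_{\mathcal{P}}=\sum_{i\neq j}\mathcal{A}_{i,j}\mathcal{A}_{j,i}$ \cite[Prop.~1.5.1]{BC09}. One must therefore argue by contraposition, as the paper does via \cite[Lem.~8.3.2]{BC09} in Lemma~\ref{lem: ext nonzero}. Label $\rho_\mu,\epsilon^{-2},\epsilon^{-1}$ as $1,2,3$, and use the convention in which $\Ext_T(\rho_j,\rho_i)$ is dual to $\mathcal{A}_{i,j}/(\mathcal{A}_{i,k}\mathcal{A}_{k,j}+\mathfrak{m}\mathcal{A}_{i,j})$, with $k$ the third index \cite[Thm.~1.5.5]{BC09}. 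Suppose $\Ext_T(\epsilon^{-2},\epsilon^{-1})$, $\Ext_T(\epsilon^{-2},\rho_\mu)$ and its $\tau$-dual $\Ext_T(\rho_\mu,\epsilon^{-1})$ all vanish. Then Nakayama gives $\mathcal{A}_{3,2}=\mathcal{A}_{3,1}\mathcal{A}_{1,2}$, $\mathcal{A}_{1,2}=\mathcal{A}_{1,3}\mathcal{A}_{3,2}$ and $\mathcal{A}_{3,1}=\mathcal{A}_{3,2}\mathcal{A}_{2,1}$. Hence $\mathcal{A}_{3,2}=(\mathcal{A}_{3,1}\mathcal{A}_{1,3})\mathcal{A}_{3,2}\subset\mathfrak{m}\mathcal{A}_{3,2}$, since $\mathcal{A}_{3,1}\mathcal{A}_{1,3}\subset\mathfrak{m}$ by residual multiplicity-freeness. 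It follows that $\mathcal{A}_{3,2}=\mathcal{A}_{1,2}=\mathcal{A}_{3,1}=0$ and $\mathcal{I}_{\mathcal{P}}=0$. That is, $T$ is $(2,1,1)$-reducible over $\mathcal{O}_{z_0}$, contradicting Proposition~\ref{prop: T-not-(2,1,1)}.

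With this in place, the rest of your outline matches the paper, up to one further point at $\ell\in S_0$. For the $(\epsilon^{-2},\epsilon^{-1})$ class, \textup{(SK--P2)(i)} only bounds the total rank of the generic monodromy $N_\ell(\rho_K)$ by one. To conclude that its component on the $\epsilon$-block vanishes, you need $e_\ell=e_{\epsilon^{-2}}+e_{\epsilon^{-1}}$ to be central in $R_\ell$ (Lemma~\ref{lem:central-idempotent-Rl}), so that ranks add across the two blocks. That centrality is where \textup{(St)} and Lemma~\ref{lem:no-epsd} are genuinely needed, not in computing $H^1(\Q_\ell,\rho_\mu(2))$. Likewise, the hypothesis $v(\alpha(z_0))\neq k(z_0)-2$ enters only through Proposition~\ref{prop: T-not-(2,1,1)}, not through the dimension count for the Kisin property.
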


\noindent
We now prove Theorem~\ref{thm: X is a point}, following \cite[\S\S8.2–8.3]{BC09}. Let $A=\mathcal{O}_{z_0}$ be the rigid local ring of $X$ at $z_0$, with maximal ideal $\mathfrak{m}$, residue field $k = A/ \mathfrak{m}$, and fraction field $K=\mathrm{Frac}(A)$. The pseudocharacter $T\colon G_{\Q,S}\to\mathcal{O}(X)$ induces a continuous pseudocharacter $T\otimes_{\mathcal{O}(X)}A\colon G_{\Q,S}\to A$, which we again denote by $T$. It defines a two-sided ideal
\[
  \ker T = \{x\in A[G_{\Q,S}] \mid T(xy)=0\ \text{for all } y\}.
\]
Since $\rho_{z_0}=\rho_\mu\oplus\epsilon^{-2}\oplus\epsilon^{-1}$ is residually multiplicity-free, \cite[Thm.~1.4.4]{BC09} implies that the Cayley--Hamilton quotient $R:=A[G_{\Q,S}]/\ker T$ is a GMA over $A$, torsion-free and of finite type. In particular, we may choose idempotents $e_{\rho_\mu}, e_{\epsilon^{-2}}, e_{\epsilon^{-1}} \in R$ corresponding to the irreducible constituents of $\rho_{z_0}$, together with a representation $R\otimes_A K\to M_4(K)$ adapted to these idempotents, as in \cite[Thm.~1.4.4]{BC09}. 

Let $\rho_K\colon G_{\Q,S}\to\GL_4(K)$ be the induced representation; it has trace $T$ and kernel $\ker T$. Since $\rho_K$ is semisimple by \cite[Lem.~4.3.9(i)]{BC09} and a direct sum of absolutely irreducible $K$-representations, applying \cite[Lem.~4.3.7, Prop.~7.8.14]{BC09} to each irreducible constituent shows that for each $\ell \neq p$, $\rho_K \vert_{G_{\Q_\ell}}$ admits an associated Weil--Deligne representation $(r_\ell, N_\ell(\rho_K))$. Moreover, it follows from the proof of \cite[Prop.~7.8.14]{BC09} that $N_\ell(\rho_K)$ is in $R_\ell$, where $R_\ell \subset R$ is the image of $A[G_{\Q_\ell}]$ in $R$ via the map $A[G_{\Q, S}] \to R$. Applying Taylor’s theorem \cite[Thm.~1.2]{Tay91} to $T\colon G_{\Q,S}\to A$, there exists a a unique (up to isomorphism) semisimple representation $\rho_{z_0}^{\mathrm{gen}}\colon G_{\Q,S}\to\GL_4(\overline{K})$ with trace $T\otimes \overline{K}\colon G_{\Q,S}\to \overline{K}$. Since $\rho_{z_0}^{\mathrm{gen}}$ and $\rho_K$ have the same trace, semisimplicity and the Brauer--Nesbitt theorem imply that $\rho_{z_0}^{\mathrm{gen}}\simeq\rho_K\otimes_K\overline{K}$.

\begin{defn}
\label{def: Ext T for family}
Let $\mathcal{I}=\{\epsilon^{-2},\epsilon^{-1},\rho_\mu\}$ be the set of irreducible constituents of $\rho_{z_0}$.  For $i,j\in\mathcal{I}$, define
\[
  \Ext_T(i,j) := \Ext^1_{R\otimes_A k}(i,j) =\Ext^1_R(i,j),
\]
where the equality follows from \cite[Rem.~1.5.9]{BC09}.
\end{defn}  
\noindent
In particular, elements of $\Ext_T(i,j)$ may be viewed as extension classes inside the GMA $R$, and hence correspond to Galois extensions realised in subquotients of $R$-modules. Since $R$ is finite type over $A$, $R\otimes_A k$ is a finite-dimensional $k$-algebra. By \cite[Thm.~1.5.5]{BC09}, each $\Ext_T(i,j)$ is therefore a finite-dimensional $k$-vector space. By \cite[Lem.~8.2.7]{BC09}, the image of the natural $k$-linear injection
\begin{equation*}
    \Ext_T(i,j)\hookrightarrow \Ext^1_{k[G_{\Q,S}]}(i,j)
\end{equation*}
is contained in the subspace of continuous extensions of $i$ by $j$ as $k[G_{\Q,S}]$-representations.

The strategy of the argument is as follows. Any $U \in \Ext_T(\epsilon^{-2},\epsilon^{-1})$ or $\Ext_T(\epsilon^{-2},\rho_\mu)$ arises as a subquotient of a realisation of $T$ and is crystalline at $p$ and unramified outside $p$ (Propositions~\ref{prop: nontriv subquots} and~\ref{prop:trivial-mon}). The non-$(2,1,1)$-reducibility of $T_\eta$ (Proposition~\ref{prop: T-not-(2,1,1)}) forces the total reducibility ideal $\mathcal{I}_\mathcal{P}$, the reducibility ideal attached to the finest partition $\mathcal{P}$ of $\mathcal{I}$, to be nonzero, so one of these two extension groups must be nontrivial. Twisting by $\epsilon^{2}$ and applying Selmer vanishing results (Lemma~\ref{lem:Selmer-vanishing}) then yields a contradiction. 

\begin{lemma}
\label{lem:no-epsd}
Let $f \in S_{2k-2}(\Gamma_0(N))$ be a holomorphic newform with trivial nebentypus, where $2k-2 \in \Z_{\ge 2}$, and let $\mu$ be the trivial central character twist of the associated automorphic $\GL_2(\A_\Q)$-representation. Let $\ell\neq p$ and $V=\rho_\mu\vert_{G_{\Q_\ell}}$. If $d \in \Z$, then
\[
  \Ext^1_{L[G_{\Q_\ell}]}\bigl(\epsilon^d,V\bigr)
  \;=\;
  \Ext^1_{L[G_{\Q_\ell}]}\bigl(V,\epsilon^d\bigr)
  \;=\; 0
\]
unless $\mu_\ell$ is an unramified twist of Steinberg and $(w_\ell,d)$ lies in $\{(-1,-1),(-1,-3)\}$ or $\{(-1,-2),(1,0)\}$ respectively, where $w_\ell \in\{\pm1\}$ is the Atkin--Lehner eigenvalue of $f$ at $\ell$ and $L$ is the field of definition of $\rho_\mu$. 
\end{lemma}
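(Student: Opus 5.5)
The plan is to translate both $\Ext$ groups into local Galois cohomology at $\ell\neq p$. There the Euler characteristic formula and local Tate duality reduce everything to computing $H^0$, and $H^0$ is governed by the local Langlands description of $\mu_\ell$ together with purity.

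\emph{Reduction to $H^0$.} Since $\epsilon^d$ is a character, I would write
\[
\Ext^1_{L[G_{\Q_\ell}]}(\epsilon^d,V)=H^1(\Q_\ell,V(-d)),\qquad \Ext^1_{L[G_{\Q_\ell}]}(V,\epsilon^d)=H^1(\Q_\ell,V^\vee(d)).
\]
For $\ell\neq p$ the local Euler characteristic is $0$, and local Tate duality gives, for any finite-dimensional $W$,
\[
\dim H^1(\Q_\ell,W)=\dim H^0(\Q_\ell,W)+\dim H^0(\Q_\ell,W^\vee(1)).
\]
By Lemma~\ref{lem:SK-cohomological-structure}, $\rho_\mu$ is the two-dimensional summand of the $\epsilon^{-3}$-self-dual representation $\rho_{z_0}$ and has $\det\rho_\mu=\epsilon^{-3}$. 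Hence $V^\vee\simeq V(3)$. Both groups therefore reduce to the question of whether $V(m)$ has nonzero $G_{\Q_\ell}$-invariants, for the specific values
\[
m\in\{-d,\ d+4\}\ \text{for the first group},\qquad m\in\{d+3,\ -d-2\}\ \text{for the second}.
\]

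\emph{Case analysis on $\mu_\ell$.} The compatibility $\widetilde\rho_{\mu}|_{G_{\Q_\ell}}\simeq \mathrm{rec}(\mu_\ell\otimes|\cdot|^{-1/2})$ from Lemma~\ref{lem:SK-cohomological-structure} determines $\WD(V)$, and I would split into three cases.

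If $\mu_\ell$ is supercuspidal, or a principal series with ramified constituents, then $V$ has no nonzero inertia-invariant vectors on which Frobenius acts by an unramified character. Since the central character is trivial, both constituents of a ramified principal series are ramified, and a supercuspidal $V$ is irreducible of dimension $2$. So $H^0(V(m))=0$ for all $m$.

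If $\mu_\ell$ is unramified, then $V$ is unramified and its Frobenius eigenvalues are Weil numbers of weight $3$, i.e.\ of absolute value $\ell^{3/2}$, by Deligne's purity (Ramanujan--Petersson for $f$). A twist $\epsilon^{m}$ has Frobenius eigenvalue of integral weight $-2m$, so no eigenvalue of $V$ can match it and $H^0(V(m))=0$.

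If $\mu_\ell\simeq \mathrm{St}\otimes\xi$ with $\xi$ unramified and $\xi^2=1$, then $\WD(V)=\mathrm{Sp}(2)\otimes\xi'$ for an explicit unramified twist $\xi'$. Here $V^{I_\ell}$ is a line, on which Frobenius acts by $\xi(\ell)\ell^{a}$ for a specific integer $a$. The sign $\xi(\ell)=-w_\ell$ is the standard relation between the Atkin--Lehner eigenvalue and the Steinberg twist. Then $H^0(V(m))\neq0$ exactly when this line is fixed after twisting, i.e.\ when $\xi(\ell)=1$ and $\ell^{a}$ matches the corresponding power of $\ell$ from $\epsilon^{m}$. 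Solving this for the two values of $m$ attached to each $\Ext$ group should produce exactly the listed exceptional pairs $(w_\ell,d)$: $\{(-1,-1),(-1,-3)\}$ for $\Ext^1(\epsilon^d,V)$ and $\{(-1,-2),(1,0)\}$ for $\Ext^1(V,\epsilon^d)$.

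\emph{Main obstacle.} The main obstacle is the bookkeeping in the Steinberg case. One has to pin down the normalisations correctly: the $|\cdot|^{-1/2}$ shift, the twist by $(-1)$ in $\rho_\mu=\widetilde\rho_\mu(-1)$, the geometric-Frobenius convention, and the sign relation between $\xi(\ell)$ and $w_\ell$. Only with these fixed do the resulting pairs come out exactly as stated. I would first fix the Frobenius eigenvalue on $V^{I_\ell}$ by comparing with the Tate-module description of an elliptic curve with multiplicative reduction when $k=2$, and then generalise.
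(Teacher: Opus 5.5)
Your reduction is the same as the paper's: local Euler characteristic and Tate duality, $V^{I_\ell}$ read off from local--global compatibility, and the Steinberg case as the only source of invariants. For the first group it works. The values $m\in\{-d,d+4\}$ together with your criterion ($H^0(\Q_\ell,V(m))\neq 0$ iff $\xi(\ell)=1$ and $m=1$) give exactly $\{(-1,-1),(-1,-3)\}$.

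The step you defer, that the bookkeeping ``should produce exactly the listed pairs'', is where the argument fails, in two ways.

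\emph{Wrong twist.} Your second value of $m$ for $\Ext^1(V,\epsilon^d)$ is incorrect. With $W=V^\vee(d)\simeq V(d+3)$ one gets $W^\vee(1)\simeq V^\vee(-d-2)\simeq V(1-d)$. So $m=1-d$, not $-d-2$; you did not convert $V^\vee$ back into $V(3)$. Your value would give $\{(-1,-2),(-1,-3)\}$.

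\emph{The stated pair is unreachable.} Even with $m=1-d$ you get $\{(-1,-2),(-1,0)\}$, not $\{(-1,-2),(1,0)\}$, and no choice of normalisations changes this. In the Steinberg case $V$ is the nonsplit extension of $\xi\epsilon^{-2}$ by $\xi\epsilon^{-1}$, with $\xi(\ell)=-w_\ell$. Each of the four nonvanishing conditions forces $\xi=1$, i.e.\ $w_\ell=-1$. These conditions are: the trivial character is a sub of $V(-d)$ or of $V^\vee(d)$, or $\epsilon$ is a quotient of $V(-d)$ or of $V^\vee(d)$. So a pair with $w_\ell=1$ can never occur.

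In fact the statement as written is false at $(w_\ell,d)=(-1,0)$. Take $\mu_\ell=\mathrm{St}$. Then $V^\vee$ has sub $\epsilon^{2}$, so $H^0(\Q_\ell,V^\vee)=0$. Meanwhile $V(1)$ contains the trivial character, so
$\dim\Ext^1_{L[G_{\Q_\ell}]}(V,\epsilon^0)=\dim H^0(\Q_\ell,V^\vee)+\dim H^0(\Q_\ell,V(1))=0+1=1$,
although $(-1,0)$ is not in the listed set. Conversely, for $w_\ell=1$ and $d=0$ this group vanishes.

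What your method (and the paper's) actually proves is the lemma with $(1,0)$ replaced by $(-1,0)$. The paper's proof has the corresponding sign slip: it writes the Steinberg extension with $\mathrm{ur}(w_\ell)$, although its own characteristic polynomial $1+w_\ell\ell X$ forces $\mathrm{ur}(-w_\ell)$. The correction is harmless downstream. Only $d\in\{-1,-2\}$ are used later, and under (St) one has $w_\ell=1$, where all four groups vanish.

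Finally, your case list omits a ramified quadratic twist of Steinberg. There $V^{I_\ell}=0$, so it is handled by the same argument as the supercuspidal case.
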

\begin{proof}

By Carayol’s proof of the compatibility of $\rho_f$ with the local Langlands correspondence for $\GL_2$ \cite{Car86}, the inverse characteristic polynomial $P(V,X) :=  \det(1 - X \cdot \Frob_\ell \mid V^{I_\ell})$ is given by 
\[P(V,X) =
\begin{cases}
  (1 - \alpha_\ell \ell^{-(k-3)} X)(1 - \beta_\ell \ell^{-(k-3)} X),
  & \text{if }\mu_\ell\text{ is unramified}\\[4pt]
  1 + w_\ell \ell X , & \text{if }\mu_\ell\text{ is an unramified twist of Steinberg}\\[4pt]
  1, & \text{otherwise }
\end{cases}
\]
with $|\alpha_\ell|_\C = |\beta_\ell|_\C =\ell^{k-3/2}$ and $w_\ell =\pm 1$ the Atkin--Lehner eigenvalue of $f$ at $\ell$. Hence $\ell^{-d}$ is a Frobenius eigenvalue on $V^{I_\ell}$ only in the second case, and then precisely when $(w_\ell,d)=(-1,-1)$. Hence this is the only case for which $\epsilon^{d}$ can occur as a subrepresentation of $V$. From the computations above and the local Euler characteristic formula, if $W= V(-d)$ or $V^\star(d)$, then $H^1(G_{\Q_\ell},W)$ can only be nonzero if $\mu_\ell$ is an unramified twist of Steinberg, in which case $V$ fits into a non-split extension
\[
  0 \longrightarrow \mathrm{ur}(w_\ell)\,\epsilon^{-1} \longrightarrow V
  \longrightarrow \mathrm{ur}(w_\ell) \epsilon^{-2} \longrightarrow 0,
\]
where $\mathrm{ur}(w_\ell)$ is the unramified character sending $\Frob_\ell$ to $w_\ell\in\{\pm1\}$. The desired statement follows from the identifications $\Ext^1_{L[G_{\Q_\ell}]}\bigl(\epsilon^d,V\bigr)= H^1(G_{\Q_\ell},V(-d))$  and $  \Ext^1_{L[G_{\Q_\ell}]}\bigl(V,\epsilon^d\bigr)$ $= H^1(G_{\Q_\ell}, V^\star(d))$.
\end{proof}

\begin{lemma}[{\cite[Lem.~8.2.12]{BC09}}]
\label{lem:central-idempotent-Rl}
For each $\ell\neq p$ there is a datum of idempotents $\{e_{\rho_\mu},e_{\epsilon^{-2}},e_{\epsilon^{-1}}\}$ for the GMA $R$ such that $e_\ell \;:=\; e_{\epsilon^{-2}} + e_{\epsilon^{-1}}$ is in the centre of $R_\ell$, where $R_\ell\subset R$ is the image of $A[G_{\Q_\ell}]$ in $R$.
\end{lemma}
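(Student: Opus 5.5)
The plan is to follow the proof of \cite[Lem.~8.2.12]{BC09}, adapted to the residual decomposition $\rho_{z_0}=\rho_\mu\oplus\epsilon^{-2}\oplus\epsilon^{-1}$. Consider the partition $\mathcal{P}_\ell=\{\{\rho_\mu\},\{\epsilon^{-2},\epsilon^{-1}\}\}$ of $\mathcal{I}$ and the restriction $T|_{G_{\Q_\ell}}\colon A[G_{\Q_\ell}]\to A$. The goal is to show that the off-diagonal part of $R_\ell$ with respect to $\mathcal{P}_\ell$ can be killed, i.e.\ that $e_\ell R_\ell e_{\rho_\mu}=e_{\rho_\mu}R_\ell e_\ell=0$ for a suitable choice of idempotents. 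Since $e_\ell+e_{\rho_\mu}=1$, this is exactly the centrality of $e_\ell$ in $R_\ell$.

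\textbf{Step 1: local GMA structure.} Work in the GMA $R$ over the henselian local ring $A$, with idempotents $e_{\rho_\mu},e_{\epsilon^{-2}},e_{\epsilon^{-1}}$ and $R=\bigoplus_{i,j}e_iRe_j$. Since $A$ is noetherian and $R$ is finite over $A$, the subalgebra $R_\ell$ is a finite $A$-algebra. Its pieces $e_iR_\ell e_j$ give $A$-modules $B_{i,j}^\ell\subset A_{i,j}$ in the notation of \cite[\S1.3]{BC09}. Reducing modulo $\mathfrak m$, the spaces $\Hom_k(B^\ell_{i,j}/\mathfrak m,k)$ embed, exactly as in \cite[Thm.~1.5.5]{BC09} and Definition~\ref{def: Ext T for family}, into local extension groups $\Ext^1_{k[G_{\Q_\ell}]}(j,i)$ for $i,j$ in different parts of $\mathcal{P}_\ell$. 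These local extension groups are the groups appearing in Lemma~\ref{lem:no-epsd}.

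\textbf{Step 2: local Ext vanishing.} Apply Lemma~\ref{lem:no-epsd} with $d\in\{-1,-2\}$ in both directions. The groups $\Ext^1(\epsilon^d,\rho_\mu|_{G_{\Q_\ell}})$ and $\Ext^1(\rho_\mu|_{G_{\Q_\ell}},\epsilon^d)$ vanish unless $\mu_\ell$ is an unramified twist of Steinberg. In that case the only nonzero possibilities are $(w_\ell,d)\in\{(-1,-1),(-1,-2)\}$, since $(1,0)$ and $(-1,-3)$ do not involve $d\in\{-1,-2\}$. Hypothesis (St) forces $w_\ell=1$, so all these groups vanish for every $\ell\neq p$. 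One subtlety must be handled here: when $w_\ell=1$, the local representation $\rho_\mu|_{G_{\Q_\ell}}$ is a nonsplit extension of $\epsilon^{-2}$ by $\epsilon^{-1}$, so it shares Jordan--H\"older factors with $\epsilon^{-1}\oplus\epsilon^{-2}$. For this reason the argument must use the GMA idempotents attached to the global constituents (which is what the Ext groups above measure), and not the block decomposition of $R_\ell\otimes k$.

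\textbf{Step 3: lifting to $A$.} Show that the reducibility ideal of $T|_{G_{\Q_\ell}}$ for $\mathcal{P}_\ell$ (\cite[Prop.~1.5.1]{BC09}) is zero. By Step 2, the products $B^\ell_{\rho_\mu,\epsilon^d}B^\ell_{\epsilon^d,\rho_\mu}$ lie in $\mathfrak m$ times themselves, and Nakayama's lemma should give that they vanish. I would then try to conjugate the idempotents by an element of $1+\mathfrak m R$, as in \cite[Lem.~8.2.12]{BC09}, so that the off-diagonal components of $R_\ell$ themselves vanish and not only their products. Concretely, the plan is to take the $A$-linear span of $e_\ell R_\ell e_{\rho_\mu}$ and use the vanishing of the reduction of its generators in $\Ext$ to construct inductively, modulo $\mathfrak m^n$, a correction of $e_\ell$; then pass to the limit, using that $A$ is henselian and that $R$ is finite and separated for the topology of Remark~\ref{rem: topology}.

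\textbf{Main obstacle.} I expect Step 3 to be the hardest part: passing from residual vanishing of local Ext to exact vanishing of $e_\ell R_\ell e_{\rho_\mu}$ over $A$ after changing idempotents. The first thing to try is to reproduce the Bella\"iche--Chenevier argument verbatim: choose $e_\ell$ as the image under a Hensel-lifted idempotent of the commutative subalgebra generated by $e_\ell R_\ell e_\ell$ and $e_{\rho_\mu}R_\ell e_{\rho_\mu}$, and use the local Ext vanishing of Step 2 to remove the off-diagonal terms.
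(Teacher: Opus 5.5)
Your plan (separate $\rho_\mu$ from $\{\epsilon^{-2},\epsilon^{-1}\}$ over $G_{\Q_\ell}$ using Lemma~\ref{lem:no-epsd} and \textup{(St)}, then lift using henselianity) is the route the paper takes when it invokes \cite[Lem.~8.2.11, Lem.~8.2.12]{BC09}. However, the ``subtlety'' in Step~2 is false, and it steers Step~3 away from the argument that works. From $P(V,X)=1+w_\ell\ell X$ one reads off $V^{I_\ell}=\mathrm{ur}(-w_\ell)\epsilon^{-1}$; the displayed extension in the proof of Lemma~\ref{lem:no-epsd} has a sign slip. So under \textup{(St)}, i.e.\ $w_\ell=1$, the Jordan--H\"older factors of $\rho_\mu|_{G_{\Q_\ell}}$ are $\mathrm{ur}(-1)\epsilon^{-1}$ and $\mathrm{ur}(-1)\epsilon^{-2}$, which are disjoint from $\epsilon^{-1},\epsilon^{-2}$.

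Your claim also contradicts the vanishing you have just invoked. If $\epsilon^{-1}\subset V$, then $H^0(G_{\Q_\ell},V(1))\neq 0$, and the local Euler characteristic formula forces $\Ext^1(\epsilon^{-1},V)\neq 0$. Worse, with a shared constituent the lemma would be false. A central idempotent $e_\ell\in R_\ell$ reduces to a central idempotent of $R_\ell/\mathfrak{m}R_\ell$, which is a quotient of $k[G_{\Q_\ell}]$. Such an idempotent acts by a single scalar on each isomorphism class of simple modules. Yet it must act by $1$ on $\epsilon^{-1}$ and by $0$ on $\rho_\mu$, hence on every $G_{\Q_\ell}$-subquotient of $\rho_\mu$. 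Disjointness of the local constituents is exactly what \textup{(St)} is for, and the residual block decomposition you propose to avoid is the proof.

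Step~3 also does not work as written, for four reasons.
\begin{enumerate}
\item The idempotents $e_i$ need not lie in $R_\ell$, so $e_iR_\ell e_j\not\subset R_\ell$ and $R_\ell$ is not a GMA for them.
\item \cite[Thm.~1.5.5]{BC09} is a statement about the pieces of $R$ itself and gives no embedding of the kind you describe.
\item \cite[Prop.~1.5.1]{BC09} does not apply to $T|_{G_{\Q_\ell}}$ with $\mathcal{P}_\ell$, since $\rho_\mu|_{G_{\Q_\ell}}$ is in general reducible. A trace-level statement would not split the algebra anyway.
\item The condition $e_\ell R_\ell(1-e_\ell)=(1-e_\ell)R_\ell e_\ell=0$ only says that $e_\ell$ commutes with $R_\ell$, not that it lies in $R_\ell$.
\end{enumerate}

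What is needed is the following. The simple modules of $\overline{R}_\ell:=R_\ell/\mathfrak{m}R_\ell$ are the Jordan--H\"older factors of $\rho_{z_0}|_{G_{\Q_\ell}}$. Those coming from $\rho_\mu$ and those coming from $\epsilon^{-1}\oplus\epsilon^{-2}$ are disjoint, and there is no $\Ext^1_{k[G_{\Q_\ell}]}$ between the two sets:
\begin{itemize}
\item if $\rho_\mu|_{G_{\Q_\ell}}$ is irreducible, this is Lemma~\ref{lem:no-epsd};
\item otherwise all factors are characters, and each relevant ratio differs from $1$ and $\epsilon$: by \textup{(St)} in the Steinberg case, by Ramanujan in the unramified case, and by ramification otherwise.
\end{itemize}
So the two sets lie in different blocks and yield a central idempotent $\bar f\in\overline{R}_\ell$. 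Lift it to an idempotent $f\in R_\ell$, which is possible because $R_\ell$ is finite over the henselian ring $A$. Then $fR_\ell(1-f)=\mathfrak{m}\,fR_\ell(1-f)$, and likewise for $(1-f)R_\ell f$, so both vanish by Nakayama and $f$ is central in $R_\ell$. This is where Nakayama genuinely applies. Finally, $f$ and $e:=e_{\epsilon^{-2}}+e_{\epsilon^{-1}}$ have the same image in $R/\mathrm{rad}(R)$. Hence $u:=ef+(1-e)(1-f)\in 1+\mathrm{rad}(R)$ is a unit with $u^{-1}eu=f$, and conjugating the datum of idempotents by $u$ gives the lemma.
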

\begin{proof}
A simple modification of \cite[Lem.~8.2.12]{BC09}, noting that Lemma~\ref{lem:no-epsd} and assumption \textup{(St)} allows us to apply \cite[Lem.~8.2.11]{BC09}.
\end{proof}

\begin{prop}
\label{prop:trivial-mon}
For each prime $\ell\neq p$, and for $(i, j) \in \mathcal{I}'= \{(\epsilon^{-2},\epsilon^{-1}), (\epsilon^{-2},\rho_\mu)\}$ every extension in $\Ext_T(i, j)$ is split when restricted to $I_\ell$.
\end{prop}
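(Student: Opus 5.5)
The plan is to handle primes $\ell\notin S$ trivially and, for $\ell\in S\setminus\{p\}$, combine the central idempotent $e_\ell$ of Lemma~\ref{lem:central-idempotent-Rl} with the monodromy control \textup{(SK--P2)}, following the pattern of \cite[\S8.2]{BC09}. If $\ell\notin S$, every $R$-module factors through $G_{\Q,S}$, so $I_\ell$ acts trivially and there is nothing to prove. Now fix $\ell\in S$, $\ell\neq p$, and choose idempotents $\{e_{\rho_\mu},e_{\epsilon^{-2}},e_{\epsilon^{-1}}\}$ as in Lemma~\ref{lem:central-idempotent-Rl}, so that $e_\ell=e_{\epsilon^{-2}}+e_{\epsilon^{-1}}$ is central in $R_\ell$. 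An element of $\Ext_T(i,j)$ is realised inside the GMA $R$ through the off-diagonal block $e_j R e_i$ modulo $\mathfrak m$. Its restriction to $G_{\Q_\ell}$ is therefore governed by the image of $A[G_{\Q_\ell}]$ in that block, namely $e_jR_\ell e_i$.

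\textbf{The pair $(\epsilon^{-2},\rho_\mu)$.} Since $e_\ell$ commutes with $R_\ell$ and $e_\ell e_{\rho_\mu}=0$, we get $e_{\rho_\mu}R_\ell e_{\epsilon^{-2}}=e_{\rho_\mu}R_\ell e_\ell e_{\epsilon^{-2}}=e_{\rho_\mu}e_\ell R_\ell e_{\epsilon^{-2}}=0$. Hence $G_{\Q_\ell}$ does not mix the $\rho_\mu$-block with the $\epsilon^{-2}$-block. Any extension of $\epsilon^{-2}$ by $\rho_\mu$ coming from $R$ is therefore split on all of $G_{\Q_\ell}$, and in particular on $I_\ell$. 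This step uses Lemma~\ref{lem:no-epsd} and \textup{(St)}, both already built into Lemma~\ref{lem:central-idempotent-Rl}.

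\textbf{The pair $(\epsilon^{-2},\epsilon^{-1})$.} Both characters lie in the $e_\ell$-block, so here the monodromy has to be used. By Grothendieck's monodromy theorem, as recalled in the paragraph before Definition~\ref{def: Ext T for family}, there are a Weil--Deligne representation $(r_\ell,N_\ell(\rho_K))$ with $N:=N_\ell(\rho_K)\in R_\ell$, and an open subgroup $I'\subset I_\ell$ acting through $\exp(t_\ell(\cdot)N)$. Since $e_\ell$ is central in $R_\ell$, we can write $N=e_\ell Ne_\ell+(1-e_\ell)N(1-e_\ell)$, and the goal is to show $e_\ell Ne_\ell=0$.
\begin{itemize}
\item If $\ell\notin S_0$, \textup{(SK--P2)(ii)} gives $N_\ell(\rho_z)=0$ for all $z\in Z$. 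Since $Z$ is Zariski dense and accumulates at $z_0$, and by the continuity of monodromy in families \cite[Prop.~7.8.19]{BC09}, this forces $N=0$.
\item If $\ell\in S_0$, the point $\pi_\ell$ is nongeneric and $\mu_\ell$ is special, so $N_{\pi,\ell}$ has rank one and lives in the $\rho_\mu$-constituent. Points of the Zariski closure of its conjugacy class have rank $\le1$. By \textup{(SK--P2)(i)} and density, $\mathrm{rank}\,N\le1$ generically.
\end{itemize}
In the second case, specialising the $(1-e_\ell)$-part at $z_0$ recovers the nonzero monodromy of $\rho_\mu|_{G_{\Q_\ell}}$. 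By lower semicontinuity of rank, $(1-e_\ell)N(1-e_\ell)$ has rank $\ge1$ over $K$. The bound $\mathrm{rank}\,N\le1$ then forces $e_\ell Ne_\ell=0$.

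It follows that $I'$ acts trivially on the $e_\ell$-block. Consequently $I_\ell$ acts on any extension of $\epsilon^{-2}$ by $\epsilon^{-1}$ arising from $R$ through a finite quotient. In characteristic zero such an action is semisimple, and it is trivial on both graded pieces because those characters are unramified at $\ell$. Hence $I_\ell$ acts trivially, and the extension splits on $I_\ell$.

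\textbf{Main obstacle.} I expect the hardest step to be the case $\ell\in S_0$. It requires making the rank comparison rigorous inside the GMA over the non-field $A$: passing from rank bounds at the points $z\in Z$ to a bound on $N$ over $K$, and checking that the nonvanishing of $(1-e_\ell)N(1-e_\ell)$ really follows from its specialisation at $z_0$. I would first try to prove this by working over $K$ with the semisimple $\rho_K$ and adapting the proof of \cite[Prop.~7.8.14]{BC09}.
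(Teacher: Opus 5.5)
Your proposal is correct and follows the paper's route. The cases match: $\ell\notin S$ is trivial; for $\ell\in S\setminus(S_0\cup\{p\})$ the generic monodromy vanishes by \textup{(SK--P2)(ii)} and \cite[Prop.~7.8.19]{BC09}; and for $\ell\in S_0$ a rank comparison shows $e_\ell N_\ell(\rho_K)=0$, which the paper phrases via the ordering $\prec$ and \cite[Prop.~7.5.8, Prop.~7.8.8]{BC09}. The only difference is for the pair $(\epsilon^{-2},\rho_\mu)$: you kill $e_{\rho_\mu}R_\ell e_{\epsilon^{-2}}$ using the centrality of $e_\ell$ for all $\ell$, whereas the paper applies the local vanishing of $\Ext^1_{k[G_{\Q_\ell}]}(\epsilon^{-2},\rho_\mu|_{G_{\Q_\ell}})$ from Lemma~\ref{lem:no-epsd} directly at $\ell\in S_0$; both rest on the same input.
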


\begin{proof} 
Fix a prime $\ell\neq p$ and let $U \in \Ext_T(i,j)$ for $(i, j) \in \mathcal{I}'$. Since representations of finite groups in characteristic zero are semisimple by Maschke's theorem, it suffices to show $I_\ell$ acts through a finite quotient on $U$. Moreover, by \cite[Thm.~1.5.5]{BC09}, if the image of $N_\ell(\rho_K)$ in $R \subset R \otimes K$ is trivial, then the action of $I_\ell$ on $V$ factors through a finite quotient, and hence $U|_{I_\ell}$ is split. 

\smallskip
\noindent\emph{Case 1: $\ell\notin S$.}
Since $U$ is a $G_{\Q,S}$-representation (as $T$ is a pseudocharacter of $G_{\Q,S}$), $I_\ell$ acts trivially on $U$, and hence $U|_{I_\ell}$ is split.

\smallskip
\noindent \emph{Case 2: $\ell\in S\setminus(S_0\cup\{p\})$.}
By \textup{(SK--P2)(ii)}, for all $z\in Z$ we have $N_\ell(\rho_z)=0$. Applying \cite[Prop.~7.8.19]{BC09} with $x=z_0$ and $W=X$ gives an affinoid open $\Omega\subset X$ of $z_0$ and a Zariski-open-and-dense subset $\Omega'\subset\Omega$ such that $N_\ell(\rho_y)\sim_{I_\ell}N_\ell(\rho_{z_0}^{\mathrm{gen}})$ for all $y\in\Omega'$. Since $Z$ is Zariski dense in $X$, $Z\cap\Omega'$ is nonempty, so there exists $z\in Z\cap\Omega'$ such that $0= N_\ell(\rho_z)\sim_{I_\ell}N_\ell(\rho_{z_0}^{\mathrm{gen}})$, which implies $N_\ell(\rho_{z_0}^{\mathrm{gen}})=0$. As $\rho_{z_0}^{\mathrm{gen}}\simeq\rho_K\otimes_K\overline{K}$, their associated Weil--Deligne representations have conjugate monodromy operators, so the image of $N_\ell(\rho_K)$ in $R \subset R\otimes_A K$ is trivial, and hence $U|_{I_\ell}$ is split. 

\smallskip
\noindent \emph{Case 3: $\ell\in S_0$.}
If $(i,j)=(\epsilon^{-2},\rho_\mu)$, then by Lemma~\ref{lem:no-epsd}, since $w_\ell \neq 1$ by \textup{(St)}, we have $\Ext^1_{k[G_{\Q_\ell}]}\bigl(\epsilon^{-2},\rho_\mu\vert_{G_{\Q_\ell}}\bigr)=0$, and so the restriction of $U$ to $G_{\Q_\ell}$, and hence to $I_\ell$, is split. Let $(i,j)=(\epsilon^{-2},\epsilon^{-1})$. Choose a datum of idempotents as in Lemma~\ref{lem:central-idempotent-Rl} and a representation $R\otimes_A K\to M_4(K)$ adapted to those idempotents. By \cite[Thm.~1.5.5, Thm.~1.5.6(1)]{BC09} it suffices to show that the image $e_\ell N_\ell(\rho_K)$ of $N_\ell(\rho_K)$ in $e_\ell R_\ell e_\ell = e_\ell R_\ell  \subset e_\ell R_K e_\ell$ is trivial. By \textup{(SK--P2)(i)}, for every $z\in Z$ the monodromy $N_\ell(\rho_z)$ lies in the Zariski closure of the conjugacy class of $N_{\pi,\ell}$, so by \cite[Prop.~7.5.8]{BC09}, there exists $z \in Z$ such that
$N_\ell(\rho_{z_0}) \prec N_\ell(\rho_{z_0}^\mathrm{gen})  \sim N_\ell(\rho_K) \sim N_\ell(\rho_{z}^\mathrm{gen}) \sim N_\ell(\rho_{z}) \prec N_{\pi,\ell}$. Writing $(1-e_\ell)N_\ell(\rho_{z_0})$ for the image of $N_\ell(\rho_{z_0})$ in $\End(\rho_\mu)$, we have $N_{\pi, \ell} \sim N_\ell(\rho_{z_0})\sim (1-e_\ell) N_\ell(\rho_{z_0})$. But $(1-e_\ell)N_\ell(\rho_{z_0}) \prec (1-e_\ell)N_\ell(\rho_K)$ by \cite[Prop.~7.8.8]{BC09} applied to $(1-e_\ell)R_\ell (1-e_\ell)$ and $n=(1-e_\ell)N_\ell(\rho_K)$, so we obtain
\begin{equation*} \tag{$\star$}
    N_\ell(\rho_K) \prec N_{\pi, \ell} \sim (1-e_\ell)N_\ell(\rho_{z_0}) \prec (1-e_\ell)N_\ell(\rho_K).
\end{equation*}
By \cite[Prop 7.8.1]{BC09}, if $k$ is a field and $N, N' \in M_4(k)$ are nilpotent matrices, then $N \prec N'$ if and only if $\mathrm{rank}(N^i) \leq \mathrm{rank}((N')^i)$ for all $i \geq 1$. Since $N_{\pi,\ell}$ has rank one, $(1-e_\ell)N_\ell(\rho_K)$ has rank at most one, and so $(1-e_\ell)N_\ell(\rho_K) \prec N_\ell(\rho_K)$. Together with $(\star)$ this gives $N_\ell(\rho_K) \sim (1-e_\ell)N_\ell(\rho_K)$, and so $e_\ell N_\ell(\rho_K)=0$.
\end{proof}

\begin{prop}
\label{prop: nontriv subquots}
Every extension in $\Ext_T(\epsilon^{-2},\rho)$, where $\rho\in\{\rho_\mu,\epsilon^{-1}\}$, is crystalline at $p$.
\end{prop}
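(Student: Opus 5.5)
We follow the strategy of \cite[\S8.2, Prop.~8.2.13 and Lem.~8.2.14]{BC09} (and Bella\"iche's \cite[\S6]{Bel10}): realise an arbitrary extension in $\Ext_T(\epsilon^{-2},\rho)$ inside a realisation of $T$ over $A=\mathcal{O}_{z_0}$, and use the Kisin property to force a crystalline period. Fix a nonzero class $U\in\Ext_T(\epsilon^{-2},\rho)$, viewed as a continuous $k[G_{\Q,S}]$-extension $0\to\rho\to U\to\epsilon^{-2}\to 0$. By \cite[Thm.~1.5.6, Lem.~8.2.7]{BC09} applied to the GMA $R$, there is a torsion-free finite $A$-module $M$ with continuous $G_{\Q,S}$-action, namely a suitable $R$-submodule of $R\otimes_A K$ adapted to the idempotents $e_{\rho_\mu},e_{\epsilon^{-2}},e_{\epsilon^{-1}}$. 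This $M$ is a realisation of $T$ over $A$ in the sense of Definition~\ref{def:realisation}, and $M_{z_0}=M\otimes_A k$ has $U$ as a quotient. We choose $M$ so that $\epsilon^{-2}$ occurs as the unique quotient-type constituent, i.e.\ $M_{z_0}$ surjects onto $U$ with kernel a sum of constituents different from $\epsilon^{-2}$.

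We then apply the Kisin property at $z_0$. For $z_0$ a $\psi_2$- or $\psi_3$-refined SK point with trivial central character, Table~\eqref{tab: 1} gives $F_1(z_0)=p^{k-1}$ and $\kappa_1(z_0)=3-k$. Hence $M_{z_0}(\kappa_1(z_0))$ is the twist in which the $\varphi=F_1(z_0)$ eigenvalue corresponds to crystalline Frobenius eigenvalue $p^{2}$ on $\rho_{z_0}$, namely the eigenvalue of $\epsilon^{-2}$. By Remark~\ref{rem: eval conj}, the eigenvalues $p^{3-k}\alpha$ and $p^k\alpha^{-1}$ of $\rho_\mu$ and $p$ of $\epsilon^{-1}$ are distinct from $p^2$. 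Therefore $\dim\Dcris(M_{z_0}^{ss}(\kappa_1))^{\varphi=F_1(z_0)}=1$, and the Kisin property yields $\dim\Dcris(M_{z_0}(\kappa_1))^{\varphi=F_1(z_0)}=1$.

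Next we transport this period to $U$. Since the kernel of $M_{z_0}\to U$ has no $\epsilon^{-2}$ constituent, the eigenvalue $p^2$ does not occur in its $\Dcris$. Left exactness of $\Dcris$ together with an eigenvalue count therefore shows that the nonzero $\varphi=F_1(z_0)$ line maps injectively into $\Dcris(U(\kappa_1))$. So $\Dcris(U)^{\varphi=p^2}\neq0$, which means the surjection $U\to\epsilon^{-2}$ is surjective on the $p^2$-eigenspace of $\Dcris$. Combined with crystallinity of $\rho$ (the representation $\rho_\mu$ is crystalline since $p\nmid\mathrm{cond}(\mu)$, and $\epsilon^{-1}$ is crystalline), this gives $\dim\Dcris(U)=\dim\Dcris(\rho)+1=\dim U$. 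Hence $U$ is crystalline at $p$.

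The main obstacle is the construction of $M$ with $U$ as a quotient of $M_{z_0}$ and no further $\epsilon^{-2}$ in the kernel, while keeping $M$ torsion-free and finite over $A$. The wrong choice could place $\epsilon^{-2}$ as a sub rather than a quotient, and the eigenspace argument would then be vacuous. We plan to take $M=R\,e_{\epsilon^{-2}}$ (the column attached to $\epsilon^{-2}$), possibly modified by $\mathfrak{m}$-saturation as in \cite[Lem.~8.2.14]{BC09}. Its reduction has $\epsilon^{-2}$ as its cosocle, and every extension of $\epsilon^{-2}$ by $\rho$ lying in $\Ext_T$ appears as a quotient via the identification $\Ext_T(i,j)\simeq\Hom(\mathcal{A}_{i,j}/\ldots,k)$ of \cite[Thm.~1.5.5]{BC09}.
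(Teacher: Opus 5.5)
Your overall route matches the paper's: apply the Kisin property at $z_0$ to a realisation, push the resulting $\varphi=p^2$ line into $\Dcris(U|_{G_p})$ by left exactness, and finish with a dimension count. Your eigenvalue bookkeeping and the final count are correct. The gap is in the module $M$ on which everything rests.

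The Kisin property (Definition~\ref{def:KP}) applies only to a \emph{realisation} of $T$. That means a torsion-free finite $A$-module $M$ such that $M\otimes_A K$ has trace $T$, so in particular $\dim_K(M\otimes_A K)=4$. Neither reference you cite supplies such an $M$ surjecting onto $U$. \cite[Thm.~1.5.6]{BC09} only exhibits $U$ as a quotient of $M_j/\mathfrak{m}M_j\oplus\rho$ for the column module $M_j$. \cite[Lem.~8.2.7]{BC09} is about continuity of the extensions.

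Your concrete choice $M=Re_{\epsilon^{-2}}$ is in general \emph{not} a realisation. Write $R=(\mathcal{A}_{i,j})$ in block form with respect to the adapted representation $R\otimes_A K\to M_4(K)$. Then $Re_{\epsilon^{-2}}\otimes_A K$ is the $\epsilon^{-2}$-column $(\mathcal{A}_{\rho_\mu,\epsilon^{-2}}K)^2\oplus K\oplus \mathcal{A}_{\epsilon^{-1},\epsilon^{-2}}K$. This is all of $K^4$ only if both off-diagonal entries are nonzero.

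Nothing proved so far guarantees that. For instance, $\mathcal{A}_{\rho_\mu,\epsilon^{-2}}=0$ is compatible with $T_\eta$ not being $(2,1,1)$-reducible (Proposition~\ref{prop: T-not-(2,1,1)}) if only the $\epsilon^{-2}$ and $\epsilon^{-1}$ blocks interact. This is exactly the configuration in which the case $\Ext_T(\epsilon^{-2},\epsilon^{-1})\neq 0$ of Theorem~\ref{thm: X is a point} has to be handled. In that case $Re_{\epsilon^{-2}}$ spans a $K$-space of dimension at most $2$, its trace is not $T$, and the Kisin property says nothing about it. Saturating by $\mathfrak{m}$ does not change the $K$-span, so it cannot repair this.

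Relatedly, the obstacle you flag is not the real one. That $\epsilon^{-2}$ is the cosocle is automatic, because $M_j/\mathfrak{m}M_j$ is cyclic, generated by $e_{\epsilon^{-2}}$. The real issue is the $K$-rank.

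The missing ingredient is \cite[Lem.~4.3.9]{BC09}. It gives an $R$-stable finite submodule $N\subset K^4$ such that $M:=N\oplus M_j$ spans $K^4$, and is therefore a realisation of $T$. Moreover, $(N\otimes_A k)^{\mathrm{ss}}$ consists only of copies of $\rho_\mu$ and $\epsilon^{-1}$, so $\Dcris(N\otimes_A k)^{\varphi=p^2}=0$. Since $M_{z_0}=(N\otimes_A k)\oplus(M_j\otimes_A k)$ is a \emph{direct sum}, the line produced by the Kisin property must lie in $\Dcris(M_j/\mathfrak{m}M_j)^{\varphi=p^2}$.

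From there your argument goes through unchanged. A nonsplit $U$ is a quotient of $M_j/\mathfrak{m}M_j$ with kernel free of $\epsilon^{-2}$, so the $p^2$-line injects into $\Dcris(U|_{G_p})$. Your dimension count then gives crystallinity.
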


\begin{proof}
Let $G_p$ be a decomposition group at $p$ and write $\varphi$ for the crystalline Frobenius acting on $\Dcris(-)$. Let $U\in\Ext_T(\epsilon^{-2},\rho)$. By \cite[Thm.~1.5.6]{BC09}, $U$ is a quotient of $M_j/ \mathfrak{m} M_j\oplus\rho$ by a submodule $W$, where  $M_j = S E_j$ is the column $S$-submodule defined in \cite[\S1.5.4]{BC09} (and is a left ideal of $R \subset M_4(K)$) with $\rho_j = \epsilon^{-2}$, and every simple subquotient of $W$ is isomorphic to $\rho_\mu$ or $\epsilon^{-1}$. To show that $U$ is crystalline at $p$, it suffices to show that $\Dcris(U|_{G_p})^{\varphi=p^2}\neq 0$: by left exactness of $\Dcris(-)^{\varphi=*}$ and since $\rho\subset U$ is crystalline with $\varphi$-eigenvalue(s) distinct from $p$ and $p^2$ (for $\rho_\mu$ this follows since $\mu_p$ is an irreducible unramified principal series representation and $\det(\rho_\mu) = \epsilon^{-3}$), in this case
\[
\dim(\Dcris(U|_{G_p})) = \dim(\Dcris(\rho|_{G_p})+\dim(\Dcris(U|_{G_p}))^{\varphi=p^2}.
\]
Since $p^2$ does not occur as a $\varphi$-eigenvalue of $\epsilon^{-1}$ or $\rho_\mu$, it cannot occur as a $\varphi$-eigenvalue in the kernel of the surjection
\[
  (M_j/\mathfrak{m} M_j\oplus\rho)\longrightarrow
  U=(M_j/\mathfrak{m} M_j\oplus\rho)/W.
\]
Thus any $\varphi=p^2$ eigenspace in $\Dcris(U|_{G_p})$ must come from the $M_j/\mathfrak{m} M_j$-summand, and it suffices to show that $\Dcris((M_j/\mathfrak{m} M_j)|_{G_p})^{\varphi=p^2}\neq 0$. By \cite[Lem.~4.3.9]{BC09}, there exists an $R$-module $N\subset K^4$ such that $M:=N\oplus M_j$ has $K$-span $K^4$ and is a realisation of $T$, and $(N\otimes_A k)^{\mathrm{ss}}$ is a sum of copies of $\rho_\mu$ and $\epsilon^{-1}$. Since $z_0$ is $\psi_2$- or $\psi_3$-refined, we have $F_1(z_0)p^{\kappa_1(z_0)}=p^2$. Moreover, $(M_{z_0})^{\mathrm{ss}}\simeq \rho_{z_0}=\epsilon^{-1}\oplus\epsilon^{-2}\oplus\rho_\mu$, so
\[
  \dim\Dcris\bigl((M_{z_0})^{\mathrm{ss}}\vert_{G_p}\bigr)^{\varphi=p^2}=1.
\]
Hence $\dim \Dcris\!\left(M_{z_0}|_{G_p}\right)^{\varphi=p^2}=1$, by the Kisin property at $z_0$. Since the $\varphi=p^2$ eigenspace of $M_{z_0}|_{G_p}$ cannot lie in the kernel of the surjection $M_{z_0}\twoheadrightarrow M_j/\mathfrak{m} M_j$, it survives in the quotient. Hence $\Dcris((M_j/\mathfrak{m} M_j)|_{G_p})^{\varphi=p^2}\neq 0$, as required.
\end{proof}

\begin{lemma}
\label{lem: ext vanishes}
$\Ext_T(\epsilon^{-2},\rho_\mu)=0$.
\end{lemma}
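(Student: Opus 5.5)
The plan is to map $\Ext_T(\epsilon^{-2},\rho_\mu)$ into a Bloch--Kato Selmer group that vanishes by Lemma~\ref{lem:Selmer-vanishing}.

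\textbf{Step 1: embed into Galois cohomology.} By \cite[Lem.~8.2.7]{BC09}, there is a $k$-linear injection
\[
\Ext_T(\epsilon^{-2},\rho_\mu)\hookrightarrow \Ext^1_{k[G_{\Q,S}],\mathrm{cont}}(\epsilon^{-2},\rho_\mu).
\]
Twisting by $\epsilon^{2}$ identifies the target with
\[
H^1(G_{\Q,S},\rho_\mu(2)),
\]
with $k$-coefficients (after extending scalars to the field of definition $L$ of $\rho_\mu$ if necessary). So it suffices to show that the image of any $U\in \Ext_T(\epsilon^{-2},\rho_\mu)$, twisted by $\epsilon^2$, satisfies the local conditions defining $H^1_f(\Q,\rho_\mu(2))$.

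\textbf{Step 2: local conditions away from $p$.} Fix $\ell\in S\setminus\{p\}$. By Proposition~\ref{prop:trivial-mon}, the restriction $U|_{I_\ell}$ is split, so the class restricts to zero in $H^1(I_\ell,\rho_\mu(2))$. Hence it lies in $H^1_{\mathrm{ur}}(\Q_\ell,\rho_\mu(2))=H^1_f(\Q_\ell,\rho_\mu(2))$.

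A subtlety is that the class must lie in the kernel of restriction to $I_\ell$, not merely be split as a representation of $I_\ell$. These agree, since a split extension of $I_\ell$-modules is exactly a coboundary. For $\ell\in S_0$, Lemma~\ref{lem:no-epsd} together with (St) gives the stronger statement that the local extension group $\Ext^1_{L[G_{\Q_\ell}]}(\epsilon^{-2},\rho_\mu|_{G_{\Q_\ell}})$ vanishes, which suffices. For $\ell\notin S$ the class is unramified by construction.

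\textbf{Step 3: local condition at $p$.} By Proposition~\ref{prop: nontriv subquots}, $U|_{G_{\Q_p}}$ is crystalline. Twisting by a crystalline character preserves crystallinity, so $U(2)|_{G_{\Q_p}}$ is crystalline. A class in $H^1(\Q_p,V)$ lies in $H^1_f(\Q_p,V)$ precisely when the corresponding extension is crystalline. Hence the class lies in $H^1_f(\Q_p,\rho_\mu(2))$.

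\textbf{Step 4: conclude.} Steps 1--3 show that the twisted image of $\Ext_T(\epsilon^{-2},\rho_\mu)$ lies in $H^1_f(\Q,\rho_\mu(2))\otimes k$. This group vanishes by Lemma~\ref{lem:Selmer-vanishing}, which uses the noncuspidality hypothesis $L(1/2,\mu)\neq 0$ and Kato's theorem. Since the map in Step 1 is injective, $\Ext_T(\epsilon^{-2},\rho_\mu)=0$.

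The main care needed is bookkeeping of coefficient fields. $k$ is the residue field of $\mathcal{O}_{z_0}$, while $\rho_\mu$ is defined over $L$, so one should base change to a common finite extension. Bloch--Kato Selmer groups commute with such flat base change, so vanishing is preserved. I expect no genuine obstacle beyond this, since the hard inputs (crystallinity via the Kisin property, and local splitting via (SK--P2) and (St)) are already Propositions~\ref{prop: nontriv subquots} and \ref{prop:trivial-mon}.
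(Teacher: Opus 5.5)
Your proposal is correct and follows essentially the same route as the paper's proof. Both arguments view $U\in\Ext_T(\epsilon^{-2},\rho_\mu)$ as a continuous Galois extension (injectively, via \cite[Lem.~8.2.7]{BC09}), use Propositions~\ref{prop:trivial-mon} and~\ref{prop: nontriv subquots} to make $U(2)$ unramified away from $p$ and crystalline at $p$, and conclude from the vanishing of $H^1_f(\Q,\rho_\mu(2))$ in Lemma~\ref{lem:Selmer-vanishing}; your remarks on splitting over $I_\ell$ versus vanishing in $H^1(I_\ell,-)$ and on coefficient fields only make explicit what the paper leaves implicit.
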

\begin{proof}
Suppose that $U \in \Ext_T(\epsilon^{-2},\rho_\mu)$ is nontrivial. By Propositions~\ref{prop:trivial-mon} and \ref{prop: nontriv subquots}, $U$ is unramified outside $p$ and crystalline at $p$. The twist $U(2)$ of $U$ by the unramified/crystalline character $\epsilon^2$ defines a nontrivial class in $H_f^1(\Q,\rho_\mu(2))$, but this Selmer group vanishes by Lemma~\ref{lem:Selmer-vanishing}.
\end{proof}

\begin{lemma}
\label{lem: ext nonzero}
If $X(1)$ is not a point, then $\Ext_T(\epsilon^{-2},\epsilon^{-1})\neq 0$.
\end{lemma}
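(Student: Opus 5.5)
We argue by contraposition: assuming $\Ext_T(\epsilon^{-2},\epsilon^{-1})=0$, we show that $T_\eta$ is $(2,1,1)$-reducible, which contradicts Proposition~\ref{prop: T-not-(2,1,1)} when $\dim X>0$. The tool is the theory of reducibility ideals for the residually multiplicity-free pseudocharacter $T\colon G_{\Q,S}\to A=\mathcal{O}_{z_0}$, whose residual constituents are $\mathcal{I}=\{\rho_\mu,\epsilon^{-2},\epsilon^{-1}\}$. Write $\mathcal{I}_{\mathcal{P}}$ for the total reducibility ideal attached to the finest partition $\mathcal{P}=\{\{\rho_\mu\},\{\epsilon^{-2}\},\{\epsilon^{-1}\}\}$. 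In the GMA $R$ with the chosen idempotents, write $R=(A_{i,j})$ as in \cite[\S1.3]{BC09}; then $\mathcal{I}_{\mathcal{P}}=\sum_{i\neq j}A_{i,j}A_{j,i}$ by \cite[Prop.~1.5.1]{BC09}. By \cite[Thm.~1.5.5]{BC09}, each $\Ext_T(j,i)$ is dual to $A_{i,j}/A'_{i,j}$ with $A'_{i,j}=\sum_{k\neq i,j}A_{i,k}A_{k,j}$, up to the usual $\mathfrak{m}A_{i,j}$ term.

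First, we use $\Ext_T(\epsilon^{-2},\rho_\mu)=0$ from Lemma~\ref{lem: ext vanishes}. We also need the transposed groups. By Lemma~\ref{lem: pseudo involution} and Remark~\ref{rem: tau-basechange}, $T=T^\tau$ with $\tau$ the involution $\rho\mapsto\rho^\vee(-3)$. This involution fixes $\rho_\mu$, since $\rho_\mu$ is $\epsilon^{-3}$-self-dual, and swaps $\epsilon^{-1}\leftrightarrow\epsilon^{-2}$. Via the anti-involution of $R$ induced by $\tau$ (as in \cite[\S1.8]{BC09}), this gives $\Ext_T(i,j)\simeq\Ext_T(j^\tau,i^\tau)$. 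Hence $\Ext_T(\rho_\mu,\epsilon^{-1})=0$, and by hypothesis $\Ext_T(\epsilon^{-1},\epsilon^{-2})\simeq\Ext_T(\epsilon^{-2},\epsilon^{-1})=0$.

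Next comes a Nakayama-style argument on the off-diagonal pieces. Vanishing of $\Ext_T(\epsilon^{-2},\rho_\mu)$ gives $A_{\rho_\mu,\epsilon^{-2}}=A_{\rho_\mu,\epsilon^{-1}}A_{\epsilon^{-1},\epsilon^{-2}}$ modulo $\mathfrak{m}$. Vanishing of $\Ext_T(\epsilon^{-2},\epsilon^{-1})$ gives $A_{\epsilon^{-1},\epsilon^{-2}}\subset A_{\epsilon^{-1},\rho_\mu}A_{\rho_\mu,\epsilon^{-2}}+\mathfrak{m}A_{\epsilon^{-1},\epsilon^{-2}}$. The same two relations hold with the roles of $\epsilon^{-1}$ and $\epsilon^{-2}$ transposed. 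Combining these, $\epsilon^{-2}$ becomes isolated: every product $A_{\epsilon^{-2},j}A_{j,\epsilon^{-2}}$ for $j\neq\epsilon^{-2}$ lies in $\mathfrak{m}$ times the same ideal. By finiteness of $R$ over $A$ and Nakayama, that ideal is zero. The conclusion is that the reducibility ideal $\mathcal{I}_{\mathcal{Q}}$ for the partition $\mathcal{Q}=\{\{\rho_\mu,\epsilon^{-1}\},\{\epsilon^{-2}\}\}$ vanishes, so $T$ over $A$ is a sum of a $3$-dimensional and a $1$-dimensional pseudocharacter. Applying the same argument to $\epsilon^{-1}$, or applying $\tau$ to this decomposition, also splits off $\epsilon^{-1}$, and then $T=T_2+\chi_1+\chi_2$ over $A$. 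Since $A\to\mathrm{Frac}(A)$ is injective and $\mathrm{Frac}(\mathcal{O}(X))\hookrightarrow\mathrm{Frac}(A)$, Lemma~\ref{lem:JD} shows that $T_\eta$ is $(2,1,1)$-reducible. This contradicts Proposition~\ref{prop: T-not-(2,1,1)}, so $\Ext_T(\epsilon^{-2},\epsilon^{-1})\neq0$.

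The main obstacle is the bookkeeping in the Nakayama step. The $\Ext$ groups only control the $A_{i,j}$ modulo products through the third index, so the relations from the three vanishing $\Ext$ groups must be made to close up. The plan is to imitate the argument of \cite[\S8.2]{BC09}, which first uses $\Ext_T(\epsilon^{-2},\rho_\mu)=0$ and its $\tau$-dual to show $A_{\rho_\mu,\epsilon^{-2}}A_{\epsilon^{-2},\rho_\mu}\subset \mathcal{I}$, where $\mathcal{I}$ denotes the ideal generated by $A_{\epsilon^{-1},\epsilon^{-2}}A_{\epsilon^{-2},\epsilon^{-1}}$. It then deduces $\mathcal{I}\subset\mathfrak{m}\mathcal{I}$ from the vanishing of $\Ext_T(\epsilon^{-2},\epsilon^{-1})$. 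If this direct route fails, the fallback is \cite[Cor.~1.5.7]{BC09}, which states that $\mathcal{I}_{\mathcal{Q}}=0$ once all $\Ext_T$ groups between the two blocks of $\mathcal{Q}$ vanish in both directions.
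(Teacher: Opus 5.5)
Your overall route is the paper's. You assume $\Ext_T(\epsilon^{-2},\epsilon^{-1})=0$, combine it with Lemma~\ref{lem: ext vanishes} and the $\tau$-symmetry to kill the total reducibility ideal $\mathcal{I}_{\mathcal{P}}$, and contradict Proposition~\ref{prop: T-not-(2,1,1)}. The paper does the middle step in one line, by twisting by $\epsilon^2$ and quoting \cite[Lem.~8.3.2]{BC09}.

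However, one step is wrong as written. From $\Ext_T(i,j)\simeq\Ext_T(j^\tau,i^\tau)$ you conclude $\Ext_T(\epsilon^{-1},\epsilon^{-2})\simeq\Ext_T(\epsilon^{-2},\epsilon^{-1})$. But with $i=\epsilon^{-2}$, $j=\epsilon^{-1}$ you get $j^\tau=\epsilon^{-2}$ and $i^\tau=\epsilon^{-1}$. So $\tau$ sends $\Ext_T(\epsilon^{-2},\epsilon^{-1})$ to itself: dualising $0\to\epsilon^{-1}\to U\to\epsilon^{-2}\to 0$ and twisting by $\epsilon^{-3}$ gives another extension of $\epsilon^{-2}$ by $\epsilon^{-1}$. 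Nothing controls $\Ext_T(\epsilon^{-1},\epsilon^{-2})$, which after twisting is a subspace of $H^1(G_{\Q,S},\overline{\Q}_p(-1))$, nor $\Ext_T(\epsilon^{-1},\rho_\mu)$. Consequently three of your steps are unavailable: the ``same two relations with the roles of $\epsilon^{-1}$ and $\epsilon^{-2}$ transposed'', the option of ``applying the same argument to $\epsilon^{-1}$'', and your fallback in its two-sided form.

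The fix is that you never needed them. Index $1=\rho_\mu$, $2=\epsilon^{-2}$, $3=\epsilon^{-1}$.
\begin{itemize}
\item The two relations you did write are $A_{1,2}=A_{1,3}A_{3,2}$ and $A_{3,2}=A_{3,1}A_{1,2}$; they become exact by Nakayama, since the $A_{i,j}$ are of finite type.
\item Substituting gives $A_{3,2}=(A_{3,1}A_{1,3})A_{3,2}\subset\mathfrak{m}A_{3,2}$. Hence $A_{3,2}=0$, and then $A_{1,2}=0$. So $\epsilon^{-2}$ splits off using only these two one-sided vanishings.
\item The correctly dualised vanishing $\Ext_T(\rho_\mu,\epsilon^{-1})=0$ gives $A_{3,1}=A_{3,2}A_{2,1}=0$. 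Equivalently, $A_{3,1}$ is the $\tau$-image of $A_{1,2}$.
\end{itemize}
Hence $A_{i,j}A_{j,i}=0$ for all $i\neq j$, so $\mathcal{I}_{\mathcal{P}}=0$, which is exactly what \cite[Lem.~8.3.2]{BC09} provides. The route in your last paragraph ($\mathcal{I}_{\mathcal{P}}\subset\mathcal{I}\subset\mathfrak{m}\mathcal{I}$) also uses only these three vanishings and is fine. With this correction, the rest of your argument, including the passage to $T_\eta$ via Lemma~\ref{lem:JD}, goes through as in the paper.
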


\begin{proof}
Suppose that $\Ext_T(\epsilon^{-2},\epsilon^{-1})=0$. Twisting by $\epsilon^2$, $\Ext_{T(2)}(1,\epsilon)=\Ext_{T(2)}(1,\rho_\mu(2))=0$ by assumption and Lemma~\ref{lem: ext vanishes}. By Lemma~\ref{lem: pseudo involution} and Remark~\ref{rem: tau-basechange}, the induced $\tau$-involution on the constituents $\{ \rho_\mu(2), 1, \epsilon \}$ of $T(2)$ satisfies the hypotheses of \cite[\S8.2.6]{BC09}. Applying \cite[Lem.~8.3.2]{BC09} to $T(2)$ with $j=1$, $\rho_1=\rho_\mu(2)$ and $\mathcal{P}=\{\{1\},\{2\},\{3\}\}$, the two vanishing conditions imply that the total reducibility ideal $\mathcal{I}_{\mathcal{P}}(T)$ is zero. By \cite[Prop.~1.5.1]{BC09}, this is equivalent to $T$ decomposing as a sum of pseudocharacters $T_1, T_2, T_3$ of respective dimensions $2,1,1$, and therefore $T_{\eta,z_0}:G_{\Q,S}\to \mathrm{Frac} (\mathcal{O}_{z_0})$ is $(2,1,1)$-reducible. If $X(1)$ is positive-dimensional, this contradicts Proposition~\ref{prop: T-not-(2,1,1)} (cf. the proof of Proposition~\ref{prop: T_eta to T_z}).
\end{proof}

\begin{proof}[Proof of Theorem~\ref{thm: X is a point}]
If $X(1)$ is not a point, then by Lemma~\ref{lem: ext nonzero} there exists a nontrivial $U\in\Ext_T(\epsilon^{-2},\epsilon^{-1})$. By Propositions~\ref{prop:trivial-mon} and~\ref{prop: nontriv subquots}, $U$ is unramified outside $p$ and crystalline at $p$. Thus the twist $U(2)$ of $U$ by the unramified/crystalline character $\epsilon^2$ defines a nontrivial class in $H_f^1(\Q, \overline{\Q}_p(1))$, but this Selmer group vanishes by Lemma~\ref{lem:Selmer-vanishing}. Therefore $X(1)$ is a point.
\end{proof}

\begin{rem}
\label{rem:genericity}
The noncuspidality of $z_0$ is not, a priori, a necessary condition for complete rigidity. Schmidt's construction \cite[Thm.~3.1]{Sch05} produces cuspidal $\GSp_4(\A_\Q)$ Saito--Kurokawa representations with $\epsilon(1/2,\mu)=1$ by taking $S_{\mathrm{Sch}} \not \ni p$ of nonzero even cardinality. Hence one may choose $\mu$ so that $L(1/2,\mu)\neq 0$, for which Kato's theorem still applies. However, for each $\ell\in S_{\mathrm{Sch}} \neq \emptyset$, the local factor $\pi_\ell$ is \emph{generic}; in particular $\pi_\ell$ is generic for some finite prime $\ell \neq p$, and $N_{\pi, \ell} \not \sim N_\ell(\rho_{z_0})$ (cf.~the classification in of SK (type~\textbf{(P)}) local factors  \cite[Table~2]{Sch20}). Replacing $N_{\pi, \ell}$ by $N_\ell(\rho_{z_0})$ in \textup{(SK--P2)}, the same method shows that the corresponding  SK point $z_0$ does not deform in a $p$-adic family as in Theorem~\ref{thm: X is a point}, i.e. the $p$-adic families (that we expect to exist) don't satisfy \textup{(SK--P2)} for the smaller Galois monodromy operator. Instead, we expect the monodromy will be of type $N_{\pi, \ell}$. 
\end{rem}

\begin{rem} \label{rem:T-irr-lattices-short}
If $T:G_{\Q,S} \to \mathcal{O}_{z_0}$ is residually multiplicity-free and $T_{\eta, z_0}: G_{\Q, S} \to \mathrm{Frac} (\mathcal{O}_{z_0})$ is irreducible (so that $\rho_K$ is absolutely irreducible, where $K=\mathrm{Frac}(\mathcal{O}_{z_0})$), then the argument above admits a reformulation in terms of lattices and Ribet's lemma as in \cite{BC04}. 
\end{rem}

\printbibliography
\end{document}